\documentclass[a4paper,11pt]{article}

\usepackage{amssymb, amstext, amsmath, amsthm, latexsym, textcomp}

\newtheorem{defi}{Definition}[section]
\newtheorem{teor}[defi]{Theorem}
\newtheorem{lemma}[defi]{Lemma}
\newtheorem{prop}[defi]{Proposition}
\newtheorem{rem}[defi]{Remark}

\newtheorem{co}[defi]{Corollary}

\DeclareMathOperator{\ad}{ad}

\DeclareMathOperator{\Ad}{Ad}

\DeclareMathOperator{\supp}{supp}
\DeclareMathOperator{\prr}{Rad}

\DeclareMathOperator{\Ch}{char}

\DeclareMathOperator{\Spec}{Spec}

\DeclareMathOperator{\Id}{Id}

\DeclareMathOperator{\Ker}{Ker}
\DeclareMathOperator{\Hom}{Hom}
\DeclareMathOperator{\End}{End}

\DeclareMathOperator{\Der}{Der}

\DeclareMathOperator{\Sym}{Sym}

\DeclareMathOperator{\CM}{CM}
\DeclareMathOperator{\Ann}{Ann}

\DeclareMathOperator{\TKK}{TKK}

\DeclareMathOperator{\PDer}{PDer}

\DeclareMathOperator{\Jr}{J}

\begin{document}

\date{}

\title{Orthogonal completion of algebraic systems} 

\author{A.~Yu.~Golubkov}

\maketitle

\begin{abstract} 
The paper considers the generalization of the construction of orthogonal 
completion to all semiprime linear algebras (systems), its implementation 
in maximal systems of quotients similar to the symmetric Martindale ring 
of quotients for known examples of their description in terms of algebras 
of quotients of graded Lie algebras and technical results for 
applying the orthogonal completion method to them.
\end{abstract}

\section{Introduction}

The construction of the orthogonal completion of semiprime associative rings 
and the method of transferring statements in terms of Horn formulas from the 
prime to the semiprime case are described in detail in \cite{B6, BMich, BMM}, 
its topological interpretation is in \cite{Har2}, and a detailed background to 
the question of its origin is in \cite{BMich}. In this paper, we obtain a 
generalization of the orthogonal completion for all semiprime linear algebras 
and the necessary results for applying the conclusions of \cite{BMich}, 
$\S$ 5, 6 similarly $\S$ 8 both in this case and in the framework of the 
implementation of the orthogonal completion in maximal algebras, triple systems 
and pairs of quotients similar to Martindale ones for homogeneously semiprime 
graded Lie algebras, non-degenerate linear Jordan algebras, triple Jordan 
systems and pairs, $R$--semiprime triple Lie systems (maximal right algebras of 
quotients for semiprime alternative algebras). 

Taking this opportunity, I would like to express my gratitude to Professor 
A.~V.~Mikhalev for valuable motivating discussions on writing the paper and 
to Professor E.~Garc{\'\i}a for the presented material on systems of 
$\mathfrak{M}$--quotients of triple Lie systems.

Everywhere below, unless additional conditions are imposed, $F$ is any 
associative commutative ring with 1, all $F$--modules are unitary, 
both left and right with identical action of $F$ and the action of their 
endomorphisms will be written on the right, all algebras, triple systems 
and pairs over $F$ are linear, classes of $F$--algebras contain the zero 
algebra and are closed under taking isomorphic copies.

For any $F$--algebra $R$ and set ${B\subseteq R}$, $F B$, $\langle B\rangle$ 
and $(B)_R$ are the $F$--submodule, subalgebra and ideal of $R$ generated by 
$B$, $\End_F(R)$ is the $F$--algebra of endomorphisms of the 
$F$--module $R$, $\Der(R)$ ($\Der_F(R)$ if necessary indicate $F$) is the Lie 
algebra of $F$--derivations of $R$ (a subalgebra of the Lie algebra 
$\End_F(R)^{(-)}$ obtained from $\End_F(R)$ by replacing multiplication with 
the ring commutator), ${M^R(B)=\langle t_x\mid t=l, r,\ x\in B\rangle}$ and, 
in particular, ${M(R)=M^R(R)}$ (${M(R)'=F \Id_R+M(R)}$) is the \emph{algebra of 
multiplications} (\emph{with unity}) of $R$, $Z(R)$ is the center of $R$, 
\begin{multline*}
Z(R)\ =\ \{x\in R\mid (x, R, R)=(R, x, R)=(R, R, x)=[x, R]=\{0\}\}\ =\ 
\\
\{x\in R\mid l_x=r_x\in Z(M(R))\}\,,
\end{multline*}
${\Ann_t B=\{x\in R\mid (B)t_x=\{0\}\}, t=l, r}$, where 
${l_x: y\longmapsto x y}$ and ${r_x: y\longmapsto y x, x, y\in R}$, are the 
operators of left and right multiplication by $x$, $\Id_M$ is the identical 
isomorphism of the module $M$, ${(x, y, z)=(x y) z-x (y z)}$ and 
${[x, y]=x y-y x}$ are the associator and ring commutator of ${x, y, z\in R}$. 
If $R$ is a Lie algebra, then ${[\ ,\ ]}$ is the multiplication operation of 
$R$, ${\ad_x=r_x=-l_x}$, ${x\in R}$, ${\Ad(R)=M(R)}$, ${\Ad(R)'=M(R)'}$, 
${\ad(R)=\{\ad_x\mid x\in R\}}$ is the ideal of $\Der(R)$ of inner derivations 
of $R$, ${\ad: x\longmapsto \ad_x, x\in R}$, is the canonical epimorphism of 
$R$ onto $\ad(R)$, ${\Ker \ad=\Ann_r R=\Ann_l R}$ (the center of the Lie 
algebra $R$; the choice of ${\ad=r}$ is related to the right action of 
$\End_F(R)$ on $R$ in the paper).  

An algebra $R$ is \emph{prime} (\emph{semiprime}) if ${I J\ne \{0\}}$ 
(${I^2\ne \{0\}}$) for all ${\{0\}\ne I, J\lhd R}$. The primeness of $R$ is 
equivalent to its semiprimeness and ${I\cap J\ne \{0\}}$ for all  
${\{0\}\ne I, J\lhd R}$. An ideal ${P\lhd R}$ is a \emph{prime ideal of $R$} 
if $R/P$ is prime, $\Spec(R)$ is the set of all prime ${P\lhd R}$, 
${\prr(R)=\bigcap\limits_{P\in \Spec(R)} P}$ is the \emph{prime radical of $R$}. 

An algebra (module, system) without $k$--torsion, ${k\geq 1}$, in an 
additive group (${k x\ne 0}$ for all ${x\ne 0}$) will be called an 
\emph{algebra} (\emph{module, system}) \emph{without $k$--torsion}.

The Martindale centroid $\CM(R)$, the extended centroid ${}_R C$ and the central 
closures $P(R)$ and ${}_R Q$ of the semiprime algebra $R$, 
${\CM(R)\cong {}_R C, P(R)\cong {}_R Q}$, will be considered in the second part. 
When speaking about right modules over an associative ring (algebra) $A$, we will 
use standard notations: ${\Ann_A L=\{x\in A\mid L x=\{0\}\}}$ is the annihilator 
of the set ${L\subseteq M_A}$, ${\Hom(M, N)_A}$ is the group of homomorphisms of 
the $A$--modules $M_A$ and $N_A$, $\End(M)_A$ is the ring of endomorphisms of 
$M_A$. A submodule of $M_A$ that has non-zero intersections with all\linebreak non-zero 
submodules of $M_A$ is called \emph{essential} (\emph{large}), a submodule of 
$M_A$ that is invariant under the action of $\End(M)_A$ is called 
\emph{completely invariant}. The set of all essential ideals ($M(R)'$--submodules) 
of the algebra (system, pair) $R$ will be denoted by $\mathcal{E}(R)$. Following 
\cite{Ut, Lamb, FeL}, we present a number of constructions of rings of quotients 
of associative rings. 

\smallskip
An associative ring $A$ is called \emph{left exact} if ${{}_A A}$ is an exact 
$A$--module, ${\Ann_l A=\{0\}}$. An associative ring $B$ is a \emph{right ring 
of quotients} of a ring $A$ if $A$ is a subring of $B$, 
${x (y^{-1} A)\ne \{0\}}$ for any ${x, y\in B, x\ne 0}$, 
${y^{-1} A=\{a\in A\mid y a\in A\}}$. A right ideal $I$ of a ring $A$ is called 
\emph{dense} if $A$ is a right ring of quotients of $I$. The latter is equivalent 
to the existence for any ${x, y\in A}$, ${x\ne 0}$, such a ${z\in A}$ that 
${x z\ne 0}$, ${y z\in I}$ \cite{Ut}, (1.6) (for left exact $A$ one can use 
the condition \cite{FeL}, Proposition 2, p. 58, with 
${z\in A^1=\mathbb{Z}\oplus A}$, since in this case there is ${a\in A}$, 
${x a\ne 0}$, and for ${z\in A^1, x a z\ne 0}$, ${y a z\in I, a z\in A}$). 

The left exactness of the ring $A$ is necessary and sufficient for the existence of 
a right ring of quotients for $A$ \cite{Ut}, (1.11). If $B$ is a right 
ring of quotients of $A$, $C$ is a subring of $B$, ${A\subseteq C}$, then 
$C$ and $B$ are right rings of quotients of $A$ and $C$, $A$ is an essential 
submodule of $C_A$ and $C$ inherits the semiprimeness (primeness) of $A$, 
${Z(B)=\{x\in B\mid [x, A]=\{0\}\}}$ \cite{Ut}, (1.3) and for $A$ with 1 
the last one is the unity of $B$ (${1 (1 b-b) A=(b 1-b) A=\{0\}}$). The left 
exact $A$ has a unique, up to an isomorphism identical on $A$, \emph{complete} 
(\emph{maximal}) \emph{right ring\linebreak of quotients} $Q(A)$, into which any right 
ring of quotients of $A$ is embedded identically on $A$ 
\cite{Ut}, (1.2), (1.11), Theorem 1.
If $\mathcal{D}_r(A)$ and $\mathcal{C}_{dr}(A)$ are sets of dense right ideals 
of $A$ and pairs ${(\phi, I), I\in \mathcal{D}_r(A), \phi\in \Hom(I, A)_A}$, 
then $Q(A)$ can be realized as a ring ${\mathcal{C}_{dr}(A)/\sim}$ with unity 
${[(\Id_A, A)]}$, constructed similarly to ${}_A C$ with 
${A\cong \{[(l_x, A)]\mid x\in A\}}$ (${x\longmapsto [(l_x, A)]}$). If $A$ with 
1, then ${Q(A)\cong \End_{\End(I(A_A))_A}(I(A_A))}$, $I(A_A)$ is the injective 
hull of $A_A$ \cite{Lamb}, Proposition 6, p. 162 (for $A$ without 1 we can 
go to ${Q(A)=Q(Q(A))}$ \cite{Ut}, (1.15)). It is known that $Q(A)$ is 
defined by the following axioms:
\begin{enumerate}

\item $A$ is a subring of $Q(A)$; 

\item for any ${q\in Q(A)}$ there exists ${I\in \mathcal{D}_r(A)}$, 
${q I\subseteq A}$;

\item if ${q\in Q(A)}$, ${q I=\{0\}}$ for some ${I\in \mathcal{D}_r(A)}$, 
then ${q=0}$;

\item for any ${I\in \mathcal{D}_r(A)}$, ${\phi\in \Hom(I, A)_A}$ there 
is ${q\in Q(A)}$, ${\phi=l_q|_I}$.

\end{enumerate}
Having selected the set ${\mathcal{D}_{ir}(A)=\{I\lhd A\mid I\in \mathcal{D}_r(A)\}=
\{I\lhd A\mid \Ann_A I=\{0\}\}}$ of ideals of $A$ with zero left annihilators 
(${\mathcal{D}_{ir}(A)=\mathcal{E}(A)}$ for semiprime $A$), we can write
\[
Z(Q(A))\ =\ \{q\in Q(A)\mid \exists I\in \mathcal{D}_{ir}(A),\ 
I\subseteq q^{-1} A,\ l_q|_I\in \Hom_A(I, A)_A\}\,. 
\]

The left exactness of $A$ is equivalent to any of the conditions: 
${\mathcal{D}_r(A)\ne \emptyset}$; 
${A\in \mathcal{D}_r(A)}$; ${\mathcal{D}_{ir}(A)\ne \emptyset}$; 
${A\in \mathcal{D}_{ir}(A)}$. 
If $B$ is a subring and a dense right $A$--submodule of $Q(A)$ (i.e. for any 
${x, y\in Q(A)}$, ${x\ne 0}$, there is ${a\in A, x a\ne 0, y a\in B}$), then 
$B$ is left exact and ${Q(B)=Q(A)}$.

The \emph{right Martindale ring of quotients} $Q^r_m(A)$ of the ring $A$ is 
defined by axioms 1--4 with $Q^r_m(A)$ and $\mathcal{D}_{ir}(A)$ instead 
of $Q(A)$ and $\mathcal{D}_r(A)$ uniquely, up to an isomorphism identical 
on $A$, and is realized as a subring 
${\mathcal{C}_{idr}(A)/\sim\subseteq \mathcal{C}_{dr}(A)/\sim\cong Q(A)}$ with 
\[
[(\phi, I)]+[(\psi, J)]\ =\ [(\phi+\psi, I\cap J)]\,,\quad 
[(\phi, I)] [(\psi, J)]\ =\ [(\phi \psi, J I)]
\]
${(\phi, I), (\psi, J)\in \mathcal{C}_{idr}(A)=\{(\phi, I)\mid 
I\in \mathcal{D}_{ir}(A),\ \phi\in \Hom(I, A)_A\}}$. To complement $Q(A)$ and 
$Q^r_m(A)$ in the list of constructions of right rings of quotients of the 
left exact $A$ one can add a subring 
\[
Q^s_m(A)\ =\ \{q\in Q^r_m(A)\mid \exists I\in \mathcal{D}_{ir}(A),\ 
q I+I q\subseteq A\}\subseteq Q^r_m(A)\,,
\]
which is called the \emph{symmetric Martindale ring of quotients of} $A$
\cite{Ut}, (1.7). The given realizations of $Q(A)$ and $Q^r_m(A)$ allow 
us to identify $Q^s_m(A)$ and $Q^r_m(A)$ with subrings of $Q(A)$, 
${A\subseteq Q^s_m(A)\subseteq Q^r_m(A)\subseteq Q(A)}$, with center 
${Z(Q^s_m(A))=Z(Q^r_m(A))=Z(Q(A))}$ and common unity. If $A$ is semiprime, 
then $Q^s_m(A)$ is uniquely determined, up to an isomorphism identical on $A$, 
by the axioms: 
\begin{enumerate}

\item $A$ is a subring of $Q^s_m(A)$; 

\item for any ${q\in Q^s_m(A)}$ there exists ${I\in \mathcal{E}(A)}$, 
${q I, I q\subseteq A}$; 

\item if ${q\in Q^s_m(A)}$, ${q I=\{0\}}$ or ${I q=\{0\}}$ (only  
${q I=\{0\}}$ (${I q=\{0\}}$)) for some ${I\in \mathcal{E}(A)}$, 
then ${q=0}$; 

\item for any ${I, J\in \mathcal{E}(A)}$, ${\phi\in \Hom(I, A)_A}$, 
${\psi\in \Hom_A(J, A)}$ such that ${y (\phi x)=(y \psi) x}$, ${x\in I}$, 
${y\in J}$, there is ${q\in Q^s_m(A), \phi=l_q|_I, \psi=r_q|_J}$

\end{enumerate}
\cite{Pas}, Propositions 1.4, 1.6, Lemma 1.5 and their proofs, 
\cite{BMM}, Propositions 2.2.3. If $A$ is a $F$--algebra, then  
${Q(A), Q^r_m(A), Q^s_m(A)}$ are $F$--algebras (for their implementations, 
the action of ${f\in F}$ is multiplication by ${[(f \Id_A, A)]}$) and 
can be defined for $A$ as $F$--algebra, since for the ring $A$ and any 
${I\in \mathcal{D}_r(A), \phi\in \Hom(I, A)_A}$ we have 
${F I\in \mathcal{D}_r(A)}$, $\phi$ can be extended to a $F$--homomorphism 
${\phi\in \Hom(F I, A)_A}$, ${(\phi, I)\sim (\phi, F I)}$ 
(from ${\sum\limits_i f_i x_i=0}$ for some ${f_i\in F, x_i\in I}$ it follows 
that ${\Bigl(\sum\limits_i f_i \phi x_i\Bigr) y=0}$ for all ${y\in A}$ and 
${\sum\limits_i f_i \phi x_i=0}$ (left exactness of $A$)). 
If $A$ is semiprime, then ${Z(Q^s_m(A))={}_A C}$ and there is an isomorphism 
of ${}_A C$--algebras ${{}_A C A\subseteq Q^s_m(A)}$ and ${}_A Q$ 
identical on $A$ \cite{BM1}, Lemma 3.1, \cite{BM}, Theorem 3.17. 

\section{Central closure and orthogonal completeness of semiprime 
algebra}

We will begin a brief discussion of the construction of the central closure 
of a semiprime algebra with the equivalence of the approaches to its 
construction from \cite{EMO, BM, BMM} and \cite{Raz, Wis}. The information 
from module theory is given for right unitary modules over one associative 
ring $A$ with 1. A module $I_A$ is called \emph{injective} if for any 
${\alpha\in \Hom(M, N)_A}$, ${\Ker \alpha=\{0\}}$, and 
${\beta\in \Hom(M, I)_A}$ there exists ${\gamma\in \Hom(N, I)_A}$, 
${\beta=\gamma \alpha}$. The injectivity of $I_A$ is equivalent to the 
impossibility of embedding $I_A$ as a proper essential submodule into 
$A$--module and the separability of the images of $I_A$ under 
imbeddings into $A$--modules as direct summands. 

Any module $M_A$ can be embedded in a suitable injective module $I_A$ and, 
moreover, in an injective $I_A$ such that the image $M_A$ in $I_A$ is an 
essential submodule of $I_A$. In the latter case, $I_A$ is called the 
\emph{injective hull of} $M_A$. Its choice is unique up to an isomorphism 
identical on $M_A$ (here and below $M_A$ is a submodule of its injective 
hulls). Further, $I(M_A)$ is any fixed\linebreak injective hull of $M_A$ and 
${P(M_A)=\End(I(M_A))_A M}$ is the smallest of the completely invariant 
submodules of $I(M_A)$ containing $M_A$. The Jacobson radical 
$\mathcal{J}(\End(I(M_A))_A)$ of the ring $\End(I(M_A))_A$ is the set of 
all ${\phi\in \End(I(M_A))_A}$, $\Ker \phi$ is an essential submodule of 
$I(M_A)$,\linebreak the ring $\End(I(M_A))_A/\mathcal{J}(\End(I(M_A))_A)$ is regular 
in the sense of von Neumann. A module\linebreak $Q_A$ is called \emph{quasi-injective} if 
the homomorphisms into $Q_A$ of its submodules can be continued to 
endomorphisms of $Q_A$. This is equivalent to complete invariance of $Q_A$ in 
$I(Q_A)$, ${Q_A=P(Q_A)}$. Each module $M_A$ is included in a quasi-injective 
submodule ${P(M_A)\subseteq I(M_A)}$, which is called its 
\emph{quasi-injective hull}. An isomorphim of injective hulls of $M_A$, 
identical\linebreak on $M_A$, induces an isomorphism of the quasi-injective hulls of 
$M_A$ contained in them. From the quasi-injectivity of $I(M_A)$ and the 
complete invariance of $P(M_A)$ in $I(M_A)$ it follows that 
${P(M_A)=\End(P(M_A))_A M, \alpha: \phi\longmapsto \phi|_{P(M_A)}, 
\phi\in \End(I(M_A))_A}$, is an epimorphism of $\End(I(M_A))_A$ onto 
$\End(P(M_A))_A$. Up to an isomorphism identical on $M_A$, the quasi-injec\-tive 
hull of $M_A$ is any quasi-injective module $Q_A$ such that $M_A$ is an 
essential submodule of $Q_A$ and ${Q=\End(Q_A)_A M}$. It can also be defined as 
a \emph{minimal quasi-injective extension of $M_A$}, i.e. a quasi-injective 
module ${Q_A\supseteq M_A}$ such that embeddings of $M_A$ into quasi-injective 
$A$--modules extend to embeddings of $Q_A$ into them \cite{Lamb}, Ch. 4, 
p. 146--153, 168, \cite{Fe}, Vol. 1, Ch. 3, p. 218--220, Vol. 2, Ch. 19, 
p. 103--107. The simplest example of the difference between the conditions of 
injectivity and quasi-injectivity is the cyclic groups of orders $p^n$, 
$p$ is prime and ${n\geq 1}$, the common injective hull of these quasi-injective 
$\mathbb{Z}$--modules is the quasi-cyclic group of exponent $p$.  

Let $R$ be a non-zero semiprime $F$--algebra, ${R_{M(R)'}={}_F R_{M(R)'}}$ be 
$R$ as a right unitary module over the $F$--algebra $M(R)'$. We will consider 
all unitary modules over the ring $M(R)'$ and, in particular, 
${I(R)=I(R_{M(R)'})}$, ${P(R)=P(R_{M(R)'})}$ as modules over the $F$--algebra 
$M(R)'$, identifying the left and right actions of ${f\in F}$ with the 
action of ${f \Id_R\in M(R)'}$, and the rings of their $M(R)'$--endomorphisms 
as $F$--algebras. The commutative semiprime $F$--algebra 
${\CM(R)=\End(P(R))_{M(R)'}}$ is called the \emph{Martindale centroid of $R$}. 
Since homomorphisms in $P(R)$ of its (essential) submodules extend to 
(uniquely determined) elements of 
${\CM(R), \Ker \alpha=\mathcal{J}(\End(I(R))_{M(R)'}), \CM(R)}$ is regular 
\cite{Raz}, Lemma 3.1, Proposition 3.1, p. 42, 43, 
${\Ann_F R=\Ann_F P(R)}$, the $F$--algebra  
\[
\End(R)_{M(R)'}\ =\ \{\phi\in \End_F(R)\mid [\phi, M(R)]=\{0\}\} 
\]
(\emph{centroid of $R$}) is embedded in $\CM(R)$, 
${\CM(R)\cong \End(I)_{M(R)'}}$ for any essential complete invariant 
submodule ${I\subseteq P(R)}$. On $P(R)$ one can introduce the structure of 
a $\CM(R)$--algebra\linebreak by continuing the operation of multiplication of 
$R$ on $P(R)$ by $\CM(R)$--linearity: 
\begin{multline*}
\biggl(\sum_i \phi_i a_i\biggr) \biggl(\sum_j \psi_j b_j\biggr)\ =\ 
\sum_{i, j} (\phi_i \psi_j)(a_i b_j)\ =
\\ 
\biggl(\sum_j \psi_j b_j\biggr)\biggl(\sum_i \phi_i l_{a_i}\biggr)
\ =\ \biggl(\sum_i \phi_i a_i\biggr)\biggl(\sum_j \psi_j r_{b_j}\biggr)
\quad (\phi_i, \psi_j\in \CM(R),\ a_i, b_j\in R)\,.
\end{multline*}
The algebra $P(R)$ is called the \emph{central closure of $R$}, it inherits 
semiprimeness and all homogeneous identities of $R$ \cite{Raz}, 
Proposition 3.1, p. 43, 
\begin{gather*}
M(R)'\cong M^{P(R)}(R)'\ =\ F \Id_{P(R)}+M^{P(R)}(R)\,,\quad 
M(P(R))'\ =\ \CM(R) M^{P(R)}(R)'\,,
\\ 
\CM(R)\ =\ \End(P(R))_{M(P(R))'}\ =\ Z(M(P(R))')\,,\quad 
P(R)\ =\ P(P(R)_{M(P(R))'})\,.
\end{gather*}
If $R$ is prime, then $P(R)$ is prime, $\CM(R)$ is a field \cite{Raz}, 
Proposition 3.2, p. 44 and for $R$ with a minimal ideal $I$, such $I$ is 
the smallest non-zero ideal (complete invariant submodule) of $P(R)$, 
${\CM(R)\cong \End(I)_{M(R)'}}$, ${I(I_{M(R)'})=I(R)\ne I=P(I_{M(R)'})}$.
As a consequence, if $R$ is prime, then ${R=P(R)}$, ${\CM(R)=\End(R)_{M(R)'}}$.

If $P'$ is another quasi-injective hull of $R_{M(R)'}$, then there exist 
isomorphisms of $M(R)'$--modules ${\tau: P(R)\longrightarrow P'}$, identical on 
$R$, and ${\hat{\tau}: \phi\longmapsto \tau \phi \tau^{-1}, \phi\in \CM(R)}$, 
of $F$--algebras $\CM(R)$ and ${\CM(R)'=\End(P')_{M(R)'}}$, allowing us to 
consider $P'$ as a $\CM(R)$--algebra and $\tau$ as an isomorphism of 
$\CM(R)$--algebras, 
\[
\tau\,:\ \sum_i \phi_i a_i\longmapsto 
\sum_i (\hat{\tau} \phi)(\tau a_i)\quad (\phi_i\in \CM(R),\ a_i\in R)\,.
\]
In what follows, isomorphisms of central closures of $R$ are isomorphisms of 
this type. 

Let us move on to the construction of the central closure and extended centroid 
$R$ from \cite{Mart, EMO, BM}. On the set  
${\mathcal{C}(R)=\{(\phi, I)\mid I\in \mathcal{E}(R),\ 
\phi\in \Hom(I, R)_{M(R)'}\}}$ we introduce the equi\-valence relation $\sim$: 
${(\phi, I)\sim (\phi', I')}$ if ${\phi=\phi'}$ on 
${I\cap I'}$ (for some ${I''\in \mathcal{E}(R)}$, ${I''\subseteq I\cap I'}$, 
\cite{BM}, Corollary 2.3). Then the quotient set 
${{}_R C=\mathcal{C}(R)/\sim}$ with the operations  
\[
[(\phi, I)]+[(\psi, J)]\ =\ [(\phi+\psi, I\cap J)]\,,\quad 
[(\phi, I)] [(\psi, J)]\ =\ [(\phi \psi, \psi^{-1}(I))]
\]
on equivalence classes of pairs ${(\phi, I), (\psi, J)\in \mathcal{C}(R), 
\psi^{-1}(I)=\{r\in J\mid \psi r\in I\}\in \mathcal{E}(R)}$, is a regular 
associative commutative ring with unity ${1_{{}_R C}=[(\Id_R, R)]}$  
\cite{BM}, Lemma 2.4, Theorem 2.5. The ring ${}_R C$ is called the 
\emph{extended centroid of $R$}. To any ${I\in \mathcal{E}(R)}$ there 
corresponds an embedding of $\End(I)_{M(R)'}$ into ${}_R C$, 
${\gamma\longmapsto [(\gamma, I)]}$, ${\gamma\in \End(I)_{M(R)'}}$, and, 
in particular, ${\End(R)_{M(R)'}\hookrightarrow {}_R C}$, ${}_R C$ can be 
considered as an $F$--algebra, 
\[
f [(\phi, I)]\ =\ [(f \Id_R, R)] [(\phi, I)]\ =\ [(f \phi, I)]\quad 
(f\in F,\ (\phi, I)\in \mathcal{C}(R))\,,
\]
${\Ann_F {}_R C=\Ann_F R}$. The tensor product of $F$--modules 
${{}_R C\mathbin{\otimes_F} R}$ can be transformed into 
a ${}_R C$--algebra with  
\begin{gather*}
[(\phi, I)] \biggl(\sum_i [(\phi_i, I_i)]\otimes a_i\biggr)\ =\ 
\sum_i [(\phi \phi_i, \phi_i^{-1}(I))]\otimes a_i\,,
\\
\biggl(\sum_i [(\phi_i, I_i)]\otimes a_i\biggr) 
\biggl(\sum_j [(\psi_j, J_j)]\otimes b_j\biggr)\ =\ 
\sum_{i, j} [(\phi_i \psi_j, \psi_j^{-1}(I_i))]\otimes (a_i b_j)\,,
\end{gather*}
${(\phi, I), (\phi_i, I_i), (\psi_j, J_j)\in \mathcal{C}(R), a_i, b_j\in R}$. 
An element ${x\in {}_R C\mathbin{\otimes_F} R}$ such that for some ${x_k\in R}$, 
${(\tau_k, H_k)\in \mathcal{C}(R), H\in \mathcal{E}(R), 
H\subseteq \bigcap\limits_k H_k, x=\sum\limits_k [(\tau_k, H_k)]\otimes x_k, 
\sum\limits_k (\tau_k y) (x_k \alpha)=0}$ for all ${y\in H}$, ${\alpha\in M(R)'}$ 
is called \emph{vanishing}, and this record of $x$ is called its 
\emph{$H$--vanishing representation}. Any record of $x$ is its $H$--vanishing 
representation of $x$ for suitable ${H\in \mathcal{E}(R)}$ \cite{BM}, 
Lemma 2.7. The vanishing elements of ${{}_R C\mathbin{\otimes_F} R}$ form 
an ideal $M$, which is the greatest among ${J\lhd {}_R C\mathbin{\otimes_F} R}$, 
\[
J\cap (1_{{}_R C}\otimes R)\ =\ \{0\}\,,\quad 
I_0\ =\ \{[(\lambda, I)]\otimes r-1_{{}_R C}\otimes \lambda r\mid 
(\lambda, I)\in \mathcal{C}(R),\ r\in I\}\subseteq J
\]
\cite{BM}, Lemmas 2.6, 2.11. We identify $R$ with its image in the 
algebra ${{}_R Q=({}_R C\mathbin{\otimes_F} R)/M}$ under the action of the 
$F$--embedding ${r\longmapsto 1_{{}_R C}\otimes r+M}$, ${r\in R}$ 
\cite{BM}, Lemma 2.8, Corollary 2.9 and, for brevity, denote  
${1_{{}_R C}\otimes r+M}$ and ${[(\phi, I)]\otimes r+M}$ by $r$ and 
${[(\phi, I)] r}$, ${r\in R}$, ${(\phi, I)\in \mathcal{C}(R)}$, where 
${[(\phi, I)] r=\phi r}$ for ${r\in I}$. The algebra ${{}_R Q={}_R C R}$ is 
also called the \emph{central closure of $R$}, 
\[
M(R)'\cong M^{{}_R Q}(R)'\ =\ F \Id_{{}_R Q}+M^{{}_R Q}(R)\,,\quad 
M({}_R Q)'\ =\ {}_R C M^{{}_R Q}(R)'\,.
\]
Since $R$ is an essential $M(R)'$--submodule 
($M^{{}_R Q}(R)'$--submodule) of ${}_R Q$, 
\[
\biggl(\sum_m [(\chi_m, U_m)] r_m\biggr) \alpha\ =\ 
\sum_m [(\chi_m, U_m)] (r_m \alpha)\quad (\alpha\in M(R)',\ 
(\chi_m, U_m)\in \mathcal{C}(R),\ r_m\in R)
\]
\cite{BM}, Lemma 2.10, non-zero $M(R)'$--submodules of ${}_R Q$ 
intersect $R$ in non-zero ideals of $R$, ${}_R Q$ inherits the 
semiprimeness (primeness) of $R$. If $R$ is prime, then ${}_R C$ is a field 
\cite{EMO}, Theorem 2.1. According to \cite{BM}, Theorem 2.15, \cite{Cab3},
Theorem 2.4, ${}_R Q$ has the following properties: 
\begin{enumerate}

\item $R$ is embedded in ${}_R Q$ as an $F$--subalgebra ($R$ and its image in 
${}_R Q$ are identified), ${{}_R Q=\End({}_R Q)_{M({}_R Q)'} R}$; 

\item for any ${q\in {}_R Q}$ there is ${I\in \mathcal{E}(R)}$, 
${I (q M(R)')+(q M(R)') I\subseteq R}$, where $q M(R)'$ is a 
$M(R)'$--submodule of ${}_R Q$ generated by $q$;

\item ${I (q M(R)')\ne \{0\}}$ and ${(q M(R)') I\ne \{0\}}$ for any 
${0\ne q\in {}_R Q}$, ${I\in \mathcal{E}(R)}$;

\item any ${\phi\in \Hom(I, R)_{M(R)'}}$, ${I\lhd R}$, continues to some 
${\overline{\phi}\in \End({}_R Q)_{M({}_R Q)'}}$;

\item the action ${}_R C$ on ${}_R Q$ is exact, 
${{}_R C\cong {}_R C \Id_{{}_R Q}=\End({}_R Q)_{M({}_R Q)'}\cong 
{}_{{}_R Q} C}$, where the last isomorphism 
${\gamma\longmapsto [(\gamma, {}_R Q)]}$, 
${\gamma\in \End({}_R Q)_{M({}_R Q)'}}$, and ${{}_R Q={}_{{}_R Q} Q}$.

\end{enumerate}
Properties 1--4 determine ${}_R Q$ uniquely up to isomorphism in the sense 
that for any $F$--algebra $Q$ with such properties, 
${\End(Q)_{M(Q)'}\cong {}_R C}$ as $F$--algebras and 
${Q\cong {}_R Q}$ as ${}_R C$--algebras via an isomorphism  
identical on $R$ \cite{Cab3}, the proof of Theorem 2.4. 

\begin{prop}
The algebra $P(R)$ has properties 1--4, the algebra ${}_R Q$ is the 
quasi-injective hull of the $M(R)'$--module $R$, ${\CM(R)\cong {}_R C}$ and 
${P(R)\cong {}_R Q}$.
\end{prop}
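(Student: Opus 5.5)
The plan is to verify properties 1--4 for $P(R)$ and then use the uniqueness result quoted from \cite{Cab3}. That result gives ${\End(P(R))_{M(P(R))'}\cong {}_R C}$ and ${P(R)\cong {}_R Q}$ as ${}_R C$--algebras, via an isomorphism identical on $R$. Since ${\CM(R)=\End(P(R))_{M(P(R))'}}$ (recalled from \cite{Raz}), this yields ${\CM(R)\cong {}_R C}$. The quasi-injectivity of ${}_R Q$ then follows by transport: the isomorphism $P(R)\cong {}_R Q$ is identical on $R$ and is an $M(R)'$--module isomorphism, so ${}_R Q$ is a quasi-injective hull of $R_{M(R)'}$.

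The key steps for $P(R)$ are as follows.

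\textbf{Property 1.} $R$ is an $F$--subalgebra of $P(R)$, since the multiplication on $P(R)$ extends that of $R$. Moreover ${P(R)=\CM(R) R}$ with ${\CM(R)=\End(P(R))_{M(P(R))'}}$.

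\textbf{Property 4.} Let ${I\lhd R}$ and ${\phi\in \Hom(I, R)_{M(R)'}}$. Then $\phi$ is a homomorphism of a submodule of $P(R)$ into $P(R)$. By quasi-injectivity it extends to an $M(R)'$--endomorphism of $P(R)$, that is, to an element of $\CM(R)=\End(P(R))_{M(P(R))'}$.

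\textbf{Property 3.} Take $0\ne q\in P(R)$ and set $N=qM(R)'$. Since $R$ is essential in $P(R)$, the intersection $N\cap R$ is a nonzero ideal of $R$. For ${I\in\mathcal E(R)}$ we have $I\cap (N\cap R)\ne\{0\}$, and semiprimeness gives $I(I\cap N\cap R)\ne\{0\}$. It follows that $IN\ne\{0\}$, and symmetrically $NI\neq\{0\}$.

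\textbf{Property 2.} Write $q=\sum_i\phi_i a_i$ with $\phi_i\in\CM(R)$. For each $i$, the preimage $J_i=\{x\in R\mid \phi_i x\in R\}$ is an ideal of $R$. It is essential, because $\phi_i$ restricted to a suitable essential submodule lands in $R$ by essentiality of $R$ in $P(R)$; concretely, $J_i\supseteq$ the preimage under $\phi_i$ of $R\cap \phi_i R$, which one checks is essential. Now put $I=\bigcap_i J_i$. Then
\[
x\,(q\alpha)=\sum_i \phi_i\bigl(x\,(a_i\alpha)\bigr)=\sum_i (a_i\alpha)\,r\text{-side }\ldots
\]
Rather than expanding this, use the displayed rule for multiplication in $P(R)$: $x(\phi_i b)=(\phi_i x)b\in R$ for $x\in I$. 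This gives $I(qM(R)')\subseteq R$, and similarly $(qM(R)')I\subseteq R$.

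The main obstacle is the essentiality of $\phi^{-1}(R)\cap R$ for $\phi\in\CM(R)$. I would handle it through the standard fact that, for quasi-injective hulls, $\{x\in R\mid \phi x\in R\}$ is an essential submodule. Given $0\ne K\lhd R$, consider the map $\phi|_K$. Either $\phi K=\{0\}$, and then $K$ already lies in the preimage; or $\phi K\cap R\ne\{0\}$ by essentiality, and then the preimage of $\phi K\cap R$ is a nonzero submodule of $K$. Submodules of $R$ under $M(R)'$ are exactly ideals, so this gives the needed nonzero intersections.
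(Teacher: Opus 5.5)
Your proposal is correct and follows essentially the paper's route. It uses the same essentiality argument for $\{x\in R\mid \phi x\in R\}$, the same verification of properties 2 and 3 via a finite intersection of these essential ideals and semiprimeness, properties 1 and 4 from $\CM(R)=\End(P(R))_{M(P(R))'}$ and the quasi-injectivity of $P(R)$, and then the uniqueness result of \cite{Cab3} to get $\CM(R)\cong {}_R C$ and $P(R)\cong {}_R Q$. The only difference is that you obtain the quasi-injective hull claim for ${}_R Q$ by transporting it along that isomorphism (an algebra isomorphism identical on $R$ is automatically an $M(R)'$--module isomorphism), whereas the paper checks it directly by extending homomorphisms from essential submodules via property 4; both arguments are valid.
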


\begin{proof}
If ${\phi: N\longrightarrow M}$ is a homomorphism into a module $M$ of its 
submodule $N$, $K$ is an essential submodule of $M$, then 
${{}_{\phi, K} N=\phi^{-1}(\phi(N)\cap K)=\{a\in N\mid \phi a\in K\}}$
is an essential submodule of $N$ ($M$ for an essential $N$ in it), since for 
any submodule ${\{0\}\ne N'\subseteq N}$ either 
${\phi(N')=\{0\}}$, ${N'\subseteq {}_{\phi, K} N}$, or ${\phi(N')\ne \{0\}}$, 
${\phi(N')\cap K\ne \{0\}}$,
\[
{}_{\phi, K} N\cap N'\supseteq \phi^{-1}(\phi(N')\cap K)\cap N'\ \ne\ \{0\}\,.
\]
So, ${{}_{\phi} R={}_{\phi, R} R\in \mathcal{E}(R)}$ for any ${\phi\in \CM(R)}$. 
If ${q=\sum\limits_{i=1}^k \nu_i q_i\in P(R)}$ for some 
${k\geq 1, \nu_i\in \CM(R)}$, ${q_i\in R}$, then 
${D=\bigcap\limits_{i=1}^k {}_{\nu_i} R\in \mathcal{E}(R), 
D (q M(R)')+(q M(R)') D\subseteq \sum\limits_{i=1}^k \nu_i(D)\subseteq R}$,
where $q M(R)'$ is a $M(R)'$--submodule of $P(R)$ generated by $q$. Due to 
the semiprimeness of $R$, for ${q\ne 0}$ 
\[
\{0\}\ \ne\ q M(R)'\cap J\lhd R\,,\quad 
\{0\}\ \ne\ (q M(R)'\cap J)^2\subseteq J (q M(R)')\cap (q M(R)') J
\quad (J\in \mathcal{E}(R))\,.
\]
The presence of property 4 in $P(R)$ guarantees its quasi-injectivity as 
a $M(R)'$--module and ${\CM(R)=\End(P(R))_{M(P(R))'}}$.
From this and the arguments of \cite{Cab3}, the proof of Theorem 2.4, it 
follows that 
${\delta: \phi\longmapsto [(\phi|_{{}_{\phi} R}, {}_{\phi} R)], 
\phi\in \CM(R)}$,
is an isomorphism of $F$--algebras $\CM(R)$ and ${}_R C$, 
${\rho: \sum\limits_i \phi_i r_i\longmapsto 
\sum\limits_i \delta(\phi_i) r_i, \phi_i\in \CM(R), r_i\in R}$, is an 
isomorphism of $\CM(R)$--algebras $P(R)$ and ${}_R Q$, identical on $R$.

Let us prove that ${}_R Q$ is the quasi-injective hull of the $M(R)'$--module 
$R$. Since $R$ is an essential $M(R)'$--submodule of ${}_R Q$ and
${{}_R C \Id_{{}_R Q}=\End({}_R Q)_{M({}_R Q)'}}$ (see above),
\[
{}_R Q\ =\ {}_R C R\ =\ \End({}_R Q)_{M({}_R Q)'} R\ =\ 
\End({}_R Q)_{M(R)'} R\,,
\]
it suffices to show that any ${\phi\in \Hom(M, {}_R Q)_{M(R)'}}$, 
${M_{M(R)'}\subseteq {}_R Q}$, extends to an element of $\End({}_R Q)_{M(R)'}$. 
We can consider $M$ to be an essential $M(R)'$--submodule of ${}_R Q$, since 
otherwise it can be replaced by ${M\oplus K}$, where $K$ is the maximal among 
${L_{M(R)'}\subseteq {}_R Q}$, ${L\cap M=\{0\}}$, and $\phi$ can be 
continued to ${\phi\in \Hom(M\oplus K, {}_R Q)_{M(R)'}}$, setting ${\phi|_K=0}$ 
\cite{Lamb}, Lemma 1, p. 104. Then 
${M'={}_{\phi, R} M\cap R\in \mathcal{E}(R)}$ is an essential 
$M(R)'$--submodule of ${}_R Q$, by property 4 
${\phi'=\phi|_{M'}\in \Hom(M', R)_{M(R)'}}$ can be continued to 
${\overline{\phi'}\in \End({}_R Q)_{M({}_R Q)'}}$. Identifying ${}_R Q$ with the 
submodule of $I(R)$, we obtain ${P(R)=P({}_R Q)=\End(I(R))_{M(R)'} R}$ (elements 
$\End({}_R Q)_{M(R)'}$ extend to elements of $\End(I(R))_{M(R)'}$). 
In view of the uniqueness of the definition of extensions of homomorphisms in $P(R)$ 
of its essential submodules to endomorphisms of $P(R)$, 
${\psi=\phi-\overline{\phi'}|_M\in \Hom(M, {}_R Q)_{M(R)'}}$ with an essential 
kernel ${\Ker \psi\supseteq M'}$ in ${}_R Q$ and $P(R)$ is equal to zero,
${\phi=\overline{\phi'}}$ on $M$. Hence ${{}_R Q=P(R)}$, 
\[
\CM(R)\ =\ \End({}_R Q)_{M(R)'}\ =\ 
\End({}_R Q)_{M({}_R Q)'}\ =\ {}_R C \Id_{{}_R Q}\cong {}_R C\,.
\]
\end{proof}                   

If $P(R)$ is a finitely generated $\CM(R)$--module (endofinite 
$M(R)'$--module), ${P(R)=I(R)}$ \cite{Fe}, Vol. 2, 
Theorem 19.14 A, p. 111.

Based on \cite{B6, BMich, BMM}, we will construct a general construction of 
the orthogonal completion of the semiprime algebra $R$. Let us transform 
${B(R)=\{\alpha\in \CM(R)\mid \alpha=\alpha^2\}}$ into a Boolean ring with 
multiplication of $\CM(R)$, addition 
${\alpha\oplus \beta=(\alpha-\beta)^2=\alpha+\beta-2 \alpha \beta}$ and 
partial order ${\alpha\leq \beta}$ if 
${\alpha \beta=\alpha}$, ${\alpha, \beta\in B(R)}$
\cite{BMich}, p. 1.24, \cite{Lamb}, Proposition 11, p. 49. The set 
${B\subseteq B(R)}$ is \emph{dense} in $B(R)$ if ${\Ann_{B(R)} B=\{0\}}$ 
(${\Ann_{\CM(R)} B=\{0\}}$, due to the regularity of $\CM(R)$), and 
\emph{orthogonal} if ${\alpha \beta=0}$ for all ${\alpha, \beta\in B}$, 
${\alpha\ne \beta}$. If ${B\subseteq B(R)\setminus \{0\}}$ and 
\begin{enumerate}

\item ${\alpha \beta\in B}$ for all ${\alpha, \beta\in B}$; 

\item if ${\alpha\in B}$, then ${\beta\in B}$ for all ${\beta\in B(R)}$, 
${\alpha\leq \beta}$; 

\item for any ${\alpha\in B(R)}$ either ${\alpha\in B}$ or 
${\Id_{P(R)}-\alpha\in B}$,

\end{enumerate}
then $B$ is an \emph{ultrafilter} in $B(R)$. Note that ultrafilters in $B(R)$ 
are exhausted by sets ${B(R)\setminus P}$,\linebreak $P$ is the maximal ideal of 
$B(R)$ ($B(R)/P$ is a field of 2 elements) \cite{Lamb}, Proposition 1, p. 59. 
As before for $R$, we choose ${I(P(R))=I(P(R)_{M(P(R))'})}$ for the 
$\CM(R)$--algebra $P(R)$.

An associative ring $A$ with 1 is called \emph{right self-injective} if the 
$A$--module $A_A$ is injective, i.e. for any right ideal ${I\subseteq A}$ 
and ${\phi\in \Hom(I, A)_A}$ there is ${x\in A}$, ${\phi y=x y}$ for all 
${y\in I}$. The\linebreak right self-injectivity of $A$ is equivalent to the 
quasi-injectivity of $A_A$ \cite{Lamb}, Lemma 1, p.\linebreak 147, 
\cite{Fe}, ex. 19.1.4, p. 103. Naturally, for a commutative $A$, indicating 
the side of self-injec-\linebreak tivity is superfluous. 
Notice that in the associative $A$ 
with 1 for any ${e_i=e_i^2\in A}$, ${i=1, 2}$, ${e_1 e_2=e_2 e_1}$, 
${e=e_1+e_2-e_1 e_2=(e_1 (1-e_2)-e_2)^2=e^2}$, ${e e_i=e_i e=e_i}$, ${i=1, 2}$. 
Therefore, if ${e_i=e_i^2\in A}$, ${e_i e_j=e_j e_i}$, 
${i, j=1, \ldots, k}$, ${k\geq 1}$, then there exists ${e=e^2\in A}$, 
${e e_i=e_i e=e_i}$, ${i=1, \ldots, k}$.

\begin{rem} 
If ${E\subseteq \CM(R)}$, ${\Ann_{\CM(R)} E=\{0\}}$, then 
\[
E^{\perp}\ =\ \{x\in I(P(R))\mid E x=\{0\}\}\ =\ \{0\}\,.
\]
\end{rem}

\begin{proof}
Since $E^{\perp}$ is a $M(P(R))'$--submodule of $I(P(R))$, from 
${E^{\perp}\ne \{0\}}$ it follows that ${E^{\perp}\cap P(R)\ne \{0\}}$, 
${M=E^{\perp}\cap R\ne \{0\}}$. By complementing the $M(R)'$--module 
($M^{P(R)}(R)'$--module) $M$ to an essential $M(R)'$--submodule 
${M\oplus M'\subseteq P(R)}$, one can choose ${\alpha\in B(R)}$ such 
that ${\alpha|_M=\Id_M}$, ${\alpha(M)'=\{0\}}$. Consequently, 
${\alpha \beta(M\oplus M')=\{0\}}$, ${\alpha \beta=0}$ for all 
${\beta\in E}$, ${\alpha\in \Ann_{\CM(R)} E=\{0\}}$?!
\end{proof}

\begin{lemma}
The ring $\CM(R)$ is self-injective and for any ${S\subseteq I(P(R))}$ there 
is a unique ${\alpha\in B(R)}$, ${\Ann_{\CM(R)} S=\alpha \CM(R)}$.
\end{lemma}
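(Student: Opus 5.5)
We prove self-injectivity of $\CM(R)$ first and then derive the annihilator statement from it together with the Remark.

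\emph{Self-injectivity.} Let $I$ be an ideal of $\CM(R)$ and $\phi\in \Hom(I, \CM(R))_{\CM(R)}$. Put $N=I P(R)=\sum_{\iota\in I}\iota(P(R))$. It is an $M(P(R))'$--submodule of $P(R)$, because $\CM(R)=Z(M(P(R))')$. Define $\Phi: N\longrightarrow P(R)$ by
\[
\Phi\Bigl(\sum_k \iota_k x_k\Bigr)\ =\ \sum_k \phi(\iota_k) x_k\,.
\]
To see that $\Phi$ is well defined, suppose $\sum_k \iota_k x_k=0$. By regularity of $\CM(R)$, the finitely generated ideal $\sum_k \iota_k\CM(R)$ equals $e\CM(R)$ for some $e\in B(R)\cap I$, and $\iota_k=e\iota_k$. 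Then $\phi(\iota_k)=\phi(e\iota_k)=\phi(e)\iota_k$, so
\[
\sum_k \phi(\iota_k)x_k\ =\ \phi(e)\sum_k \iota_k x_k\ =\ 0\,.
\]
Thus $\Phi\in\Hom(N,P(R))_{M(P(R))'}$. Since $P(R)$ is quasi-injective over $M(P(R))'$ (shown in the Proposition), $\Phi$ extends to some $\gamma\in\End(P(R))_{M(P(R))'}=\CM(R)$. Then $\gamma\iota x=\phi(\iota)x$ for all $x\in P(R)$, and exactness of the action of $\CM(R)$ on $P(R)$ gives $\phi(\iota)=\gamma\iota$ for all $\iota\in I$. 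So $\CM(R)$ is self-injective.

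\emph{Annihilators.} Let $S\subseteq I(P(R))$ and $A=\Ann_{\CM(R)}S$, an ideal of $\CM(R)$. Let $A^{\perp}=\Ann_{\CM(R)}A$, choose a maximal orthogonal family $\{e_j\}\subseteq B(R)\cap A$, and let $K=\bigoplus_j e_j\CM(R)$. Regularity together with maximality makes $K+A^{\perp}$ have zero annihilator, hence $K\oplus A^{\perp}$ is essential.

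Define $\phi$ on $K\oplus A^{\perp}$ by $\phi=\Id$ on $K$ and $\phi=0$ on $A^{\perp}$. By self-injectivity $\phi$ is multiplication by some $\alpha\in\CM(R)$. We check that $\alpha$ is idempotent: $\alpha^2-\alpha$ kills the essential ideal $K\oplus A^{\perp}$, and in the semiprime regular commutative ring $\CM(R)$ an element annihilating an essential ideal is zero. Hence $\alpha\in B(R)$. The same argument gives $\alpha A^{\perp}=0$ and $\alpha e_j=e_j$.

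\emph{Showing $\alpha\CM(R)=A$.} For $A\subseteq\alpha\CM(R)$: if $a\in A$, then $(a-\alpha a)$ annihilates $A^{\perp}$ (since $aA^{\perp}=0$ and $\alpha A^{\perp}=0$) and it annihilates $K$ (by $\alpha e_j=e_j$). Hence $a-\alpha a=0$, so $a=\alpha a\in\alpha\CM(R)$.

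For $\alpha\CM(R)\subseteq A$ we must show $\alpha S=0$. Put $E=\{e_j\}\cup\{1-\alpha\}$. Its annihilator in $\CM(R)$ is zero: if $\beta e_j=0$ for all $j$ and $\beta(1-\alpha)=0$, then $\beta=\beta\alpha$ kills $K$, and $\beta\alpha$ kills $A^{\perp}$, so $\beta=0$. Now fix $s\in S$ and set $x=\alpha s$. Then $e_jx=e_j\alpha s=e_js=0$ because $e_j\in A$, and $(1-\alpha)x=0$. So $x\in E^{\perp}$, which is zero by the Remark. Hence $\alpha S=0$ and $\alpha\in A$.

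\emph{Uniqueness.} Idempotents generating the same principal ideal coincide: if $\alpha\CM(R)=\beta\CM(R)$, then $\alpha=\alpha\beta=\beta$.

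The main obstacle is the last inclusion $\alpha S=0$. It is exactly why the Remark is needed, since $S$ may lie outside $P(R)$. It also uses that $\CM(R)$ acts on $I(P(R))$, i.e.\ that the elements of $\CM(R)$ extend to the injective hull. This was assumed implicitly in the Remark; if not, one extends them via injectivity and uses that the extensions are unique modulo $\mathcal{J}$, which acts trivially on the relevant torsion-free parts.
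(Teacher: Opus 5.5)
Your proof is correct and is essentially the paper's. Self-injectivity is obtained exactly as there, by extending $\tau_{\phi}$ from $I P(R)$ (the paper uses $I R$) to an element of $\CM(R)$ via quasi-injectivity of $P(R)$. For the annihilator statement you likewise build an idempotent from maximal orthogonal families by self-injectivity and then use Remark 2.2 to get $\alpha S=\{0\}$; you merely apply Baer's criterion to the essential ideal $K\oplus \Ann_{\CM(R)} A$ where the paper uses the retraction $\nu$ of $\CM(R)\hookrightarrow \prod_{\gamma\in W}\gamma \CM(R)$, and your $\alpha$ is the paper's $\Id_{P(R)}-\eta$.

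The hedge in your last paragraph is unnecessary: $\CM(R)=\CM(R)\Id_{P(R)}\subseteq M(P(R))'$, so $\CM(R)$ acts on $I(P(R))=I(P(R)_{M(P(R))'})$ by construction, and no extension argument modulo $\mathcal{J}$ is needed.
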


\begin{proof}
For any ${I\lhd \CM(R)}$, ${\phi\in \Hom_{\CM(R)}(I, \CM(R))}$ the mapping 
\[
\tau_{\phi}\,:\ \sum_i \alpha_i x_i\longmapsto \sum_i (\phi \alpha_i) x_i
\quad (\alpha_i\in I, x_i\in R)
\]
is correctly defined, since from ${\sum\limits_i \alpha_i x_i=0}$ it follows 
that ${\alpha_i=\alpha_i^2 \beta_i}$ for some ${\beta_i\in \CM(R)}$, there is 
${\gamma\in I\cap B(R)}$, ${\gamma \alpha_i \beta_i=\alpha_i \beta_i\in I\cap B(R)}$, 
${\gamma \alpha_i=\alpha_i}$ for some $i$, 
\[
\sum_i (\phi \alpha_i) x_i\ =\ \sum_i (\phi (\gamma \alpha_i)) x_i\ =\ 
(\phi \gamma) \sum_i \alpha_i x_i\ =\ 0\,.
\]
Hence ${\tau_{\phi}\in \Hom(I R, P(R))_{M(R)'}}$ extends to 
${\overline{\tau_{\phi}}\in \CM(R)}$, 
${\overline{\tau_{\phi}} \alpha x=(\phi \alpha) x}$, 
${\phi \alpha=\overline{\tau_{\phi}} \alpha}$ for all ${\alpha\in I}$, 
${x\in R}$, the ring $\CM(R)$ is self-injective (this can be directly 
deduced from \cite{Fe}, Theorem 19.27, p. 123). 
If ${S\subseteq I(P(R))}$, $V$ and $V'$ are maximal orthogonal subsets of 
${\Ann_{B(R)} S}$ and ${\Ann_{B(R)} \Ann_{B(R)} S}$, then ${W=V\cup V'}$ 
is a dense orthogonal subset of $B(R)$  
\cite{BMich}, the proof of Lemma 1.6, there is an embedding 
${\mu\in \Hom\Bigl(\CM(R), 
\prod\limits_{\gamma\in W} \gamma \CM(R)\Bigr)_{\CM(R)}, 
\mu \phi: \gamma\longmapsto \gamma \phi}$, 
${\phi\in \CM(R), \gamma\in W,}$ and 
${\nu\in \Hom(\prod\limits_{\gamma\in W} \gamma \CM(R), \CM(R))_{\CM(R)}, 
\nu \mu=\Id_{\CM(R)}}$ (self-injectivity\linebreak of $\CM(R)$). Therefore for 
${\theta\in \prod\limits_{\gamma\in W} \gamma \CM(R)}$, 
${\theta(V)=\{0\}}$, ${\theta|_{V'}=\Id_{V'}}$, and 
${\nu \theta=\eta\in \CM(R)}$
\[
\nu (\theta \alpha)\ =\ \nu 0\ =\ 0\ =\ \eta \alpha\,,\quad 
\quad \nu (\theta \beta)\ =\ \nu \mu \beta\ =\ \beta\ =\ \eta \beta\quad 
(\alpha\in V,\ \beta\in V')\,,
\]
${\eta^2-\eta\in \Ann_{\CM(R)} W=\Ann_{B(R)} W=\{0\}, 
\eta\in \Ann_{B(R)} V=\Ann_{B(R)} \Ann_{B(R)} S}$ \cite{BMich}, p. 1.5,
${(\Id_{P(R)}-\eta) \Ann_{B(R)} \Ann_{B(R)} S\subseteq \Ann_{B(R)} W=\{0\}}$, 
${\Ann_{B(R)} \Ann_{B(R)} S=\eta B(R)}$. According to Remark 2.2, 
${S (\Id_{P(R)}-\eta)\subseteq W^{\perp}=\{0\}}$ and so, 
\[
\Ann_{B(R)} S\ =\ (\Id_{P(R)}-\eta) B(R)\,,\ 
\Ann_{\CM(R)} S\ =\ \CM(R) \Ann_{B(R)} S\ =\ (\Id_{P(R)}-\eta) \CM(R)\,.
\]
It remains to note that the generators of the principal ideal of $B(R)$ are 
uniquely determined, since if ${\alpha B(R)=\alpha' B(R)\lhd B(R)}$, then 
${\alpha \alpha'=\alpha=\alpha'}$. 
\end{proof}

It follows that for any ${B\subseteq \CM(R)}$ there exists a unique 
${\alpha\in B(R)}$, 
\begin{gather*}
\Ann_{\CM(R)} B P(R)\ =\ \Ann_{\CM(R)} B\ =\ \alpha \CM(R)\,,
\\ 
\Ann_{B(R)} B\ =\ B(R)\cap \alpha \CM(R)\ =\ \alpha B(R)\,, 
\end{gather*}
i.e. the Boolean ring $B(R)$ is \emph{orthogonally complete} in the sense of 
\cite{BMich}, p. 1.7 (this can also be deduced from the proof of Lemma 2.3 and 
condition 1 of Lemma 1.6, \cite{BMich}). For a $m.s.p.$--algebra $R$, the 
self-injectivity of $\CM(R)$ at once follows from \cite{Cab3}, Theorem 4.3 
(in the form before Corollary 2.11, \cite{Gol10}) and \cite{BMich}, Theorem 8.4.
We call a set ${S\subseteq I(P(R))}$ \emph{orthogonally complete} if for any 
dense orthogonal ${\{\xi_a\}_{a\in I}\subseteq B(R), 
\{x_a\}_{a\in I}\subseteq S}$ there exists ${x\in S, \xi_a x=\xi_a x_a}$ 
for all\linebreak ${a\in I}$. Following \cite{BMich}, we denote such $x$ by 
${{\sum\limits_{a\in I}}^{\perp} \xi_a x_a}$, the equalities 
${\xi_a x=\xi_a x_a}$, ${a\in I}$, determine the choice of $x$ uniquely 
(Remark 2.2). The \emph{orthogonal completion} $O(S)$ of the set 
${S\subseteq I(P(R))}$ is the intersection of all orthogonally complete 
${S'\subseteq I(P(R)), S\subseteq S'}$.
As in \cite{B6}, Lemma 1, \cite{BMich}, Proposition 8.4, we obtain 

\begin{prop}
1) ${I(P(R))=O(I(P(R)))}$ is orthogonally complete; 2) if 
${S\subseteq I(P(R))}$, ${O(S)=O(O(S))}$ is the set of all 
${{\sum\limits_{a\in I}}^{\perp} \xi_a x_a\in I(P(R))}$ for a dense 
orthogonal ${\{\xi_a\}_{a\in I}\subseteq B(R)}$, 
${\{x_a\}_{a\in I}\subseteq S}$; 
3) $O(P(R))$ is a semiprime $\CM(R)$--algebra with the operations 
\[
{\sum\limits_{a\in I}}^{\perp} \xi_a x_a\star 
{\sum\limits_{b\in J}}^{\perp} \chi_b y_b\ =\ 
{\sum\limits_{a\in I,\ b\in J}}^{\perp} \xi_a \chi_b (x_a\star y_b)\quad 
(\star=+, \cdot)
\] 
for dense orthogonal ${\{\xi_a\}_{a\in I}, 
\{\chi_b\}_{b\in J}\subseteq B(R), \{x_a\}_{a\in I}, 
\{y_b\}_{b\in J}\subseteq P(R)}$; 4) $O(P(R))$ is a complete invariant 
$M(P(R))'$--submodule $I(P(R))$, 
\[
\End(O(P(R)))_{M(P(R))'}\ =\ \End(O(P(R)))_{M(O(P(R)))'}\ =\ 
\CM(R) \Id_{O(P(R))}\,;
\] 
5) $O(R)$ is a semiprime $F$--subalgebra of $O(P(R))$ and if any element of 
$P(R)$ is a $\CM(R)$--linear combination of $n$ elements of $R$ for 
some ${n\geq 1}$, then ${O(P(R))=\CM(R) O(R)}$; 
6) for any ${S=O(S)\subseteq I(P(R))}$ there is ${x\in S}$, 
${\Ann_{\CM(R)} S=\Ann_{\CM(R)} x}$. 
\end{prop}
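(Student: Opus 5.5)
The plan follows \cite{B6}, Lemma 1, and \cite{BMich}, Proposition 8.4. The basic tools are three facts already established:
\begin{itemize}
\item Remark 2.2 (uniqueness: an element of $I(P(R))$ killed by a dense subset of $\CM(R)$ is zero);
\item Lemma 2.3 (self-injectivity of $\CM(R)$ and orthogonal completeness of $B(R)$);
\item the fact that elements of $\CM(R)$ extend to elements of $\End(I(P(R)))_{M(P(R))'}$, so that $\CM(R)$ acts on $I(P(R))$.
\end{itemize}

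\emph{Part 1.} Take a dense orthogonal family $\{\xi_a\}$ and elements $x_a\in I(P(R))$. The submodules $\xi_a I(P(R))$ form an independent family, and their sum $N=\bigoplus_a \xi_a I(P(R))$ is essential in $I(P(R))$. Indeed, a nonzero submodule $L$ with $L\cap N=\{0\}$ satisfies $\xi_a L\subseteq L\cap N=\{0\}$ for all $a$, contradicting Remark 2.2. Define $\psi\colon N\to I(P(R))$ by $\xi_a y\mapsto \xi_a x_a$. This is an $M(P(R))'$--homomorphism because the $\xi_a$ commute with $M(P(R))'$. Extend $\psi$ to the cyclic module generated by $N$ and $\sum_a\xi_a x_a$ formally; more simply, use injectivity to extend to $\Psi\in\End(I(P(R)))$. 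The cleaner route is to consider the module $\prod_a \xi_a I(P(R))$, which is injective as a product of injective summands. The diagonal embedding $y\mapsto(\xi_a y)_a$ is injective by Remark 2.2 and has an essential image, so by injectivity of $I(P(R))$ there is a retraction onto $I(P(R))$. Let $x$ be the image of $(\xi_a x_a)_a$ under this retraction. Then $\xi_b x=\xi_b x_b$ can be checked by the same trick used in Lemma 2.3, namely applying $\xi_b$ and using that the retraction is a $\CM(R)$--map on the essential part. Uniqueness of $x$ is Remark 2.2.

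\emph{Part 2.} Let $T$ be the set of all such orthogonal sums with $x_a\in S$. Clearly $S\subseteq T\subseteq O(S)$, so it suffices to show that $T$ is orthogonally complete. Given sums $x_b={\sum}^\perp_a \xi_{ba}x_{ba}$ over a dense orthogonal family $\{\chi_b\}$, the products $\chi_b\xi_{ba}$ (nonzero ones) form a dense orthogonal family. Density holds because if $\gamma$ annihilates all of them, then $\gamma\chi_b$ annihilates the dense family $\{\xi_{ba}\}_a$, so $\gamma\chi_b=0$ for all $b$ and hence $\gamma=0$. By part 1 and uniqueness, ${\sum}^\perp_{a,b}\chi_b\xi_{ba}x_{ba}$ works.

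\emph{Part 3.} First show that the operations are well defined. Given two representations of the same element, refine to the common family of products and compare using Remark 2.2. Multiplication in $I(P(R))$ is not defined, so the product is defined only on $O(P(R))$ by this formula. The identities of a $\CM(R)$--algebra then pass componentwise. For semiprimeness, if $I\lhd O(P(R))$ with $I^2=0$, then $I\cap P(R)$, and in fact $\xi I$ intersected appropriately, reduces the problem to $P(R)$ via multiplication by $\xi_a$.

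\emph{Part 4.} Use that each $\phi\in\End(I(P(R)))_{M(P(R))'}$ commutes with the $\xi_a$. This holds because idempotents of $\CM(R)$ extend to central idempotent endomorphisms; this is the delicate point, and I would handle it via $\Ker(\phi\xi-\xi\phi)\supseteq P(R)$ being essential together with the Jacobson radical description. Hence $\phi({\sum}^\perp\xi_a x_a)={\sum}^\perp\xi_a\phi x_a$, which gives complete invariance. For the endomorphism ring: an endomorphism of $O(P(R))$ restricts on the essential submodule $P(R)$ to a map into $O(P(R))$. Localizing by a dense family reduces this to $\CM(R)$, using that $\CM(R)$ equals the endomorphisms of $P(R)$ and is self-injective.

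\emph{Parts 5 and 6.} Part 5 is the analogous closure argument inside $R$. If every $q\in P(R)$ is $\sum_{i\le n}\nu_i r_i$, then ${\sum}^\perp\xi_a q_a=\sum_i \bigl({\sum}^\perp\xi_a\nu_{ai}\bigr)\bigl({\sum}^\perp\xi_a r_{ai}\bigr)$, using orthogonal completeness of $\CM(R)$ from Lemma 2.3. For part 6, write $\Ann S=\alpha\CM(R)$ by Lemma 2.3. Choose a maximal orthogonal family of $\xi_a\le\Id-\alpha$ with $\xi_a=\Id-\alpha_{x_a}$ for suitable $x_a\in S$, and add $\alpha$ with $x=0$. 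Then $x={\sum}^\perp\xi_a x_a\in S$ has annihilator $\alpha\CM(R)$.

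I expect the main obstacle to be part 4, namely the commutation of endomorphisms with the idempotents together with the computation of the endomorphism ring.
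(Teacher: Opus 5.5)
Your route matches the paper's in every part: the split monomorphism $y\mapsto(\xi_a y)_a$ into $\prod_a\xi_a I(P(R))$, flattening of nested orthogonal sums, common refinements, commutation of endomorphisms with the $\xi_a$, orthogonal completeness of $\CM(R)$, and a maximal orthogonal family of supports. The gap is in the step you single out as the main obstacle.

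\textbf{The commutation step.} Knowing that $\Ker(\phi\xi-\xi\phi)\supseteq P(R)$ is essential only puts $\phi\xi-\xi\phi$ into $\mathcal{J}(\End(I(P(R)))_{M(P(R))'})$. That ideal is exactly the set of endomorphisms with essential kernel, i.e.\ the kernel of restriction to $P(R)$, and it need not be zero. So this argument cannot give $\phi\xi=\xi\phi$.

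For the same reason, if $\CM(R)$ acted on $I(P(R))$ through chosen extensions of its elements, as your third bullet says, that action would only be defined modulo $\mathcal{J}$. Remark 2.2, Lemma 2.3 and your part 1 would then not even be well defined.

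The missing observation is this: $\CM(R)=Z(M(P(R))')$ is a central subring of $M(P(R))'$, and $I(P(R))=I(P(R)_{M(P(R))'})$. So $\xi\in B(R)$ acts as a ring element, and \emph{every} $M(P(R))'$-homomorphism commutes with it automatically; nothing needs proving. This is also the correct justification in part 1. The retraction $\phi_\Xi$ is $M(P(R))'$-linear, so it commutes with $\xi_b$ on the whole product, not just ``on the essential part''. Moreover $\xi_b(\xi_a x_a)_a=\psi_\Xi(\xi_b x_b)$, which gives $\xi_b x=\xi_b x_b$. Your remark that the diagonal image is essential even shows $\psi_\Xi$ is an isomorphism, since an essential direct summand is the whole module.

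\textbf{Remaining omissions.} Even with commutation, complete invariance of $O(P(R))$ needs $\phi x_a\in P(R)$. That is, you need $P(R)$ to be completely invariant in $I(P(R))$ (quasi-injectivity of $P(R)$), which you do not mention. With it, $\phi$ acts on $O(P(R))$ as $\phi|_{P(R)}\in\CM(R)$.

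For the endomorphism ring, the paper simply extends $\theta\in\End(O(P(R)))_{M(P(R))'}$ to $I(P(R))$ by injectivity and applies the previous sentence. Self-injectivity of $\CM(R)$ plays no role. Your ``localization'' idea can be made to work, but you need to say how: $\{x\in P(R)\mid \theta x\in P(R)\}$ contains every $\xi_a x$, hence is essential. Extend $\theta$ from it by quasi-injectivity of $P(R)$, then conclude with Remark 2.2.

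Finally, part 5 also asserts that $O(R)$ is semiprime, and you do not address this. An ideal $T$ of $O(R)$ is not an ideal of $O(P(R))$, so argue via essentiality. If $0\neq T\lhd O(R)$, then $\CM(R)T$ is a nonzero $M(P(R))'$-submodule of $I(P(R))$. It therefore meets $P(R)$ in a nonzero ideal of $P(R)$ whose square lies in $\CM(R)T^2$. Alternatively, use the paper's observation that $O(\CM(R)T)\lhd O(P(R))$.
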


\begin{proof} To each dense orthogonal 
${\Xi=\{\xi_a\}_{a\in I}\subseteq B(R)}$ there correspond an embedding 
\[
\psi_{\Xi}\in 
\Hom\Bigl(I(P(R)), \prod\limits_{a\in I} \xi_a I(P(R))\Bigr)_{M(P(R))'}\,,\quad
\psi_{\Xi} x\,:\ a\longmapsto \xi_a x\quad 
(x\in I(P(R)),\ a\in I) 
\]
(Remark 2.2), and ${\phi_{\Xi}\in 
\Hom\Bigl(\prod\limits_{a\in I} \xi_a I(P(R)), I(P(R))\Bigr)_{M(P(R))'}}$, 
${\phi_{\Xi} \psi_{\Xi}=\Id_{I(P(R))}}$. For any 
${\{x_a\}_{a\in I}\subseteq I(P(R))}$ and 
${x\in \prod\limits_{a\in I} \xi_a I(P(R))}$, 
${x: a\longmapsto \xi_a x_a}$, ${a\in I}$, 
\[
\phi_{\Xi} (\xi_a x)\ =\ \xi_a (\phi_{\Xi} x)\ =\ 
\phi_{\Xi} \psi_{\Xi}(\xi_a x_a)\ =\ \xi_a x_a\quad (a\in I)\,,
\]
${\phi_{\Xi} x={\sum\limits_{a\in I}}^{\perp} \xi_a x_a\in I(P(R))}$. So,  
${I(P(R))=O(I(P(R)))}$, the orthogonal completion $O(S)$ is defined for 
any ${S\subseteq I(P(R))}$. By definition, $O(S)$ includes the set $\hat{S}$ 
of all ${{\sum\limits_{a\in I}}^{\perp} \xi_a x_a\in I(P(R))}$\linebreak for 
a dense orthogonal 
${\{\xi_a\}_{a\in I}\subseteq B(R)}$, ${\{x_a\}_{a\in I}\subseteq S}$. 
Since for any dense orthogonal 
$\{\xi_a\}_{a\in I}$, ${\{\xi_{a b}\}_{b\in I_a}\subseteq B(R)}$, 
${\{y_{a b}\}_{b\in I_a}\subseteq S}$, ${a\in I}$, 
${\{\xi_a \xi_{a b}\}_{a\in I,\ b\in I_a}}$ is dense and orthogonal in $B(R)$, 
${{\sum\limits_{a\in I}}^{\perp} \xi_a
\Bigl({\sum\limits_{b\in I_a}}^{\perp} \xi_{a b} y_{a b}\Bigr)=
{\sum\limits_{a\in I,\ b\in I_a}}^{\perp} \xi_a \xi_{a b} y_{a b}}$, 
${S\subseteq \hat{S}=O(\hat{S})=O(S)=O(O(S))}$.

If ${{\sum\limits_{a\in I}}^{\perp} \xi_a x_a=
{\sum\limits_{a'\in I'}}^{\perp} \xi'_{a'} x'_{a'}}$, 
${{\sum\limits_{b\in J}}^{\perp} \chi_b y_b=
{\sum\limits_{b'\in J'}}^{\perp} \chi'_{b'} y'_{b'}}$ for dense orthogonal 
${\{\xi_a\}_{a\in I}, \{\xi'_{a'}\}_{a'\in I'}}$, 
${\{\chi_b\}_{b\in J}, \{\chi_{b'}\}_{b'\in J'}\subseteq B(R), 
\{x_a\}_{a\in I}, \{x'_{a'}\}_{a'\in I'}, \{y_b\}_{b\in J}, 
\{y'_{b'}\}_{b'\in J'}\subseteq P(R)}$, then for all 
${a\in I}$, ${a'\in I', b\in J, b'\in J', \star=+, \cdot}$, 
${\xi_a \xi'_{a'} x_a=\xi_a \xi'_{a'} x_{a'}}$, 
${\chi_b \chi'_{b'} y_b=\chi_b \chi'_{b'} y'_{b'}}$, 
\begin{multline*}
{\sum_{a\in I,\ b\in J}}^{\perp} \xi_a \chi_b (x_a\star y_b) 
\ =\ 
{\sum_{a\in I,\ a'\in I',\ b\in J,\ b'\in J'}}^{\perp}
\xi_a \chi_b \xi'_{a'} \chi'_{b'} (x_a\star y_b)\ =
\\ 
{\sum_{a\in I,\ a'\in I',\ b\in J,\ b'\in J'}}^{\perp} 
\xi_a \chi_b \xi'_{a'} \chi'_{b'} (x'_{a'}\star y'_{b'})\ =\ 
{\sum_{a'\in I',\ b'\in J'}}^{\perp} \xi'_{a'} \chi'_{b'} 
(x'_{a'}\star y'_{b'})\,,
\end{multline*}
the continuation of the operations ${\star=+, \cdot}$ from $P(R)$ to $O(P(R))$ 
in p. 3 is defined correctly and, in view of 
${\alpha {\sum\limits_{a\in I}}^{\perp} \xi_a x_a=
{\sum\limits_{a\in I}}^{\perp} \xi_a \alpha x_a}$, ${\alpha\in \CM(R)}$, 
$\CM(R)$--linearly. Hence 
\[
{\sum_{a\in I}}^{\perp} \xi_a x_a\star {\sum_{b\in J}}^{\perp} \chi_a y_b\ =\ 
{\sum_{a\in I,\ b\in J}}^{\perp} \xi_a \chi_b (x_a\star x_b)\,,\quad 
\Bigl({\sum_{a\in I}}^{\perp} \xi_a x_a\Bigr) \psi\ =\ 
{\sum_{a\in I}}^{\perp} \xi_a (x_a \psi)
\]
for any dense orthogonal ${\{\xi_a\}_{a\in I}, \{\chi_b\}_{b\in J}
\subseteq B(R)}$, ${\{x_a\}_{a\in I}, \{y_b\}_{b\in J}\subseteq O(P(R))}$, 
${\star=+, \cdot}$ and ${\psi\in M(P(R))', M(O(P(R)))'}$, 
$M^{O(P(R))}(P(R))'$ is isomorphic to $M(P(R))'$ by means of 
${\psi\longmapsto \psi|_{P(R)}\in M(P(R))'}$, ${x \psi=x \psi|_{P(R)}}$, 
${\psi\in M^{O(P(R))}(P(R))'}$, ${x\in O(P(R))}$, 
$O(P(R))$ is a $M(P(R))'$--submodule of $I(P(R))$, the action $M(P(R))'$ on 
$I(P(R))$ is continued to the action $M(O(P(R)))'$:  
${x \psi={\sum\limits_{a\in I}}^{\perp} \xi_a (x (\xi_a \psi)|_{P(R)})}$ 
for all ${x\in I(P(R))}$, ${\psi\in M(O(P(R)))'}$ and a dense orthogonal 
${\{\xi_a\}_{a\in I}\subseteq B(R)}$, 
${\xi_a \psi\in M^{O(P(R))}(P(R))'}$, ${a\in I}$, 
$O(P(R))_{M(O(P(R)))'}$ (with the natural action of $M(O(P(R)))'$) is a 
submodule of $I(P(R))_{M(O(P(R)))'}$. For any ${k\geq 1}$, 
${\alpha_{a i}\in \CM(R)}$, ${\{x_{a i}\}_{a\in I}\subseteq I(P(R))}$, 
${i=1, \ldots, k}$, 
\[
{\sum_{a\in I}}^{\perp} \xi_a 
\Bigl(\sum_{i=1}^k \alpha_{a i} x_{a i}\Bigr)\ =\ 
{\sum_{a\in I}}^{\perp} \xi_a \Bigl(\sum_{i=1}^k \alpha_i x_{a i}\Bigr)\ =\ 
\sum_{i=1}^k \alpha_i {\sum_{a\in I}}^{\perp} \xi_a x_{a i}\,,
\]
where ${\alpha_i\in \CM(R)}$, ${\xi_a \alpha_i=\xi_a \alpha_{a i}}$ for all 
${a\in I}$ (completeness of $B(R)$). In particular, if any element of $P(R)$ 
is a $\CM(R)$--linear combination of $n$ elements of $R$ for some ${n\geq 1}$, 
then ${O(P(R))=\CM(R) O(R)}$. This is true, for example, in the case of finitely 
generated $\CM(R)$ over $F$ or (and) $P(R)$ over $\CM(R)$. Thus, $O(P(R))$ and 
$O(R)$ are a $\CM(R)$--algebra and its $F$--subalgebra, their semiprimeness 
(primeness for the prime $R$) follows from the semiprimeness (primeness) of 
$P(R)$, the essentiality of the $M(P(R))'$--submodule 
${P(R)\subseteq I(P(R))}$ and\linebreak  
${O(\CM(R) T)\lhd O(P(R))}$ for all ${T\lhd O(R)}$. 
Since $P(R)$ is a complete invariant $M(P(R))'$--submodule of $I(P(R))$, 
for any ${\phi\in \End(I(P(R)))_{M(P(R))'}}$, dense orthogonal 
${\{\xi_a\}_{a\in I}\subseteq B(R)}$, ${\{x_a\}_{a\in I}\subseteq P(R)}$
\[
\phi {\sum\limits_{a\in I}}^{\perp} \xi_a x_a\ =\ 
{\sum\limits_{a\in I}}^{\perp} \xi_a \phi|_{P(R)}(\xi_a x_a)\ =\ 
\phi|_{P(R)} {\sum\limits_{a\in I}}^{\perp} \xi_a x_a\,,
\]
${\phi|_{P(R)}\in \CM(R)}$, ${\phi(O(P(R)))\subseteq O(P(R))}$, 
${\phi|_{O(P(R))}=\phi|_{P(R)} \Id_{O(P(R))}}$. As a consequence, 
\[
\End(O(P(R)))_{M(P(R))'}\ =\ \End(O(P(R)))_{M(O(P(R)))'}\ =\ 
\CM(R) \Id_{O(P(R))}\,.
\]

Like many facts about the non-singular $\CM(R)$--module $I(P(R))$ (Lemma 2.3), 
p. 6 about $\Ann_{\CM(R)} S$, ${S=O(S)\subseteq I(P(R))}$, is included in the 
constructions of \cite{B6}; it suffices to cite p. 2 of Lemma 2. Applying 
Zorn's lemma, we choose the maximal element $\{x_a\}_{a\in I}$ in the  
partially ordered by inclusion set $\mathcal{T}$ of all  
${T=\{x_a\}_{a\in I_T}\subseteq S, x_a\ne 0, \beta_a \beta_{a'}=0}$ for all 
${a'\ne a\in I_T}$, where ${\Ann_{\CM(R)} x_a=(\Id_{P(R)}-\beta_a) \CM(R)}$, 
${\beta_a\in B(R)}$, 
and ${x\in S}$, ${\beta_a x=\beta_a x_a}$ 
for all ${a\in I}$ ($\{\beta_a\}_{a\in I}$ can be complemented to a dense 
orthogonal subset of $B(R)$). If there is ${y\in S}$, 
${(\Id_{P(R)}-\beta_x) y\ne 0}$,  
${\Ann_{\CM(R)} x=(\Id_{P(R)}-\beta_x) \CM(R)}$, ${\beta_x\in B(R)}$, 
then one can add 
${b\notin I}$ to $I$, ${I'=I\cup \{b\}}$, set ${x_b=(\Id_{P(R)}-\beta_x) y}$ 
and obtain 
\[
\beta_x x_a\ =\ \beta_x \beta_a x_a\ =\ \beta_x \beta_a x\ =\ \beta_a x\ =\ 
x_a\,,\quad \beta_x \beta_a\ =\ \beta_a\quad (a\in I)
\]
(${\Id_{P(R)}-\beta_x\in (\Id_{P(R)}-\beta_a) \CM(R), 
\Id_{P(R)}-\beta_x\leq \Id_{P(R)}-\beta_a, \beta_a\leq \beta_x}$), 
${x_b=(\Id_{P(R)}-\beta_x) x_b}$, ${\beta_x\leq \Id_{P(R)}-\beta_b}$, 
${\Ann_{\CM(R)} x_b=(\Id_{P(R)}-\beta_b) \CM(R)}$, 
${0=\beta_x \beta_b=\beta_a \beta_x \beta_b=\beta_a \beta_b}$ for all 
${a\in I}$ and ${\{x_a\}_{a\in I}\subsetneq \{x_c\}_{c\in I'}\in \mathcal{T}}$?! Therefore 
${\Ann_{\CM(R)} S=\Ann_{\CM(R)} x}$. 
\end{proof}

From the definition of the operations of $O(P(R))$ follows that $O(P(R))$ 
satisfies all homogeneous identities of $R$.
The orthogonal completeness over $B(R)$ of all ${S=O(S)\subseteq I(P(R))}$ in the 
sense of \cite{BMich}, Definition 2.1, follows from \cite{BMich}, Remark 8.8. 
Any ${D\in \Der_F(P(R))}$ continues to ${D\in \Der_F(O(P(R)))}$ by the rule: 
${\Bigl({\sum\limits_{a\in I}}^{\perp} \xi_a x_a\Bigr) D=
{\sum\limits_{a\in I}}^{\perp} \xi_a (x_a D)}$
for any dense orthogonal ${\{\xi_a\}_{a\in I}\subseteq B(R), 
\{x_a\}_{a\in I}\subseteq P(R)}$. As 
${[\Der_F(P(R)), B(R)]=\{0\}}$ \cite{Gol10}, Remark 2.4, the fulfillment 
of ${{\sum\limits_{a\in I}}^{\perp} \xi_a x_a={\sum\limits_{a'\in I'}}^{\perp} 
\xi'_{a'} x'_{a'}}$ for dense orthogonal 
${\{\xi_a\}_{a\in I}, \{\xi'_{a'}\}_{a'\in I'}\subseteq B(R), 
\{x_a\}_{a\in I},}$ ${\{x'_{a'}\}_{a'\in I'}\subseteq P(R)}$ is equivalent to 
${\xi_a \xi'_{a'} x_a=\xi_a \xi'_{a'} x'_{a'}}$ for all ${a\in I}$, 
${a'\in I'}$ and 
\[
(\xi_a \xi'_{a'} x_a) D\ =\ \xi_a \xi'_{a'} (x_a D)\ =\ 
(\xi_a \xi'_{a'} x'_{a'}) D\ =\ \xi_a \xi'_{a'} (x'_{a'} D)\quad 
(a\in I,\ a'\in I')\,,
\] 
${{\sum\limits_{a\in I}}^{\perp} \xi_a (x_a D)=
{\sum\limits_{a\in I'}}^{\perp} \xi'_{a'} (x_{a'} D)}$, such a continuation 
is correct. If $D$ is $\CM(R)$--linear, then its continuation to $O(P(R))$ 
is also $\CM(R)$--linear. 

Any ${B\subseteq B(R)}$ has the exact lower and upper bounds ${\nu(B)=\inf B}$ 
and ${\mu(B)=\sup B}$ in $B(R)$ \cite{BMich}, Lemma 1.16. 
If $B$ is an ultrafilter in $B(R)$, then ${\mu(B)=\Id_{P(R)}\in B}$ and\linebreak 
either ${0\ne \nu(B)\in B}$ (if ${\Id_{P(R)}-\nu(B)\in B}$,  
${0=\nu(B)=\nu(B) (\Id_{P(R)}-\nu(B))}$) or ${\nu(B)=0}$, 
${\Ann_{\CM(R)} B=\{0\}}$, ${\mu(P)=\Id_{P(R)}}$, 
${\Ann_{\CM(R)} P=\{0\}}$ for ${P=B(R)\setminus B\in \Spec(B(R))}$, the maximal 
orthogonal subsets of $P$ are dense in $B(R)$ 
\cite{BMich}, Lemmas 1.16, 1.17.

Due to the regularity of $\CM(R)$, for all ${I\lhd \CM(R)}$, ${J\lhd B(R)}$ 
\begin{gather*}
I\ =\ (I\cap B(R))_{\CM(R)}\ =\ (I\cap B(R)) \CM(R)\,,\quad 
I\cap B(R)\lhd B(R)\,,
\\
(J)_{\CM(R)}\ =\ J \CM(R)\ =\ 
\{\alpha \beta\mid \alpha\in J,\ \beta\in \CM(R)\}\,,\quad 
J\ =\ J \CM(R)\cap B(R)\,,
\end{gather*}
the proper prime ideals of $B(R)$ and $\CM(R)$ are maximal, 
$\Spec(B(R))$ and $\Spec(\CM(R))$ consist of  
${P\cap B(R), P\in \Spec(\CM(R))}$, and ${Q \CM(R), Q\in \Spec(B(R))}$,  
respectively (the observation before Remark 2.2, 
${\alpha, \beta\leq \alpha+\beta-\alpha \beta=
\alpha\oplus \beta (\alpha\oplus \Id_{P(R)})}$, ${\alpha, \beta\in B(R)}$, 
\cite{Lamb}, Propositions 2, 3, p. 60, 61).

To each ultrafilter $B$ in $B(R)$ there corresponds a congruence of the 
$M(P(R))'$--module $I(P(R))$: ${x\sim_B y, x, y\in S}$, if 
there is ${\xi\in B, \xi x=\xi y}$ \cite{BMich}, $\S 4$. 
If ${x\in I(P(R)), \alpha\in B}$, then ${\alpha x=0}$ is 
equivalent to ${x=(\Id_{P(R)}-\alpha) x}$, 
${\Id_{P(R)}-\alpha\in P=B(R)\setminus B\in \Spec(B(R))}$. As a 
consequence, $\sim_B$ is 
a comparability relation modulo the submodule $M(P(R))'$--submodule 
${P I(P(R))\subseteq I(P(R))}$. The restrictions of $\sim_B$ to the 
algebras ${S=R, P(R), O(R), O(P(R))}$ are the comparability relations 
modulo their ideals ${S\cap P S}$, where 
${P S=\{\alpha x\mid \alpha\in P,\ x\in S\}}$, ${S\cap P S=S\cap P I(P(R)), 
P S=S\cap P S}$ for ${S=O(S)}$ (${B(R) S\subseteq S}$ for  
${S=O(S)\subseteq I(P(R))}$, \cite{BMich}, 
Remark 8.8), ${S_B=S/(S\cap P S), S=R, P(R), O(R), O(P(R)), I(P(R))}$. 

If ${\nu(B)\in B}$, then ${\Id_{P(R)}-\nu(B)=\mu(P)\in P=\mu(P) B(R)}$ and  
${x+P I(P(R))\longmapsto \nu(B) x}$, ${x\in I(P(R))}$, is an isomorphism of 
$I(P(R))_B$ and $\nu(B) I(P(R))$, ${S_B\cong \nu(B) S}$, ${S=R, P(R)}$, 
$O(R)$, $O(P(R))$, $I(P(R))$. Developing \cite{B6}, Theorem 1, p. 1, \cite{BMich}, 
Theorem 8.9, p. 4, we will prove 
              
\begin{lemma}
If $B$ is an ultrafilter in $B(R)$, ${S=O(R), O(P(R))}$, ${\nu(B)\in B}$ 
or (and) there is such ${n\geq 1}$ that for any ${Q\in \Spec(S)}$ the 
elements of $M(P(S/Q))$ are finite sums of the products of at most $n$ 
operators ${t_x, t=l, r, x\in P(S/Q)}$, then the algebra $S_B$ is prime.
\end{lemma}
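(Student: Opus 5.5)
Two cases are handled separately. If $\nu(B)\in B$, then $S_B\cong\nu(B)S$ by the isomorphism preceding the lemma. So it suffices to show that $\nu(B)P(R)$ is prime; primeness of $\nu(B)S$ then follows as in the proof of Proposition 2.4, using the essentiality of $P(R)$ in $I(P(R))$ and $O(\CM(R)T)\lhd O(P(R))$.

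\emph{First case.} Write $e=\nu(B)$. Since $B$ is an ultrafilter, $e$ is an atom of $B(R)$: if $0\ne\beta\le e$ with $\beta\ne e$, then $\beta$ or $e-\beta$ lies in $B$ and lies strictly below $\nu(B)$, which is impossible. Suppose $I,J$ are ideals of $eP(R)$ with $IJ=\{0\}$ and $I,J\ne\{0\}$. Put $I'=I\cap R$ and $J'=J\cap R$; these are nonzero ideals of $R$ by essentiality. Then $\Ann_{\CM(R)}I'=(\Id-\alpha)\CM(R)$ with $\alpha\le e$ and $\alpha\ne0$ (Lemma 2.3), so $\alpha=e$, and likewise for $J'$. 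In a semiprime algebra $I'J'=\{0\}$ forces $I'\cap J'=\{0\}$, so $I'\oplus J'\oplus K$ is essential for a suitable complement $K$. The idempotent acting as $\Id$ on $I'$ and as $0$ on $J'\oplus K$ (constructed as in Remark 2.2) is then a nonzero element of $B(R)$ strictly below $e$, a contradiction.

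\emph{Second case.} Here we follow \cite{BMich}, Theorem 8.9. Suppose $\bar x,\bar y\in S_B$ with $\bar x\,M(S_B)'\,\bar y\,M(S_B)'=\{0\}$; the aim is to show $\bar x=0$ or $\bar y=0$. Lift to $x,y\in S$ and take any $Q\in\Spec(S)$. The bound $n$ means that the condition ``$(x\,M(S)')\cdot(y\,M(S)')=0$ modulo $Q$'' is expressed by the statement that all products $x\,t_1\cdots t_k\cdot y\,s_1\cdots s_m$ with $k,m\le n$ vanish. This is a Horn-type condition involving only finitely many multiplication operators of bounded length, so it is preserved under the ultraproduct-like quotient $\sim_B$. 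That quotient commutes with finite conjunctions and existential quantifiers over $S$ because $S=O(S)$ (\cite{BMich}, $\S\S$4, 8).

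Concretely: for each tuple of operator arguments $u_1,\dots,u_k,v_1,\dots,v_m\in S$, the product lies in $PS$, so some $\xi\in B$ kills it. Using orthogonal completeness (Proposition 2.4, item 2), together with a maximal orthogonal family and Lemma 2.3, we obtain idempotents $\alpha=\sup$ of the annihilating parts, with $\alpha x$ and $(\Id-\alpha)y$ generating mutually annihilating ideals of $S$. Next, apply the primeness of $P(S/Q)$ locally: by item 6 of Proposition 2.4 there is $z\in S$ realizing $\Ann_{\CM(R)}(xM(S)')$. Then either $\alpha\in B$, giving $\bar y=0$, or $\Id-\alpha\in B$, giving $\bar x=0$.

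The main obstacle is this gluing step. It needs the uniform bound $n$, so that a single element $\sum^\perp\xi_a w_a$ of $S$ witnesses the nonvanishing of the product simultaneously on a dense orthogonal family. Once that is settled, the ultrafilter dichotomy finishes the proof.
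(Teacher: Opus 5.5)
Your first case is correct and differs only slightly from the paper's argument. You use that $e=\nu(B)$ is an atom of $B(R)$ and derive a contradiction from a separating idempotent $0\ne\beta<e$. The paper instead takes ideals $K,H$ with $KH\subseteq PS$, observes $\nu(B)KH=\{0\}$, builds the separating idempotent $\beta$ from a complement, and concludes by the dichotomy $\beta\in B$ or $\Id_{P(R)}-\beta\in B$. Your reduction to $\nu(B)P(R)$ is even simpler than you state. Since $e$ is an atom, $e\xi_a=e$ for exactly one member of any dense orthogonal family, so $eO(P(R))=eP(R)$ and $eO(R)=eR$.

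The second case, however, is not proved, as you concede (``once that is settled''). The whole difficulty is to obtain a \emph{single} $\alpha\in B$ with $\alpha(x)_S(y)_S=\{0\}$ from $(x)_S(y)_S\subseteq PS$. Each individual product is killed by some element of $B$. But an element of $B$ lying below infinitely many elements of $B$ need not exist: $P=B(R)\setminus B$ is closed only under finite joins, and indeed $\nu(B)$ may be $0$. So your ``$\alpha=\sup$ of the annihilating parts'' does not land in $B$. Applying item 6 of Proposition 2.4 to $xM(S)'$ also gives nothing, since $\Ann_{\CM(R)}(xM(S)')=\Ann_{\CM(R)}x$ trivially.

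The paper fills this in three concrete steps, none of which your proposal supplies.
\begin{enumerate}
\item For each of the \emph{finitely many} pairs of word shapes $T,T'$ of length at most $n$, consider $M_{T,T'}=\{zz'\mid z\in M_T(x),\ z'\in M_{T'}(y)\}$. It satisfies $M_{T,T'}=O(M_{T,T'})\subseteq PS$, because $S=O(S)$ and fixed-shape products commute with orthogonal sums. Proposition 2.4, p.~6, then gives one $\alpha_{T,T'}\in B$ with $\alpha_{T,T'}M_{T,T'}=\{0\}$, and the finite product $\alpha=\prod_{T,T'}\alpha_{T,T'}$ lies in $B$. This is where the bound $n$ is actually used.
\item For every $Q\in\Spec(S)$, the bound on $M(P(S/Q))$ gives $((\alpha x)_S(y)_S+Q)/Q\subseteq\sum_{T,T'}\CM(S/Q)(\alpha M_{T,T'}+Q)/Q=\{0\}$. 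Hence $\alpha(x)_S(y)_S\subseteq\bigcap_{Q\in\Spec(S)}Q=\prr(S)=\{0\}$. You reduce modulo $Q$ but never intersect over $\Spec(S)$. The Horn-formula transfer you appeal to cannot replace this, since it carries statements valid in $S$ down to $S_B$, and $S$ itself is not prime.
\item From $(\alpha x)_S(y)_S=\{0\}$, construct, exactly as in your first case, a $\beta\in B(R)$ equal to the identity on $(\alpha x)_S$ and to $0$ on $(y)_S$. Applying the ultrafilter dichotomy to this $\beta$ gives $y\in PS$ or $x\in PS$.
\end{enumerate}
Your closing dichotomy is applied to the wrong idempotent. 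From $\alpha\in B$ together with a mutual annihilation of the ideals generated by $\alpha x$ and $(\Id_{P(R)}-\alpha)y$, nothing follows about $y$ modulo $PS$.
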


\begin{proof}
If ${S'=R, P(R), O(R), O(P(R))}$, ${K H\subseteq S'\cap P S'
\subsetneqq K, H\lhd S'}$, ${P=B(R)\setminus B}$, then 
${\alpha K H\ne \{0\}}$ for all ${\alpha\in B}$. Otherwise, there is 
${\alpha\in B}$, ${\alpha K H=(\alpha \CM(R) K) (\CM(R) H)=\{0\}}$,  
${T\cap P(R)\lhd P(R)}$ for ${T=\alpha \CM(R) K\cap \CM(R) H}$, 
${\{0\}=(T\cap P(R))^2=T\cap P(R)=T}$, we can complement 
$\alpha \CM(R) K$ in the $M(P(R))'$--module $O(P(R))$ to an essential 
submodule ${\alpha \CM(R) K\oplus M, \CM(R) H\subseteq M}$, and find 
${\beta\in B(R)}$, ${\beta x=x, \beta y=0}$ for all 
${x\in \alpha \CM(R) K}$, ${y\in M}$ 
(Proposition 2.4, p. 3, 4; quasi-injectivity of 
$O(P(R))_{M(P(R))'}$). So, either ${\beta\in B}$, ${\beta H=\{0\}, 
H\subseteq P S,}$ or ${\Id_{P(R)}-\beta\in B,}$ 
${(\Id_{P(R)}-\beta) \alpha K=\{0\}, K\subseteq P S}$?! 
If ${\nu(B)\in B}$, then ${\nu(B) K H=\{0\}}$ for any such ${K, H}$?! As a 
consequence, in this case $S'_B$ is prime. 

If ${n\geq 1}$ from the second condition, ${x, y\in S}$, 
${(x)_S (y)_S\subseteq P S}$, 
\begin{gather*}
M_{T, T'}\ =\ \{z z'\mid z\in M_T(x),\ z'\in M_{T'}(y)\}\quad 
(T, T'\in \mathcal{S}=\{\emptyset\}\cup \{l, r\}\cup\ldots \cup \{l, r\}^n)\,,
\\
M_{\emptyset}(z)\ =\ \{z\}\,,\ 
M_T(z)\ =\ \{z t^{(1)}_{x_1}\cdots t^{(k)}_{x_k}\mid x_i\in S\}\quad 
(z\in S,\ T=(t^{(1)}, \ldots, t^{(k)})\in \{l, r\}^k)\,,
\end{gather*}
then ${M_{T, T'}=O(M_{T, T'})\subseteq P S}$, there exists 
${\alpha_{T, T'}\in B, \alpha_{T, T'} M_{T, T'}=\{0\}}$, for all ${T, T'\in \mathcal{S}}$  
(the proof of Proposition 2.4 and p. 6; ${M_T(z)=O(M_T(z))}$) and for 
${\alpha=\prod\limits_{T, T'\in \mathcal{S}} \alpha_{T, T'}\in B}$
\begin{multline*}
(\alpha (x)_S (y)_S+Q)/Q\ =\ ((\alpha x)_S (y)_S+Q)/Q\subseteq 
(\alpha x+Q)_{P(S/Q)} (y+Q)_{P(S/Q)}\subseteq
\\ 
\sum_{T, T'\in \mathcal{S}} \CM(S/Q) (\alpha M_{T, T'}+Q)/Q\ =\ \{0\}
\quad (Q\in \Spec(S))\,,
\end{multline*}
${\alpha (x)_S (y)_S\subseteq \bigcap\limits_{Q\in \Spec(S)} Q=
\prr(S)=\{0\}}$ (the choice of $n$; Proposition 2.4, p. 3, 4, 5), 
$(x)_S\subseteq P S$ or (and) ${(y)_S\subseteq P S}$ (see the 
beginning of the proof). 
\end{proof}

Let $M_A$, $I(M_A)$ and ${P(M_A)=\End(I(M_A))_A M}$ be from the beginning 
of the section, 
${0\ne e=e^2\in Z(A)}$. If ${\phi: A\longrightarrow A'}$ is a homomorphism 
of associative rings $A$ and $A'$, then $A'$--modules can be considered as 
$A$--modules on which ${x\in A}$ acts as $\phi x$. In particular, 
$A e$--modules are $A$--modules for ${\phi=l_e=r_e}$. If 
$I_{A e}$ is an injective $A e$--module, $J$ is a right ideal\linebreak of $A$, 
${\psi\in \Hom(J, I)_A}$, then ${J=J e\oplus J(1-e), 
\psi(J (1-e))=\{0\}, \psi|_{J e}\in \Hom(J e, I)_{A e}}$, according to 
the Baer's criterion \cite{Lamb}, Lemma 1, p. 147, \cite{Fe}, 
Theorem 3.41, p. 205 
there exists\linebreak ${y=y e\in I}$, ${\psi x=\psi (x e+x (1-e))=\psi (x e)=y x e=y x}$ 
for all ${x\in J}$, $I_A$ is an injective $A$--module. If $I'_A$ is an 
injective $A$--module, $J'=J' e$ is a right ideal of $A e$, 
${\tau\in \Hom(J', I' e)_{A e}}$, then $J'$ is a right ideal of $A$, 
${\tau\in \Hom(J', I' e)_A}$, there is ${z\in I'}$, 
${\tau x=\tau (x e)=z (x e)=(z e) x}$ for all ${x\in J'}$ (Baer's criterion), 
$I' e$ is an injective $A e$--module. Since ${I(M e_A)=I(M e_A) e}$ 
(${I(M e_A) (1-e)\cap M e=\{0\}}$), $A e$--submodules of $I(M e_A)$ are 
$A$--submodules. From here, taking into account the essentiality of the 
$A e$--submodule ($A$--submodule) $M e$ in $I(M_A) e$ 
(${K\cap M e=K\cap M\ne \{0\}}$ for all ${\{0\}\ne K_A\subseteq I(M_A) e}$) 
and the quasi-injectivity of injective modules, we obtain, up to isomorphism, 
that ${I(M e_A)=I(M e_{A e})=I(M_A) e=I(I(M_A) e_{A e})}$, 
\begin{multline*}
P(M e_{A e})\ =\ P(M e_A)\ =\ P(M_A) e\ =\ \End(I(M_A))_A M e\ =
\\ 
\End(I(M_A) e)_A M e\ =\ \End(I(M_A) e)_{A e} M e\,.
\end{multline*}

\begin{lemma}
If $B$ is an ultrafilter in $B(R)$, ${\nu(B)\in B}$ or (and) there exist 
${k, n\geq 1}$ such that the elements of $M(O(P(R)))$ are the sums of $k$ 
products of at most $n$ operators $t_x$, 
${t=l, r}$, ${x\in O(P(R))}$, then $O(P(R))_B$ is an essential 
$M(O(P(R)))'_B$--submodule of $I(P(R))_B$ and 
${Z(M(O(P(R))_B)')\cong Z(M(O(P(R)))')_B\cong \CM(R)/P \CM(R)}$, 
${P=B(R)\setminus B}$.
\end{lemma}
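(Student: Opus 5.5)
We treat the two hypotheses separately but aim at the same two facts: essentiality of $O(P(R))_B$ in $I(P(R))_B$, and identification of the centroid of $O(P(R))_B$.

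\emph{Case $\nu(B)\in B$.} Here everything reduces to the idempotent $e=\nu(B)$. By the remark before Lemma 2.5, $S_B\cong eS$ for $S=O(P(R)),I(P(R))$, and $M(O(P(R)))'_B$ acts as $eM(O(P(R)))'$. The computation preceding the statement (for $0\ne e=e^2\in Z(A)$, with $A=M(O(P(R)))'$, $e\in\CM(R)=Z(M(P(R))')$ extended to $O(P(R))$ by Proposition 2.4, p. 4) gives $I(eO(P(R)))=eI(P(R))$ and $P(eO(P(R)))=eO(P(R))$ as $Ae$--modules. Hence $eO(P(R))$ is essential and quasi-injective in $eI(P(R))$. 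For the centroid, $\End(eO(P(R)))_{Ae}$ is $e$ times the centroid, namely $e\CM(R)\Id$ by Proposition 2.4, p. 4. Since $P=(\Id-e)B(R)$, we get $P\CM(R)=(\Id-e)\CM(R)$ and $e\CM(R)\cong\CM(R)/P\CM(R)$. The middle isomorphism $Z(M(O(P(R)))')_B\cong e\,Z(M(O(P(R)))')$ is immediate.

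\emph{Case of the bounded-length hypothesis on $M(O(P(R)))$.} Here we argue as in the second half of the proof of Lemma 2.5.

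\emph{Essentiality.} Take $x\in I(P(R))$ with $x\notin PI(P(R))$, so that $\alpha x\ne0$ for every $\alpha\in B$. Since $P(R)$, and hence $O(P(R))$, is essential in $I(P(R))$, we want $\psi\in M(O(P(R)))'$ with $0\ne x\psi\in O(P(R))$ and moreover $x\psi\notin PI(P(R))$. Let $W=\{\psi\mid x\psi\in O(P(R))\}$, and for each $\psi$ let $\beta_\psi$ be the idempotent with $\Ann_{\CM(R)}(x\psi)=(\Id-\beta_\psi)\CM(R)$ (Lemma 2.3). The set $x W\cap O(P(R))$ is orthogonally complete, because $x(\sum^\perp\xi_a\psi_a)=\sum^\perp\xi_a x\psi_a$. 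The finiteness hypothesis is needed precisely so that $\sum^\perp\xi_a\psi_a$ is again an element of $M(O(P(R)))'$: each $\psi_a$ is a sum of $k$ products of at most $n$ operators, and gluing the operator arguments coordinatewise by orthogonal completeness of $O(P(R))$ gives one such expression. Then Proposition 2.4, p. 6 yields a single $y=x\psi_0$ with $\Ann_{\CM(R)} y=\Ann_{\CM(R)}(xW\cap O(P(R)))$.

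If $\beta_{\psi_0}\in P$, then $(\Id-\beta_{\psi_0})\in B$ annihilates $xW\cap O(P(R))$. Now $(\Id-\beta_{\psi_0})x\ne0$, so by essentiality some $\psi$ gives $0\ne(\Id-\beta_{\psi_0})x\psi\in O(P(R))$, which is a contradiction. Hence $y\notin PI(P(R))$, and $O(P(R))_B$ is essential.

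\emph{Centroid.} The map $\CM(R)\to\End(O(P(R))_B)$ has kernel $P\CM(R)$: if $\gamma O(P(R))\subseteq PO(P(R))$, then Proposition 2.4, p. 6 gives a single $x$ with $\gamma x\in P\cdot$, so $\alpha\gamma x=0$ for some $\alpha\in B$, and therefore $\alpha\gamma=0$ and $\gamma\in P\CM(R)$. For surjectivity, take $\phi$ in the centroid of $O(P(R))_B$. Using essentiality and the quasi-injectivity of $I(P(R))_B$ over $M'_B$, we extend $\phi$ and lift it to $I(P(R))$. This last step is the main obstacle, because injectivity does not obviously pass to quotients by $PI(P(R))$. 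I would instead lift $\phi$ on generators: choose representatives $x\mapsto y$ and glue these by orthogonal completeness into a genuine element of $\End(O(P(R)))_{M(O(P(R)))'}=\CM(R)\Id$. The bounded-length hypothesis ensures that $M(O(P(R)))'_B$ is the image of $M(O(P(R)))'$, so commutation conditions lift after multiplying by some $\xi\in B$. Once the centroid is identified with $\CM(R)/P\CM(R)$, the equalities $Z(M(\cdot)')=$ centroid, valid here by Proposition 2.4, p. 4 and its image modulo $P$, give the stated isomorphisms.
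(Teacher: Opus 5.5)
Your handling of the case $\nu(B)\in B$ is sound: you reduce to $e=\nu(B)$, use $S_B\cong eS$, show that the centroid of $eO(P(R))$ is $e\CM(R)\Id$, and sandwich $Z(M(eO(P(R)))')$ between $e\CM(R)$ and that centroid. Your essentiality argument under the bounded-length hypothesis is essentially the paper's. The hypothesis glues $\{\psi_a\}$ into one operator, so $xM(O(P(R)))'\cap O(P(R))$ is orthogonally complete, and then Proposition 2.4, p. 6 applies.

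The gap is in the centre statement in the second case. You aim for the stronger claim that the whole centroid $\End(O(P(R))_B)_{M(O(P(R)))'_B}$ is the image of $\CM(R)$, and your surjectivity step fails.

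\begin{itemize}
\item A pointwise choice of representatives $x\mapsto y$ is neither additive nor $M(O(P(R)))'$-equivariant.
\item Each relation $\phi(x\mu)=\phi(x)\mu$ holds only modulo $P\,O(P(R))$. So it is killed only by some element of $B$ that depends on the pair $(x,\mu)$.
\item Orthogonal gluing needs exact data on the blocks of a dense orthogonal family, and a single ultrafilter does not supply it.
\end{itemize}

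So the ``genuine element of $\End(O(P(R)))_{M(O(P(R)))'}$'' is asserted, not constructed. Also, the surjection $M(O(P(R)))'\to M(O(P(R))_B)'$ is automatic, so that is not what the bounded-length hypothesis is for.

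The missing idea is to stay inside the multiplication algebra rather than pass to the centroid of the quotient. The paper argues as follows.

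\begin{enumerate}
\item First show $\ker(M(O(P(R)))'\to M(O(P(R))_B)')=P\,M(O(P(R)))'$. If $(O(P(R)))\phi\subseteq P\,O(P(R))$, this set is orthogonally complete, because $\phi$ commutes with $B(R)$. So one $\alpha\in B$ kills it, and $\phi=(\Id-\alpha)\phi$.
\item Now suppose the image of $\phi$ is central. Then $(O(P(R)))[\phi,M(O(P(R)))]\subseteq P\,O(P(R))$.
\item This set is orthogonally complete by exactly the bounded-length gluing you already used, applied to both the elements $s_a$ and the operators $\mu_a$.
\item Proposition 2.4, p. 6 gives $\gamma\in B$ annihilating the set. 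Hence $\gamma\phi\in Z(M(O(P(R)))')=\CM(R)\Id_{O(P(R))}$ by Proposition 2.4, p. 4.
\item Since $\phi\equiv\gamma\phi$ modulo $P\,M(O(P(R)))'$, the centre $Z(M(O(P(R))_B)')$ is the image of $\CM(R)\Id$. By your own kernel computation this image is $\CM(R)/P\CM(R)$, and it also equals $Z(M(O(P(R)))')_B$.
\end{enumerate}

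This proves the lemma without determining the full centroid of $O(P(R))_B$, which your tools do not reach.
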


\begin{proof}
The canonical epimorphism ${\psi_B: O(P(R))\longrightarrow O(P(R))_B}$ induces 
an epimorphism ${\psi_{M, B}: M(O(P(R)))'\longrightarrow M(O(P(R))_B)', 
\psi_{M, B} \Id_{O(P(R))}=\Id_{O(P(R))_B}, 
\psi_{M, B} t_x=t_{\psi_B x}}$ for all ${t=l, r}$, ${x\in O(P(R))}$, 
\begin{multline*}
\Ker \psi_{M, B}\ =\ \Ann_{M(O(P(R)))'} O(P(R))_B\ =
\\ 
\{\phi\in M(O(P(R)))'\mid (O(P(R)))\phi=O((O(P(R)))\phi)
\subseteq P O(P(R))\}\ =\ P M(O(P(R)))'
\end{multline*}
(there is ${\alpha\in B}$, ${\alpha (O(P(R)))\phi=\{0\}}$, ${\alpha \phi=0}$, 
${\phi\in P M(O(P(R)))'}$ (Proposition 2.4 and its proof)),
${M(O(P(R))_B)'\cong M(O(P(R)))'_B=M(O(P(R)))'/P M(O(P(R)))'}$. 
The structure of the $M(O(P(R)))'$--module is introduced on $I(P(R))$ in the 
proof of Proposition 2.4. When the second condition is satisfied for any dense 
orthogonal ${\{\xi_a\}_{a\in I}\subseteq B(R)}$, 
${\{x_a\}_{a\in I}\subseteq I(P(R))}$, 
${\{\phi_a\}_{a\in I}\subseteq M(O(P(R)))'}$, writing 
\[
\phi_a\ =\ \tau_a \Id_{O(P(R))}+
\sum_{j=1}^n \sum_{i=1}^k \sum_{T=(t^{(1)}, \ldots, t^{(j)})\in 
\{l, r\}^j} t^{(1)}_{x(a, i, T, 1)}\cdots t^{(j)}_{x(a, i, T, j)}
\]
for ${\tau_a\in \CM(R)}$, ${x(a, i, T, j)\in O(P(R)), a\in I}$, where the sum 
of ${1+k (2^{n+1}-2)}$ terms has at most ${k+1}$ non-zero terms, we get 
\[
{\sum_{a\in I}}^{\perp} \xi_a (x_a \phi_a)\ =\ 
\tau x+
\sum_{j=1}^n \sum_{i=1}^k \sum_{T=(t^{(1)}, \ldots, t^{(j)})\in 
\{l, r\}^j} x t^{(1)}_{x(i, T, 1)}\cdots t^{(j)}_{x(i, T, j)}\,,
\]
${\tau\in \CM(R)}$, ${\xi_a \tau=\xi_a \tau_a, a\in I}$ (Lemma 2.3), 
${x={\sum\limits_{a\in I}}^{\perp} \xi_a x_a}$, 
${x(i, T, l)={\sum\limits_{a\in I}}^{\perp} \xi_a x(a, i, T, l)}.$ 
In particular, it follows that ${x M(O(P(R)))'=O(x M(O(P(R)))')}$, 
\[
(S)[\phi, M(O(P(R)))]\ =\ 
\bigcup\limits_{s\in S} s [\phi, M(O(P(R)))]\ =\ O((S)[\phi, M(O(P(R)))])
\]
for all ${x\in I(P(R))}$, ${S=O(S)\subseteq I(P(R))}$, ${\phi\in M(O(P(R)))'}$.

If ${x\in I(P(R))}$, ${x M(O(P(R)))'\cap O(P(R))\subseteq P I(P(R))}$, then for 
${\beta=\nu(B)\in B}$ for the first condition and some ${\beta\in B}$ 
for the second ${\beta x M(O(P(R)))'\cap O(P(R))=\{0\}}$, since for the latter 
${x M(O(P(R)))'\cap O(P(R))=O(x M(O(P(R)))'\cap O(P(R)))}$, 
\[
\beta x M(O(P(R)))'\cap O(P(R))\subseteq \beta (x M(O(P(R)))'\cap O(P(R)))\ =\ 
\{0\}\,,
\]
(Proposition 2.4, p. 2, 6). In any case, ${\beta x=0}$, ${x\in P I(P(R))}$. 
Hence 
\[
(x M(O(P(R)))')_B\cap O(P(R))_B\ =\ (x+P I(P(R))) M(O(P(R)))'_B\cap O(P(R))_B
\ \ne\ \{0\}
\] 
for all ${x\notin P I(P(R))}$. If ${\phi\in M(O(P(R)))'}$ and 
${\psi_{M, B} \phi\in Z(M(O(P(R))_B)')}$, then  
\[
(O(P(R)))[\phi, M(O(P(R)))]\subseteq P O(P(R))\,,\quad 
\gamma (O(P(R)))[\phi, M(O(P(R)))]\ =\ 
\{0\}
\]
with ${\gamma=\nu(B)}$ for the first condition and some ${\gamma\in B}$ for 
the second, ${\gamma \phi\in Z(M(O(P(R)))')}$, 
\begin{multline*}
\phi+P M(O(P(R)))'\in (Z(M(O(P(R)))')+P M(O(P(R)))')/P M(O(P(R)))'\cong
\\
Z(M(O(P(R))))'_B\cong \psi_{M, B}(Z(M(O(P(R)))'))
\end{multline*}
(Proposition 2.4, p. 2, 6). Since 
${\psi_{M, B}(Z(M(O(P(R)))'))\subseteq Z(M(O(P(R))_B)')}$, 
\begin{multline*}
Z(M(O(P(R))_B)')\ =\ \psi_{M, B}(Z(M(O(P(R)))'))\ =
\\ 
\psi_{M, B}(\CM(R) \Id_{O(P(R))})\cong \CM(R)/P \CM(R)
\end{multline*}
(Proposition 2.4, p. 4).

For ${\nu(B)\in B}$, the essentiality of the $M(R)'_B$--submodule 
${R_B\subseteq P(R)_B}$ and $M(P(R))'_B$--submodule 
${P(R)_B\subseteq I(P(R))_B}$ together with   
\[
Z(M(R_B)') \cong Z(M(R)')_B\,,\quad 
Z(M(P(R)_B)')\cong Z(M(P(R))')_B\cong \CM(R)/P \CM(R)
\]
(${M(R)'_B=M(R)'/(M^{P(R)}(R)'\cap P M^{P(R)}(R)')|_R}$) is established in a 
similar way. In view of ${S_B\cong \nu(B) S}$, ${S=R, P(R), O(R), 
O(P(R)), I(P(R))}$, lemma 2.5 and the observations before Lemma 2.6, up to 
isomorphism ${P(R)_B=P(R_B), I(P(R))_B=I(P(R)_B)}$, 
\begin{multline*}
\End(\nu(B) P(R))_{M(R)'}\ =\ \CM(R)|_{\nu(B) P(R)}\ =
\\ 
\End(\nu(B) P(R))_{\nu(B) M(R)'}\cong 
\nu(B) \CM(R)\cong \CM(R_B)
\end{multline*}
and, as a consequence, ${O(R)_B=O(R_B), O(P(R))_B=O(P(R)_B)=O(P(R_B))}$,
\begin{multline*}
\End(\nu(B) O(P(R)))_{M(P(R))'}\ =\ 
\End(O(P(R)))_{M(P(R))'}|_{\nu(B) O(P(R))}\ =
\\ 
\CM(R) \Id_{\nu(B) O(P(R))}\ =\ 
\End(\nu(B) O(P(R)))_{\nu(B) M(P(R))'}\ =
\\  
\End(\nu(B) O(P(R)))_{M(O(P(R)))'}\ =\ 
\End(\nu(B) O(P(R)))_{\nu(B) M(O(P(R)))'}\cong
\\ 
\End(O(P(R_B)))_{M(P(R_B))'}\ =\ \End(O(P(R_B)))_{M(O(P(R_B)))'}\cong 
\nu(B) \CM(R)
\end{multline*}
(Proposition 2.4, p. 4; quasi-injectivity of $P(R)_{M(R)'}$ and 
$O(P(R))_{M(P(R))'}$, complete in-\linebreak variance of ${\nu(B) P(R)_{M(R)'}\subseteq 
P(R)}$ and ${\nu(B) O(P(R))_{M(P(R))'}\subseteq O(P(R))}$). 
\end{proof}

If $F\langle X\rangle$ is a free non-associative $F$--algebra with a set of 
free generators ${X=\{x_i\}_{i=1}^{\infty}}$, $x_1$ is included in all 
non-zero monomials of ${0\ne f(x_1, \ldots, x_n)\in F\langle X\rangle}$, 
then the absence of ${0\ne x\in R=P(R)}$, ${f(x, y_2, \ldots, y_n)=0}$ 
for all ${y_i\in R}$, is inherited by $O(R)$ and $O(R)_B$ for any ultrafilter 
$B$ in $B(R)$. It is enough to note that for a dense orthogonal 
${\{\xi_a\}_{a\in I}\subseteq B(R)}$, ${\{x_a\}_{a\in I}\subseteq R}$, 
${x={\sum\limits_{a\in I}}^{\perp} \xi_a x_a\in O(R)}$, from  
${f(x, z_2, \ldots, z_n)=0}$ for all ${z_i\in O(R)}$ it follows that 
${\xi_a f(x, y_2, \ldots, y_n)=f(\xi_a x_a, y_2, \ldots, y_n)=0}$ for all 
${y_i\in R}$, ${\xi_a x_a=0}$ for all ${a\in I}$, ${x=0}$. Hence if 
${J=\{f(x, y_2, \ldots, y_n)\mid y_i\in O(R)\}\subseteq P O(R), 
P=B(R)\setminus B}$, then ${J=O(J)}$, ${\alpha J=\{0\}}$ for some 
${\alpha\in B}$ (Proposition 2.4, p. 6), ${\alpha x=0}$, 
${x\in P O(R)}$. For a homogeneous variety of $F$--algebras $\mathfrak{M}$ 
on which the non-degeneracy condition is defined 
(alternative, right-alternative, linear Jordan (over $F$ with $1/2$), 
Lie and Mal'tsev algebras), the non-degeneracy of ${P(R)\in \mathfrak{M}}$ 
is inherited by ${R, O(S), O(S)_B\in \mathfrak{M}, S=R, P(R)}$, for all ultrafilters $B$ in 
$B(R)$, where non-degeneracy in the right-alternative, linear Jordan and Lie 
(Mal'tsev without $2$--torsion) case is the absence of ${0\ne x\in R}$ under 
the above condition for ${f(x_1, x_2)=(x_1 x_2) x_1}$, 
$f(x_1, x_2)=2 x_1 (x_1 x_2)-x_1^2 x_2$ and, accordingly, 
${f(x_1, x_2, x_3, x_4)=(x_2 x_1) x_1+((x_3 x_1) x_4) x_1}$ 
($f(x_1, x_2)=(x_2 x_1) x_1$). If non-degenerate algebras from $\mathfrak{M}$ 
are subdirect products of non-degenerate prime (strongly prime) algebras, then 
from Lemma 2.5 we can deduce the strong primeness of $O(R)_B$ for any 
ultrafilter $B$ in $B(R)$ and non-degenerate ${P(R)\in \mathfrak{M}}$ with 
${\nu(B)\in B}$ or (and) in the presence of ${n\geq 1}$ such that the elements  
$M(P(O(R)/Q))$ are finite sums of products of at most $n$ operators 
${t_x, t=l, r, x\in P(O(R)/Q)}$ for all ${Q\lhd O(R)}$, 
$O(R)/Q$ is strongly prime.

If the non-degenerate algebras from $\mathfrak{M}$ form a semisimple class and 
the upper (strongly degenerate) radical determined by them is hereditary on 
subalgebras, the non-degeneracy of ${R\in \mathfrak{M}}$ 
inherits ${P(R)\in \mathfrak{M}}$. In particular, this is true for the 
Slin'ko --- McCrimmon (McCrim-\linebreak mon) radical $Mc$ of alternative 
(right-alternative, linear Jordan over $F$ with $1/2$) algebras and the 
Kostrikin radical $K$ of Lie and Mal'tsev algebras over fields of 
characteristic zero \cite{Gol10}, Remark 3.8, \cite{ZelM}, \cite{Gol5}, 
Lemma 2.8. Below it will be shown that $P(R)$ also inherits the 
non-degeneracy of the Lie algebra $R$ without $6$--torsion. 

Examples of algebras $R$ such that for some ${n\geq 1}$ the elements of 
$M(R)$ are finite sums of the products of at most $n$ operators 
${t_x, t=l, r, x\in R}$, are any algebra $R$ over a field 
$\mathbb{F}$, ${k=\dim_{\mathbb{F}} R< \infty}$ with ${n\leq \max\{1, k^2\}}$,  
the associative algebra $A$ with ${n=2}$, 
\[
M(A)\ =\ \biggl\{l_x+r_y+\sum_{i=1}^k l_{x_i} r_{y_i}\biggl| 
x, y, x_i, y_i\in A,\ k\geq 1\biggr\}\,,
\]
the Jordan algebra ${\Jr(V, f)}$ of the symmetric bilinear form $f$ on the 
vector space $V$ over the field $\mathbb{F}$ with ${n=4}$, where 
${\Jr(V, f)=\mathbb{F}\cdot 1\oplus V}$ is the direct sum of the spaces 
${\mathbb{F}=\mathbb{F}\cdot 1}$ and $V$ with 1 and multiplication 
\[
(\alpha\cdot 1+x)\cdot (\beta\cdot 1+y)\ =\ (\alpha \beta+f(x, y))\cdot 1+
(\beta x+\alpha y)\quad (\alpha, \beta\in \mathbb{F},\ x, y\in V)\,,
\]
and, in particular, ${(\alpha\cdot 1+x) y=f(x, y)\cdot 1+\alpha y}$. As a 
consequence, for all ${y_i\in V}$
\begin{multline*}
(\alpha\cdot 1+x) l_{y_1} l_{y_2}\ =\ \alpha f(y_1, y_2)\cdot 1+f(x, y_1) y_2\,,
\ 
(\alpha\cdot 1+x) [l_{y_1}, l_{y_2}]\ =\ f(x, y_1) y_2-f(x, y_2) y_1\,,
\\
\shoveleft{
(\alpha\cdot 1+x) (l_{y_1} l_{y_2} l_{y_3}-f(y_1, y_2) l_{y_3})\ =\ 
f(x, y_1 f(y_2, y_3)-y_3 f(y_1, y_2))\cdot 1\ =}
\\
(\alpha\cdot 1+x) [l_{y_1}, l_{y_3}] l_{y_2}\,,\quad  
l_{y_1} l_{y_2} l_{y_3}\ =\ [l_{y_1}, l_{y_3}] l_{y_2}+f(y_1, y_2) l_{y_3} 
\end{multline*}
and, due to ${[l_{y_1}, l_{y_2}] l_{y_3} [l_{y_4}, l_{y_5}]=0,
[l_x, l_y]\in \Der(J), x, y\in J}$ for any linear Jordan algebra $J$,
\begin{multline*}
l_{y_1} l_{y_2} l_{y_3} l_{y_4} l_{y_5}\ =\ 
[l_{y_1}, l_{y_3}] [l_{y_2}, l_{y_5}] l_{y_4}+
f(y_1, y_2) l_{y_3} l_{y_4} l_{y_5}+
f(y_2, y_4) [l_{y_1}, l_{y_3}] l_{y_5}\ =
\\
[l_{y_1}, l_{y_3}] l_{y_4 [l_{y_5}, l_{y_2}]}+
f(y_1, y_2) l_{y_3} l_{y_4} l_{y_5}+
f(y_2, y_4) [l_{y_1}, l_{y_3}] l_{y_5}\,,
\end{multline*}
${M(\Jr(V, f))=\Bigl\{\sum\limits_{i=1}^k 
l_{x_{1 i}} l_{x_{2 i}} l_{x_{3 i}} l_{x_{4 i}}\Bigl| x_{j i}\in 
\Jr(V, f),\ k\geq 1\Bigr\}}$. From here and lemma 2.5 we obtain 

\begin{co}
If $B$ is an ultrafilter in $B(R)$, ${S=O(R), O(P(R))}$, ${\nu(B)\in B}$ or 
(and) ${\sup\limits_{S\ne Q\in \Spec(S)} \dim_{\CM(S/Q)} P(S/Q)< \infty}$, 
then the algebra $S_B$ is prime.
\end{co}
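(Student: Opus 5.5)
The plan is to reduce the corollary to Lemma 2.5. The case ${\nu(B)\in B}$ is literally the first alternative of that lemma, so there is nothing to prove there. It remains to treat the case ${d=\sup\limits_{S\ne Q\in \Spec(S)} \dim_{\CM(S/Q)} P(S/Q)< \infty}$. For this case it suffices to find one ${n\geq 1}$ such that, for every ${Q\in \Spec(S)}$, the elements of $M(P(S/Q))$ are finite sums of products of at most $n$ operators ${t_x}$, ${t=l, r}$, ${x\in P(S/Q)}$. Lemma 2.5 then yields the primeness of $S_B$.

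For ${Q=S}$ the algebra ${S/Q}$ is zero, so $P(S/Q)$ and $M(P(S/Q))$ are trivial and the condition holds for every $n$. Let ${Q\ne S}$ be a prime ideal. Then ${S/Q}$ is prime, so $\CM(S/Q)$ is a field and ${P(S/Q)}$ is an algebra over this field of dimension ${k\leq d}$ (see the facts on prime algebras before Proposition 2.1). I would then invoke the example listed just before the corollary: for an algebra over a field $\mathbb{F}$ with ${k=\dim_{\mathbb{F}} R<\infty}$, one can take ${n\leq \max\{1, k^2\}}$. The uniform choice is therefore ${n=\max\{1, d^2\}}$.

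The step that needs justification is that example itself, and I would prove it as follows. $M(P(S/Q))$ is the associative subalgebra of ${\End_{\mathbb{F}}(P(S/Q))}$ generated by the operators $t_x$. Since ${\dim_{\mathbb{F}}\End_{\mathbb{F}}(P(S/Q))=k^2}$, this subalgebra is a subspace of dimension at most $k^2$. Let $W_j$ be the $\mathbb{F}$--span of all products of at most $j$ operators $t_x$. These spaces form an increasing chain ${W_1\subseteq W_2\subseteq\cdots}$ with ${W_{j+1}=W_j+W_j W_1}$. Once ${W_j=W_{j+1}}$, the chain stabilizes, because ${W_{j+2}=W_{j+1}+W_{j+1}W_1=W_j+W_jW_1=W_{j+1}}$. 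The dimensions grow strictly until stabilization and are bounded by $k^2$, so stabilization happens by ${j=k^2}$ (or at ${j=1}$ if ${k\le 1}$). Hence ${M(P(S/Q))=W_{\max\{1,k^2\}}}$.

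One subtlety is that $W_j$ is an $\mathbb{F}$--span, whereas Lemma 2.5 asks for sums of products of operators $t_x$. This is harmless: scalars from ${\mathbb{F}=\CM(S/Q)}$ can be absorbed into one factor, since ${\lambda t_x=t_{\lambda x}}$ by the $\CM$--bilinearity of the multiplication in the central closure. Products of fewer than $n$ operators are already allowed by the phrase "at most $n$". With this, the hypothesis of Lemma 2.5 holds with ${n=\max\{1, d^2\}}$ uniformly over $\Spec(S)$, which concludes the argument.
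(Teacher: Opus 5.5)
Your proposal is correct and follows the paper's own route. The case $\nu(B)\in B$ is the first alternative of Lemma 2.5. Under the finite-dimensionality hypothesis, the uniform bound $n=\max\{1,d^{2}\}$ comes from the finite-dimensional example listed just before the corollary, and the second alternative of Lemma 2.5 then applies. The paper states that example ($n\leq \max\{1,k^{2}\}$ for $\dim_{\mathbb{F}}R=k$) without proof. Your stabilization argument for the chain $W_1\subseteq W_2\subseteq\cdots$ in $\End_{\mathbb{F}}(P(S/Q))$, together with absorbing $\CM(S/Q)$-scalars via $\lambda t_x=t_{\lambda x}$, simply fills in that omitted step.
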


This is true, for example, for the generalized special Lie algebra $R$ and the 
subalgebra $R$ of the algebra $A^{(+)}$ of the associative $PI$--algebra $A$ 
over $F$ with $1/2$, where $A^{(+)}$ is a linear Jordan algebra obtained from 
$A$ by replacing the multiplication operation with 
${x\cdot y=1/2(x y+y x), x, y\in A}$ \cite{Raz}, Theorem 4.1, p. 47 and 
its corollaries in \cite{BP}, \cite{Gol6}, Remark 3.5. 

If $R$ is a strongly prime non-associative alternative algebra or a strongly 
prime linear Jordan $PI$--algebra over $F$ with $1/2,$ then 
${Z(R)\ne \{0\},}$ the ring of fractions ${Q=S^{-1} R\cong P(R)}$ re\-lative to 
${S=Z(R)\setminus \{0\}}$ is a central simple algebra over the field 
${Z(Q)=S^{-1} Z(R)\cong \CM(R),}$ $Q$ in the alternative 
case is a Cayley --- Dickson algebra over $Z(Q)$, ${\dim_{Z(Q)} Q=8}$, and in 
the Jordan case is isomorphic to one of the algebras: 
\begin{enumerate}

\item $A^{(+)}$, $A$ is a simple associative $PI$--algebra over 
${Z(Q)=Z(A^{(+)})=Z(A)}$,\\ ${\dim_{Z(Q)} A\leq [\frac{n}2]^2}$;

\item ${\Sym(A, *)=\{x+x^*\mid x\in A\}}$ is a subalgebra of 
$*$--symmetric elements of ${A^{(+)}, A}$ is a simple associative 
$PI$--algebra over ${Z(Q)=Z(\Sym(A, *))=Z(A)\cap \Sym(A, *)}$ 
with involution $*$, ${\dim_{Z(Q)} A\leq 2 \dim_{Z(A)} A\leq 2 n^2}$;

\item an Albert algebra over $Z(Q)$, ${\dim_{Z(Q)} Q=27}$; 

\item ${\Jr(V, f)}$ is the Jordan algebra of the non-degenerate bilinear 
symmetric form $f$ on the $Z(Q)$--space $V$, ${\dim_{Z(Q)} V> 1}$,

\end{enumerate}
$n$ is the least degree of an essential Jordan identity of $R$ 
\cite{ZShS}, Theorem 9, p. 229, \cite{ZelP}, Theorems 4, 5, \cite{Am1}, 
Theorem 5, \cite{BM}, Theorems 3.16, 3.19, \cite{BM1}, Theorems 3.5, 3.7, 
\cite{Gol4}, the observations before Remark 2.1. Hence for alternative and 
linear Jordan $PI$--algebras, Lemma 2.8 follows immediately from the remarks 
made above. 

\begin{lemma}
If a non-degenerate algebra $R$ is alternative (linear Jordan over $F$ with 
$1/2$), $B$ is an ultrafilter in $B(R)$, then the algebras ${O(R)_B, 
O(P(R))_B}$ are strongly prime.
\end{lemma}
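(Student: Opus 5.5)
The plan is to reduce strong primeness of $S_B$, ${S=O(R), O(P(R))}$, to two facts: $S_B$ is non-degenerate, and $S_B$ is prime. We then invoke the standard fact that for alternative and linear Jordan algebras a non-degenerate prime algebra is strongly prime.

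\emph{Non-degeneracy.} Since $R$ is non-degenerate, $P(R)$ is non-degenerate. For alternative algebras this is the heredity of the Slin'ko--McCrimmon radical on subalgebras; in the Jordan case over $F$ with $1/2$ it is the McCrimmon radical (\cite{Gol10}, Remark 3.8, \cite{ZelM}). Then the remark before Corollary 2.7 applies with ${f(x_1, x_2)=(x_1 x_2) x_1}$ (alternative) or ${f(x_1, x_2)=2 x_1 (x_1 x_2)-x_1^2 x_2}$ (Jordan), i.e.\ ${f(x_1,x_2)=x_1 U_{x_2}}$ up to sign conventions. It gives non-degeneracy of $O(S)$ and $O(S)_B$ for ${S=R, P(R)}$ and every ultrafilter $B$. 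The key point there is that ${J=\{f(x, y)\mid y\in O(R)\}}$ is orthogonally complete. Hence ${J\subseteq P O(R)}$ forces ${\alpha J=\{0\}}$ for some ${\alpha\in B}$ by Proposition 2.4, p.~6, so ${\alpha x=0}$.

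\emph{Primeness.} Non-degenerate alternative and linear Jordan algebras are subdirect products of strongly prime ones (Zel'manov, Slin'ko). So ${\prr}$-type arguments apply with $Q$ ranging over the ideals $Q$ of $S$ with $S/Q$ strongly prime, whose intersection is zero. Rereading the proof of Lemma 2.5, only the vanishing of such an intersection and a uniform bound $n$ on the length of products of multiplication operators in $M(P(S/Q))$ are used. For strongly prime $S/Q$ we use the structure theory recalled just before the statement.

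\begin{itemize}
\item If $S/Q$ is associative, $n=2$.
\item If $S/Q$ is a non-associative alternative algebra, ${P(S/Q)}$ is Cayley--Dickson of dimension $8$, so $n\le 64$.
\item If $S/Q$ is Jordan and $PI$, ${P(S/Q)}$ is of type $A^{(+)}$ or ${\Sym(A,*)}$ (special, with the operators expressible through $l,r$ of $A$ in bounded length), Albert ($\dim 27$), or ${\Jr(V,f)}$ (with $n=4$, computed above).
\item If $S/Q$ is Jordan and not $PI$, Zel'manov's theorem says it is special of hermitian type, ${P(S/Q)}\cong H(A,*)$ ideal-wise. The bound on products of $U$- and $l$-operators in $M(H(A,*))$ comes from the associative envelope, as for $A^{(+)}$, where ${l_x=\frac12(l^A_x+r^A_x)}$ and ${M}$ lies in the span of $l^A_a r^A_b$.
\end{itemize}

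Thus a single $n$ (for instance $n=64$) works for all $Q$. Lemma 2.5 then yields primeness of $S_B$ whether or not ${\nu(B)\in B}$.

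\emph{Main obstacle.} The hardest step is justifying the uniform $n$ in the non-$PI$ Jordan (hermitian) case. Expressing elements of $M(P(S/Q))$ via the associative envelope produces associative operators, not Jordan ones. If this fails, I would instead redo the beginning of the proof of Lemma 2.5 with the ideals ${(x)_S}$ replaced by quadratic ideals / inner ideals ${x U_S}$. Strong primeness only requires, for ${x,y\notin PS}$, some ${z}$ with ${x U_z y \notin P S}$ (resp.\ ${(x z) y}$-type products in the alternative case). The sets ${\{x U_z y\mid z\in S\}}$ are orthogonally complete because they are images of a single bilinear-in-$z$ expression under $\sum^\perp$ (as in the proof of Lemma 2.6). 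So if they all lie in $PS$, some ${\alpha\in B}$ kills them. Then ${\alpha x U_S \alpha y=\{0\}}$ in $S$, and strong primeness of the factors $S/Q$ gives ${\alpha x\in Q}$ or ${\alpha y\in Q}$ for each $Q$. A complementary idempotent argument as in Lemma 2.5 (choosing ${\beta\in B(R)}$ separating the annihilators via Lemma 2.3) then puts $x$ or $y$ into $PS$.
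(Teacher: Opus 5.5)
Your non-degeneracy step is fine. The gap is in the primeness step as you set it up: the uniform bound $n$ is not justified in the Jordan case.

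Lemma~2.5 needs a single $n$ such that every element of $M(P(S/Q))$ is a sum of products of at most $n$ operators $t_u$ with $u\in P(S/Q)$. This is what makes the sets $M_T(x)\subseteq S$, built from $t_{x_i}$ with $x_i\in S$, orthogonally complete, so that one $\alpha\in B$ kills all $M_{T,T'}$. Writing Jordan operators through an associative envelope ($l_x=\frac12(l^A_x+r^A_x)$, with $M$ spanned by operators $l^A_a r^A_b$) produces operators indexed by elements $a,b\in A$ that are not in $P(S/Q)$. So it gives no bound of the required type, neither for hermitian non-$PI$ quotients nor for $A^{(+)}$, $\Sym(A,*)$ of large dimension. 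Even among $PI$ quotients, the bound $n\le\max\{1,k^2\}$ depends on $k=\dim_{\CM(S/Q)}P(S/Q)$. For a general non-degenerate $R$, $k$ is not bounded over all strongly prime quotients. This is why the paper only says that for $PI$ algebras the lemma follows from the preceding remarks (Corollary~2.7).

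For alternative $R$ your route does work, and it is a genuinely different proof there. Associative quotients give $n=2$, and Cayley--Dickson central closures give $n\le 64$. You must replace $\Spec(S)$ by the $Q$ with $S/Q$ strongly prime, using $\bigcap Q=Mc(S)=\{0\}$.

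Your fallback is the paper's actual proof, and it is what you need in the Jordan case. Take $I_{x,y}=(xS)y$, resp. $I_{x,y}=\{x,S,y\}$. These sets are orthogonally complete, so $I_{x,y}\subseteq PS$ gives $\alpha\in B$ with $I_{\alpha x,y}=\{0\}$. By the primeness criterion of \cite{BMS}, Theorems~1, 2, $\alpha x\in Q$ or $y\in Q$ for every $Q$ with $S/Q$ strongly prime.

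The step you skip is the passage back to $S$. What you obtain is $(\alpha x)_S(y)_S\subseteq\bigcap Q=Mc(S)=\{0\}$, using non-degeneracy of $S$ and speciality of $Mc$. Only after that can you separate. The idempotent $\beta\in B(R)$ with $\beta=\Id$ on $(\alpha x)_S$ and $\beta=0$ on $(y)_S$ does not come from Lemma~2.3. It comes from complementing $\CM(R)(\alpha x)_S$ to an essential submodule containing $\CM(R)(y)_S$ and using quasi-injectivity of $O(P(R))$ (Proposition~2.4, p.~4), as at the start of the proof of Lemma~2.5. Then $\beta\in B$ forces $y\in PS$, and $\Id_{P(R)}-\beta\in B$ forces $x\in PS$. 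Finally, \cite{BMS} together with the non-degeneracy of $S_B$ gives strong primeness.
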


\begin{proof}
Let ${P=B(R)\setminus B, S=O(R), O(P(R))}$ and ${I_{x, y}\subseteq P S}$ 
for some ${x, y\in S, I_{x, y}=(x S) y}$ and ${I_{x, y}=\{x, S, y\}}$ 
for the alternative and linear Jordan algebra $R$, respectively, 
\[
\{x, z ,y\}\ =\ z 1/2 (U_{x+y}-U_x-U_y)\ =\ (x z) y+(y z) x-(x y) z\,. 
\]
Since ${I_{x, y}=O(I_{x, y})}$, there is ${\alpha\in B}$, ${\alpha I_{x, y}=
I_{\alpha x, y}=\{0\}}$ (Proposition 2.4, p. 6). Then for any ${Q\lhd S}$ 
such that the algebra $S/Q$ is strongly prime, ${I_{\alpha x, y}\subseteq Q}$, 
${\alpha x\in Q}$ or (and) ${y\in Q}$, ${(\alpha x)_S (y)_S\subseteq Q}$ 
\cite{BMS}, Theorems 1, 2 and, due to the non-degeneracy of the algebra $S$ 
and the speciality of the radical $Mc$ \cite{ZelM}, \cite{Gol10}, 
Remark 3.8, ${(\alpha x)_S (y)_S=\{0\}}$. 
Hence there is ${\beta\in B(R)}$, ${\beta u=u}$, ${\beta v=0}$ for all 
${u\in (\alpha x)_S, v\in (y)_S}$ (see the beginning of the proof of 
Lemma 2.5) and either ${\beta\in B, \beta y=0, y\in P S}$, or 
${\Id_{P(R)}-\beta\in B, (\Id_{P(R)}-\beta) \alpha x=0, x\in P S}$. 
Therefore, the algebra $S_B$ is strongly prime \cite{BMS}, Theorems 1, 2.
\end{proof}
    
\begin{lemma}
If $R$ and $O(R)$ are non-degenerate Lie algebras, $B$ is an ultrafilter in 
$B(R)$, then the Lie algebra $O(R)_B$ is strongly prime.
\end{lemma}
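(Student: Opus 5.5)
I will follow the scheme of the proof of Lemma 2.8, replacing the alternative/Jordan "sandwich" sets by the Lie quadratic operators. Put $P=B(R)\setminus B$ and $S=O(R)$. Strong primeness of a Lie algebra $L$ means two things. First, $L$ is non-degenerate: there is no $0\ne x$ with $\ad_x^2=0$. Second, $L$ is prime: the annihilator of a non-zero ideal is zero, or equivalently, for $x,y\in L$ with $[[L,x],y]$-type products vanishing identically, $x=0$ or $y=0$. The natural test set is therefore
\[
I_{x,y}\ =\ \{[[z,x],y]\mid z\in S\}\ \cup\ \{[[[z,x],w],y]\mid z,w\in S\}\,,
\]
which is essentially the Lie analogue of $(xS)y$.

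\textbf{Step 1 (reduction to a single $\alpha\in B$).} Suppose the images of $x,y\in S$ in $S_B$ fail the strong primeness test, so $I_{x,y}\subseteq P S$. By the computation in the proof of Proposition 2.4, orthogonal sums commute with the operations. Hence $I_{x,y}=O(I_{x,y})$, because $\xi_a$-components can be chosen independently. By Proposition 2.4, p. 6, some $\alpha\in B$ satisfies $\alpha I_{x,y}=I_{\alpha x,y}=\{0\}$. The same argument applied to the non-degeneracy test gives the following: if $\ad_x^2\subseteq PS$ on $S$, then some $\alpha\in B$ has $\alpha x\,\ad^2_{S}=0$, i.e. $(\alpha x)\ad_S^2=0$. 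By the non-degeneracy of $S=O(R)$ this yields $\alpha x=0$ and $x\in PS$. This part is exactly the inheritance remark made before Corollary 2.7 with $f(x_1,x_2)=(x_2x_1)x_1$, so $S_B$ is non-degenerate.

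\textbf{Step 2 (from $I_{\alpha x,y}=0$ to orthogonal ideals).} This is the main obstacle. I would work inside the non-degenerate Lie algebra $S$ and use that the Kostrikin radical is special. Concretely, a non-degenerate Lie algebra is a subdirect product of strongly prime Lie algebras; this is the Lie analogue of \cite{ZelM} used in Lemma 2.8 and is the reason the hypothesis requires $O(R)$ itself to be non-degenerate. In each strongly prime quotient $S/Q$, the vanishing of $[[S,\alpha x],y]$ modulo $Q$ forces $\alpha x\in Q$ or $y\in Q$. I expect this to follow from the standard fact that in a strongly prime Lie algebra $[[L,a],b]=0$ implies $a=0$ or $b=0$. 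One shows it via the ideal generated by $a$ acting on $b$, using the Jacobi identity and non-degeneracy to kill $[b,(a)_L]$ and hence $b$. Therefore $(\alpha x)_S\cap(y)_S$ and $[(\alpha x)_S,(y)_S]$ lie in every such $Q$, hence are $\{0\}$. If the subdirect decomposition is unavailable over general $F$, I would instead argue directly in $S$. Since $[[S,\alpha x],y]=0$, the ideal $(\alpha x)_S$ centralizes $y$. A Jacobi-identity induction then shows $[(\alpha x)_S,(y)_S]$ is an abelian ideal of square zero in a non-degenerate algebra, hence zero.

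\textbf{Step 3 (separation by an idempotent).} Since $[(\alpha x)_S,(y)_S]=\{0\}$ and $S$ is semiprime, the intersection $(\alpha x)_S\cap(y)_S$ is a square-zero ideal and so vanishes. Exactly as at the beginning of the proof of Lemma 2.5, I complement $\CM(R)(\alpha x)_S$ to an essential $M(P(R))'$-submodule containing $\CM(R)(y)_S$. Quasi-injectivity of $O(P(R))$ together with Proposition 2.4, p. 4, then produces $\beta\in B(R)$ with $\beta u=u$ and $\beta v=0$ for all $u\in(\alpha x)_S$, $v\in(y)_S$. Since $B$ is an ultrafilter, either $\beta\in B$, giving $y\in PS$, or $\Id_{P(R)}-\beta\in B$, giving $\alpha x=\alpha\beta x$ killed by $\Id_{P(R)}-\beta$, so $x\in PS$. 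Combined with Step 1, this shows that $O(R)_B$ is prime and non-degenerate, i.e. strongly prime.
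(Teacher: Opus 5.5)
Your overall frame is the paper's: reduction to a single $\alpha\in B$ via Proposition 2.4, p.~6, then separation by an idempotent as in Lemmas 2.5 and 2.8. However, Step~2 has a genuine gap in the generality of the lemma, which puts no torsion restriction on $F$ or $R$.

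\textbf{The gap in Step~2.} In the paper, non-degeneracy means there is no $0\ne x$ with $\ad_x(F\Id_R+\ad(R))\ad_x=\{0\}$. Your primary route uses that a non-degenerate Lie algebra is a subdirect product of strongly prime ones. Equivalently, in a non-degenerate $S$, $\ad_a\ad(S)\ad_b=\{0\}$ would force $(a)_S\cap(b)_S=\{0\}$. The paper invokes this only for algebras without $6$-torsion: \cite{LopJ}, Theorems 7.1.7 and 7.2.2, are used in Lemma 3.8 and in the closing remark of this very proof, that for $R$ without $6$-torsion ``the proof can be shortened''. It is not available for arbitrary $F$, and the Lie analogue of \cite{ZelM} you appeal to does not exist in this generality.

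\textbf{The fallback is circular.} The condition $I_{\alpha x,y}=\{0\}$ only says that $[S,\alpha x]$ and $[[S,\alpha x],S]$ commute with $y$. It says nothing about $[\alpha x,y]$, and the Jacobi identity turns $[[[a,s],t],y]$ into terms such as $[[a,s],[t,y]]$ that are not controlled. Since the centralizer of an ideal is an ideal, the claim that $(\alpha x)_S$ centralizes $y$ is equivalent to $[(\alpha x)_S,(y)_S]=\{0\}$. That is the whole content of Step~2.

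\textbf{The same torsion issue in Step~1.} With the paper's definition, $\ad_{\alpha x}^2=0$ yields only $2\alpha x=0$: if $\ad_z^2=0$ then $0=\ad_{r\ad_z^2}=-2\ad_z\ad_r\ad_z$. You must also put $[[[S,x],S],x]$ into the test set, as in the remark before Corollary 2.7.

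\textbf{What the paper does instead.} In general it never tries to derive $(\alpha x)_{O(R)}\cap(y)_{O(R)}=\{0\}$. It uses the criterion of \cite{GK}, Theorem~1.3, with $I_{x,y}=[[[O(R),y],O(R)],x]$, and reruns the proof of \cite{GK}, Lemma~2.5, modulo $PO(R)$:
\begin{enumerate}
\item If $J_z=[[O(R),z],z]\subseteq PO(R)$, one gets $\beta_z\in B$ with $\ad_{\beta_z z}^2=0$, but only $2\beta_z z=0$.
\item If $z\notin PO(R)$ and $\ad_{\beta z}^2=0$ for some $\beta\in B$, then $I_z=[[O(R),z],[O(R),z]]=O(I_z)\not\subseteq PO(R)$.
\item Descending to $x'\in I_{\beta_x x}$, $x''\in I_{\beta_{x'}x'}$, $y'\in I_{\beta_y y}$, $y''$, $y'''$, all outside $PO(R)$, produces $u=x''\in(x)_{O(R)}$ and $v=y'''\in(y)_{O(R)}$, both outside $PO(R)$, with $(u)_{O(R)}\cap(v)_{O(R)}=\{0\}$.
\end{enumerate}
Only then is your Step~3 applied, to $u,v$ rather than to $\alpha x,y$, and the ultrafilter dichotomy gives the contradiction.

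So your argument is essentially the shortened proof the paper mentions for algebras without $6$-torsion. The missing idea for the general case is this descent to deeper elements of the ideals. It replaces the implication $\ad_a\ad(S)\ad_b=\{0\}\Rightarrow(a)_S\cap(b)_S=\{0\}$, which the paper only has for $6$-torsion-free algebras.
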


\begin{proof}
From \cite{GK}, the proof of Lemma 2.5 we can conclude that if there are 
$x, y\in O(R)\setminus P O(R)$, 
${I_{x, y}=[[[O(R), y], O(R)], x]\subseteq P O(R)}$, ${P=B(R)\setminus B}$, 
then there exist $u\in (x)_{O(R)}\setminus P O(R)$, 
${v\in (y)_{O(R)}\setminus P O(R), (u)_{O(R)}\cap (v)_{O(R)}=\{0\}}$. 
As ${I_{x, y}=O(I_{x, y})}$, ${\alpha I_{x, y}=I_{\alpha x, y}=\{0\}}$ 
for some ${\alpha\in B}$ (Proposition 2.4, p. 6) and without loss of 
generality we can assume that ${I_{x, y}=\{0\}}$. 
If ${z\in O(R), J_z=[[O(R), z], z]=O(J_z)\subseteq P O(R)}$, then 
${\beta_z J_z=J_{\beta_z z}=\{0\}}$ for some ${\beta_z\in B}$, 
${\ad_{\beta_z z}^2=0, 2 \beta_z z=0}$ 
(Proposition 2.4, p. 6; non-degeneracy of the Lie algebra 
$O(R)$, ${\ad_{r \ad_{\beta_z z}^2}=-2 \beta_z \ad_z \ad_r \ad_z=0}$, 
${r\in O(R)}$). If ${z\in O(R)\setminus P O(R)}$, ${\beta\in B}$, 
${\ad_{\beta z}^2=0}$,\linebreak then 
${I_z=[[O(R), z], [O(R), z]]=O(I_z)\not\subseteq P O(R)}$ (otherwise there 
exists ${\gamma\in B}$, ${\gamma I_z=\{0\}}$ (Proposition 2.4, p. 6), 
${\gamma \beta I_z=I_{\gamma \beta z, \gamma \beta z}=\{0\}}$, 
${\gamma \beta z=0}$, ${z\in P O(R)}$?!). Hence, as in \cite{GK}, the 
proof of Lemma 2.5, we can assume that for any 
${x'\in (x)_{O(R)}\setminus P O(R)}$, 
${y'\in (y)_{O(R)}\setminus P O(R)}$, ${I_{x', y'}=\{0\}}$, there are 
${\beta_{x'}, \beta_{y'}\in B}$, 
${\ad_{\beta_{x'} x'}^2=\ad_{\beta_{y'} y'}^2=0}$, 
${2 \beta_{x'} x'=2 \beta_{y'} y'=0}$. In particular, this is true for all 
${x'\in I_x\setminus P O(R)}$, ${y'\in I_y\setminus P O(R)}$. It remains to 
take ${x'\in I_{\beta_x x}\setminus P O(R)}$, 
${x''\in I_{\beta_{x'} x'}\setminus P O(R)}$, 
${y'\in I_{\beta_y y}\setminus P O(R)}$, 
${y''\in I_{\beta_{y'} y'}\setminus P O(R)}$, 
${y'''\in I_{\beta_{y''} y''}\setminus P O(R)}$ and similarly to \cite{GK}, 
the proof of Lemma 2.5, obtain ${(x'')_{O(R)}\cap (y''')_{O(R)}=\{0\}}$. 
For $u, v\in O(R)\setminus P O(R)$, ${(u)_{O(R)}\cap (v)_{O(R)}=\{0\}}$, we 
can choose ${\delta\in B(R)}$, ${\delta u=u, \delta v=0}$ (the end of 
the proof of Lemma 2.8), and either ${\delta\in B, v\in P O(R)}$ or 
${\Id_{P(R)}-\delta\in B, u\in P O(R)}$?! 

Thus, ${I_{x, y}\not\subseteq P O(R)}$ for all ${x, y\in O(R)\setminus P O(R)}$, 
the Lie algebra $O(R)_B$ is strongly prime \cite{GK}, Theorem 1.3. For 
$R$ without $6$--torsion, the non-degeneracy condition for $O(R)$ can be 
omitted (the non-degeneracy of $R$ is inherited by 
$P(R)$, $O(P(R))$, $O(R)$ (Proposition 3.2, \cite{OMol}, Propositions 
2.4, 2.7)) and the proof can be shortened using \cite{LopJ}, Theorem 7.2.2.
\end{proof}

\section{Orthogonal completeness and systems of $\mathfrak{M}$--quotients} 

In \cite{B6, BMich} the orthogonal completion $O(A)$ of the semiprime 
associative algebra $A$ is constructed within the complete 
ring of quotients $Q(A)$ and, in view of ${Z(Q(A))={}_A C\cong \CM(A)}$, 
is canonically isomorphic to the construction described here through the 
identification of ${{\sum\limits_{a\in I}}^{\perp} \xi_a x_a}$ in $Q(A)$ 
and ${I(P(A))\cong I({}_A Q)}$. From \cite{B6}, the proof of Lemma 1, p. 1 
it follows that for any orthogonal ${\{\xi_a\}_{a\in I}\subseteq B(A), 
\{x_a\}_{a\in I}\subseteq {}_A Q}$ we can complement   
${J=\sum\limits_{a\in I} \xi_a {}_A Q\lhd {}_A Q}$ to  
${K=J\oplus J'\in \mathcal{E}({}_A Q)}$, ${J'\lhd {}_A Q}$, consider 
${\phi\in \Hom(K, {}_A Q)_{{}_A Q}}$, ${\psi\in \Hom_{{}_A Q}(K, {}_A Q)}$, 
\[
\phi\,:\ \sum_{i=1}^n \xi_{a_i} y_{a_i}+y\longmapsto 
\sum_{i=1}^n \xi_{a_i} x_{a_i} y_{a_i}\,,\ 
\psi\,:\ \sum_{i=1}^n \xi_{a_i} y_{a_i}+y\longmapsto 
\sum_{i=1}^n \xi_{a_i} y_{a_i} x_{a_i}\quad
(y_a\in {}_A Q,\ y\in J')\,,
\]
${x' (\phi x)=(x' \psi) x}$ for any ${x, x'\in K}$, and select  
${q\in Q^s_m({}_A Q), \phi=l_q|_K, \psi=r_q|_K}$ (see introduction), 
${\xi_a \phi=\psi \xi_a=\xi_a x_a=\xi_a q}$ for all ${a\in I}$. Thus, 
\[
O({}_A Q)\subseteq Q^s_m({}_A Q)\subseteq 
Q({}_A Q)\ =\ Q(A)\ =\ Q(Q(A))\ =\ O(Q(A))
\]
\cite{Ut}, (1.14), (1.15), \cite{B6}, Lemma 1, \cite{BMich}, 
Proposition 8.7. This also follows from \cite{Har2}, Lemmas 3, 4.
For any dense orthogonal ${\{\chi_a\}_{a\in I}\subseteq B(A)}$, 
${\{z_a\}_{a\in I}\subseteq Q^r_m(A)}$ there exist 
${J_a\in \mathcal{E}(A)}$, ${\chi_a J_a, \chi_a z_a J_a\subseteq A}$, 
${a\in I}$. If ${J=\sum\limits_{a\in I} \chi_a J_a}$ and 
${x\in A}$, ${x J=\{0\}}$, then ${\chi_a x J_a=\{0\}}$, ${\chi_a x=0}$ 
for all ${a\in I}$, ${x=0}$,
${\Ann_A J=\{0\}}$, ${J\in \mathcal{E}(A)}$. For ${\nu\in \Hom(J, A)_A}$, 
\[
\nu\,:\ \sum_{i=1}^n \chi_{a_i} x_{a_i}\longmapsto 
\sum_{i=1}^n \chi_{a_i} z_{a_i} x_{a_i}\quad (x_a\in J_a)\,,
\]
we can find ${z\in Q^r_m(A), \nu=l_z|_J, 
(\chi_a z-\chi_a z_a) J_a=\{0\}, \chi_a z=\chi_a z_a}$ for all ${a\in I}$.
If $\{z_a\}_{a\in I}\subseteq Q^s_m(A)$, ${J_a\in \mathcal{E}(R)}$, 
${\chi_a J_a, \chi_a z_a J_a, J_a \chi_a z_a\subseteq A}$, ${a\in I}$, then 
${J=\sum\limits_{a\in I} \chi_a J_a\in \mathcal{E}(A)}$, ${z J\subseteq A}$ 
(see above), ${J z=\sum\limits_{a\in I} J_a \chi_a z=\sum\limits_{a\in I} 
J_a \chi_a z_a\subseteq A}$, ${z\in Q^s_m(A)}$. It follows that 
${Q^*_m(A)=O(Q^*_m(A))}$, ${*=r, s}$.

The construction of an orthogonal completion within the symmetric Martindale 
ring of quotients is also possible for the Lie and Jordan analogues of the 
last of \cite{Mol, JMQ}.
The definitions of the graded central closure $P_{gr}(R)$ and the Martindale 
centroid $\CM_{gr}(R)$ of a homogeneous semiprime $\Gamma$--graded 
$F$--algebra ${(R, \{R_{\gamma}\}_{\gamma\in \Gamma})}$ can be found in 
\cite{Gol6} before Proposition 4.37, where $\Gamma$ is an Abelian group in 
additive notation, $\{R_{\gamma}\}_{\gamma\in \Gamma}$ are $F$--submodules of 
${R=\bigoplus\limits_{\gamma\in \Gamma} R_{\gamma}, 
R_{\alpha} R_{\beta}\subseteq R_{\alpha+\beta}, 
\alpha, \beta\in \Gamma}$, ${I\lhd R}$ is homogeneous (${I\lhd_{gr} R}$) if 
${I=\bigoplus\limits_{\gamma\in \Gamma} (I\cap R_{\gamma})}$, 
$R$ is homogeneously prime (semiprime) if ${I J\ne \{0\}}$ 
(${I^2\ne \{0\}}$) for all ${\{0\}\ne I, J\lhd_{gr} R}$, 
$I_{gr}(R)$ and ${P_{gr}(R)=\End(I_{gr}(R))_{M(R)'-gr} R}$ are the graded 
injective and quasi-injective hulls of the $\Gamma$--graded module $R$ over the 
$\Gamma$--graded algebra ${(M(R)', \{M_{\gamma}\}_{\gamma\in \Gamma})}$ with 
${M_0=F \Id_R+M(R)_0}$, ${M_{\gamma}=M(R)_{\gamma}}$ for 
${0\ne \gamma\in \Gamma}$,  
\[
M(R)_{\alpha}\ =\ \Biggl\{\sum_{k=1}^m t_{k 1}\cdots t_{kn_k}\Biggl| 
t_{ki}\in \{l_{x_{ki}}, r_{x_{ki}}\},\ x_{ki}\in R_{\gamma_{ki}},\ 
\sum_{i=1}^{n_k} \gamma_{ki}=\alpha,\ m, n_k\geq 1\Biggr\}\quad 
(\alpha\in \Gamma)\,,
\]
$\End(I_{gr}(R))_{M(R)'-gr}$ and  
\[
\CM_{gr}(R)\ =\ \End(P_{gr}(R))_{M(R)'-gr}\ =\ 
\{\psi|_{P_{gr}(R)}\mid \psi\in \End(I_{gr}(R))_{M(R)'-gr}\}
\]
are the algebras of homogeneous $M(R)'$--endomorphisms of $I_{gr}(R)$ and 
$P_{gr}(R)$. The conclu\-sion for the homogeneously semiprime $R$ of the 
commutativity and regularity of 
$\CM_{gr}(R)$, the definition of the $\Gamma$--graded $\CM_{gr}(R)$--algebra 
${(P_{gr}(R)=\CM_{gr}(R) R, \{\CM_{gr}(R) R_{\gamma}\}_{\gamma\in \Gamma})}$,  
the proofs of the homogeneous semipremeness of $P_{gr}(R)$ and, for the 
homogeneous prime $R$, the fact that $\CM_{gr}(R)$ is a field and $P_{gr}(R)$ 
is homogeneously prime, are similar to the ungraded\linebreak case 
\cite{Raz}, Lemma 3.1, Proposition 3.1, 3.2, p. 42--44, \cite{Lamb}, 
Proposition 1, p. 168. 
By analogy with the previous one, for the homogeneously semiprime 
${(R, \{R_{\gamma}\}_{\gamma\in \Gamma})}$ we define a Boolean ring 
${B_{gr}(R)=\{\alpha\in \CM_{gr}(R)\mid \alpha=\alpha^2\}}$, an 
orthogonally complete ${S\subseteq I_{gr}(P_{gr}(R))}$, and an 
orthogonal completion $O_{gr}(S)$ of ${S\subseteq I_{gr}(P_{gr}(R))}$.
The proofs of 
\[
E^{\perp}\ =\ \{x\in I_{gr}(P_{gr}(R))\mid E x=\{0\}\}\ =\ \{0\}\quad 
(E\subseteq \CM_{gr}(R),\ \Ann_{\CM_{gr}(R)} E=\{0\})\,,
\]
the self-injectivity of $\CM_{gr}(R)$, the existence for any 
${S\subseteq I_{gr}(P_{gr}(R))}$ of a unique 
${\alpha\in B_{gr}(R)}$, ${\Ann_{\CM_{gr}(R)} S=\alpha \CM_{gr}(R)}$, 
and the completeness of 
$B_{gr}(R)$ are analogous to Remark 2.2 ($E^{\perp}$ is a homogeneous 
$M(P_{gr}(R))'$--submodule of $I_{gr}(P_{gr}(R))$, 
${M=E^{\perp}\cap P_{gr}(R)}$ should be complemented to the essential 
among the homogeneous $M(R)'$--submodules of $P_{gr}(R)$ and select 
${\alpha\in B_{gr}(R)}$; homogeneity of submodules and subalgebras is 
defined as for ideals) and Lemma 2.3 ($\tau_{\phi}$, ${I\lhd \CM_{gr}(R)}$, 
${\phi\in \Hom_{\CM_{gr}(R)}(I, \CM_{gr}(R))}$, is homogeneous).

If ${(R, \{R_{\gamma}\}_{\gamma\in \Gamma})}$ is a homogeneously semiprime 
algebra with finite $\Gamma$--grading, that is, ${|\supp(R)|< \infty}$, 
${\supp(R)=\{\gamma\in \Gamma\mid R_{\gamma}\ne \{0\}\}}$, then the 
$\Gamma$--gradings $P_{gr}(R)$, $M(P_{gr}(R))'$ and $I_{gr}(P_{gr}(R))$ 
are finite, since ${\supp(R)=\supp(P_{gr}(R)), P_{gr}(R)}$ is an 
essential homogeneous $M(P_{gr}(R))'$--submodule of $I_{gr}(P_{gr}(R))$, 
\[
\supp(M(P_{gr}(R)))\,,\ \supp(I_{gr}(P_{gr}(R))) \subseteq 
\{\gamma-\gamma'\mid \gamma, \gamma'\in \supp(R)\}\,,
\]
and any dense orthogonal $\Xi=\{\xi_a\}_{a\in I}\subseteq B_{gr}(R)$ 
corresponds to a homogeneous $M(P_{gr}(R))'$--embedding 
${\psi_{\Xi}: I_{gr}(P_{gr}(R))\hookrightarrow 
\bigoplus\limits_{\gamma\in \Gamma} 
\prod\limits_{a\in I} \xi_a I_{gr}(P_{gr}(R))_{\gamma}}$, 
\[
\psi_{\Xi}\,:\ \sum_{i=1}^k x_{\gamma_i}\longmapsto 
\sum_{i=1}^k \prod_{a\in I} \xi_a x_{\gamma_i}\quad 
\biggl(x=\sum_{i=1}^k x_{\gamma_i}\in I_{gr}(P_{gr}(R))\biggr)\,,
\]
$M(P_{gr}(R))'$--epimorphism 
${\phi_{\Xi}: \bigoplus\limits_{\gamma\in \Gamma} 
\prod\limits_{a\in I} \xi_a I_{gr}(P_{gr}(R))_{\gamma}\longrightarrow 
I_{gr}(P_{gr}(R))}$, ${\phi_{\Xi} \psi_{\Xi}=\Id_{I_{gr}(P_{gr}(R))}}$. 
If ${\{x_a\}_{a\in I}\subseteq I_{gr}(P_{gr}(R))}$, 
${z_{\gamma}\in \prod\limits_{a\in I} \xi_a I_{gr}(P_{gr}(R))_{\gamma}}$, 
${z_{\gamma}: a\longmapsto \xi_a x_{a, \gamma}}$, ${a\in I}$, 
${\gamma\in \Gamma}$, then  
\[
\phi_{\Xi} (\xi_a z_{\gamma})\ =\ \xi_a (\phi_{\Xi} z_{\gamma})\ =\ 
\phi_{\Xi} \psi_{\Xi}(\xi_a x_{a, \gamma})\ =\ \xi_a x_{a, \gamma}\in 
I_{gr}(P_{gr}(R))_{\gamma}\quad (a\in I)\,,
\]
${\phi_{\Xi} z_{\gamma}={\sum\limits_{a\in I}}^{\perp} \xi_a x_{a,\gamma}\in 
I_{gr}(P_{gr}(R))_{\gamma}}$, ${\gamma\in \Gamma}$, and 
${{\sum\limits_{a\in I}}^{\perp} \xi_a x_a=
\sum\limits_{\gamma\in \Gamma} 
{\sum\limits_{a\in I}}^{\perp} \xi_a x_{a, \gamma}}$ (${\Xi^{\perp}=\{0\}}$). 
As a consequence, with the necessary changes in the proof, Proposition 2.4 is 
transferred to $R$: 1) ${I_{gr}(P_{gr}(R))=O_{gr}(I_{gr}(P_{gr}(R)))}$; 2) 
if ${S\subseteq I_{gr}(P_{gr}(R))}$, then ${O(S)=O(O(S))}$ is the set of 
all ${{\sum\limits_{a\in I}}^{\perp} \xi_a x_a\in I_{gr}(P_{gr}(R))}$ for a 
dense orthogonal 
${\{\xi_a\}_{a\in I}\subseteq B_{gr}(R), \{x_a\}_{a\in I}\subseteq S}$; 3) 
\[
(O_{gr}(P_{gr}(R)),\ \{O_{gr}(P_{gr}(R)_{\gamma})=O_{gr}(P_{gr}(R))\cap 
I_{gr}(P_{gr}(R))_{\gamma}\}_{\gamma\in \Gamma})
\]
is a homogeneously semiprime $\CM_{gr}(R)$--algebra with the operations 
\[
{\sum\limits_{a\in I}}^{\perp} \xi_a x_a\star 
{\sum\limits_{b\in J}}^{\perp} \chi_b y_b\ =\ 
{\sum\limits_{a\in I,\ b\in J}}^{\perp} \xi_a \chi_b (x_a\star y_b)\quad 
(\star=+, \cdot) 
\]
for dense orthogonal 
${\{\xi_a\}_{a\in I}, \{\chi_b\}_{b\in J}\subseteq B_{gr}(R), 
\{x_a\}_{a\in I}, \{y_b\}_{b\in J}\subseteq P_{gr}(R)}$;
4) $O_{gr}(P_{gr}(R))$ is a complete invariant $M(P_{gr}(R))'$--submodule of 
$I_{gr}(P_{gr}(R))$, 
\begin{multline*}
\End(O_{gr}(P_{gr}(R)))_{M(P_{gr}(R))'-gr}\ =
\\ 
\End(O_{gr}(P_{gr}(R)))_{M(O_{gr}(P_{gr}(R)))'-gr}\ =\ 
\CM_{gr}(R) \Id_{O_{gr}(P_{gr}(R))}\,;
\end{multline*} 
5) ${(O_{gr}(R), \{O_{gr}(R_{\gamma})\}_{\gamma\in \Gamma})}$ is a homogeneously 
semiprime $F$--subalgebra $O_{gr}(P_{gr}(R))$ and if any element of 
$P_{gr}(R)$ is a $\CM_{gr}(R)$--linear combination of $n$ elements of 
$R$ for some ${n\geq 1}$, then ${O_{gr}(P_{gr}(R))=\CM_{gr}(R) O_{gr}(R)}$; 
6) for any ${S=O_{gr}(S)\subseteq I_{gr}(P_{gr}(R))}$ there is ${x\in S}$, 
${\Ann_{\CM_{gr}(R)} S=\Ann_{\CM_{gr}(R)} x}$. 

Following \cite{OMol}, we call a $\Gamma$--graded Lie algebra 
${(Q, \{Q\}_{\gamma\in \Gamma})}$ a \emph{graded algebra of quotients of a 
$\Gamma$--graded Lie algebra} ${(R, \{R_{\gamma}\}_{\gamma\in \Gamma})}$ 
if $R$ is a homogeneous subalgebra of $Q$ and for any 
${0\ne q\in Q_{\gamma}, \gamma\in \Gamma}$ one can choose 
${I\lhd_{gr} R, \Ann_R I=\{0\}, \{0\}\ne [I, q]\subseteq R}$ 
(it is equivalent: for any ${u\in Q_{\alpha}, v\in Q_{\beta}, \alpha, \beta\in 
\Gamma, u\ne 0}$, there are ${x\in R_{\gamma}, \gamma\in \Gamma, 
[x, u]\ne 0, [x, (v)_R]\subseteq R}$), where 
${\Ann_R I=\{x\in R\mid [x, I]=\{0\}\}}$, ${(v)_R=v \Ad^Q(R)'}$.

The homogeneous semiprimeness (primeness) of $R$ and, for $R$ without 
$6$--torsion, its homo\-geneous non-degeneracy are inherited by $Q$ 
(\cite{OMol}, Proposition 2.4; $R$ is essential among the homogeneous 
$\Ad(R)'$--submodules of $Q$; $Q$ inherits the non-degeneracy of $R$ 
without $6$--torsion \cite{Mol}, \cite{OMol}, Proposition 2.7; 
homogeneous non-degeneracy of $R$ is the absence of such 
${0\ne x\in R_{\gamma}}$, ${\gamma\in \Gamma}$, that 
${\ad_x (F \Id_R+\ad(R)) \ad_x=\{0\}}$). 

For the homogeneously semiprime Lie algebra 
${(R, \{R_{\gamma}\}_{\gamma\in \Gamma})}$, the \emph{maximal graded algebra 
of quotients ${(Q_{m-gr}(R), \{Q_{m-gr}(R)_{\gamma}\}_{\gamma\in \Gamma})}$}, 
into which the graded algebras of $R$ are embedded by means of homogeneous 
monomorphisms identical on $R$, is determined uniquely up to a homogeneous 
isomorphism identical on $R$ by the axioms: 
\begin{enumerate}

\item $R$ is a homogeneous subalgebra of $Q_{m-gr}(R)$; 

\item for any ${x\in Q_{m-gr}(R)_{\gamma}, \gamma\in \Gamma}$ there exists 
${I\in \mathcal{E}_{gr}(R)}$, ${[I, x]\subseteq R}$; 

\item if ${x\in Q_{m-gr}(R)_{\gamma}, \gamma\in \Gamma, 
I\in \mathcal{E}_{gr}(R), [I, x]=\{0\}}$, then ${x=0}$; 

\item for any ${I\in \mathcal{E}_{gr}(R), D\in \PDer_{gr}(I, R)_{\gamma}, 
\gamma\in \Gamma}$ there is ${y\in Q_{m-gr}(R)_{\gamma}, \ad_y|_I=D}$, 

\end{enumerate}
where ${\mathcal{E}_{gr}(R)=\{I\lhd_{gr} R\mid \{0\}\ne I\cap J\ \forall 
\{0\}\ne J\lhd_{gr} R\}}$ 
(${\mathcal{E}_{gr}(R)=\{I\lhd_{gr} R\mid I\in \mathcal{E}(R)\}}$; 
if ${I\lhd_{gr} R}$, then ${\Ann_R I\lhd_{gr} R}$, 
${I\in \mathcal{E}(R)}$ is equivalent to ${I\cap J\ne \{0\}}$ for all 
${\{0\}\ne J\lhd_{gr} R}$ \cite{OMol}, Lemma 3.1), 
\begin{gather*}
\PDer(I, R)\ =\ 
\{D\in \Hom_F(I, R)\mid (x y) D=(x D) y+x (y D)\ \forall x, y\in I\}\,,
\\
\PDer_{gr}(I, R)_{\gamma}\ =\ \{D\in \PDer(I, R)\mid (I_{\alpha})D\subseteq 
R_{\alpha+\gamma}\ \forall \alpha\in \Gamma\}\quad (\gamma\in \Gamma)\,,
\end{gather*}
${\ad(R)|_I\subseteq \PDer_{gr}(I, R)=
\bigoplus\limits_{\gamma\in \Gamma} \PDer_{gr}(I, R)_{\gamma}}$. The algebra 
$Q_{m-gr}(R)$ is realized as a quotient set 
$\mathcal{D}_{gr}(R)/\sim$ under the equivalence relation from the definition 
of ${}_R C$ (if ${D\in \PDer(I, R)}$, ${J\lhd R, J\subseteq I, 
\Ann_R J=(J)D=\{0\}}$, then ${(J I)D=J((I)D)=\{0\}, D=0}$) with the operations  
\begin{gather*}
[(D, I)]+[(T, J)]\ =\ [(D+T, I\cap J)]\,,\quad f [(D, I)]\ =\ [(f D, I)]\,,
\\
[[(D, I)], [(T, J)]]\ =\ [([D, T], (I\cap J)^2)]
\end{gather*}
${(D, I), (T, J)\in \mathcal{D}_{gr}(R)=\{(D, I)\mid D\in \PDer_{gr}(I, R),\ 
I\in \mathcal{E}_{gr}(R)\}}$, ${f\in F}$, $\Gamma$--grading
\[
Q_{m-gr}(R)_{\gamma}\ =\ \{[(D, I)]\mid D\in \PDer_{gr}(I, R)_{\gamma},\ 
I\in \mathcal{E}_{gr}(R)\}\quad (\gamma\in \Gamma)\,, 
\]
${R \hookrightarrow Q_{m-gr}(R), x\longmapsto [(\ad_x, R)], x\in R}$ 
\cite{OMol}, Theorem 2.11 and the observations before it, \cite{Mol}, 
Theorems 3.6, 3.8. These conclusions can be transferred to any, not necessarily 
homogeneously semiprime, ${(R, \{R_{\gamma}\}_{\gamma\in \Gamma})}$ by 
replacing $\mathcal{E}_{gr}(R)$ with a power filter $\mathcal{F}$ of 
homogeneous ideals of $R$ with zero annihilators and $Q_{m-gr}(R)$ with a 
maximal algebra of quotients $Q_{\mathcal{F}}(R)$ with respect to 
$\mathcal{F}$ with axioms and realization similar to $Q_{m-gr}(R)$, in which 
any graded\linebreak algebra of quotients of $R$ with respect to $\mathcal{F}$ 
(a $\Gamma$--graded Lie algebra ${(Q, \{Q_{\gamma}\}_{\gamma\in \Gamma})}$ such 
that $R$ is a homogeneous subalgebra of $Q$, for any ${0\ne q\in Q}$, 
${\gamma\in \Gamma}$ there is ${I\in \mathcal{F}, 
\{0\}\ne [I, x]\subseteq R}$) is embedded homogeneously and identically on 
$R$ \cite{JMQ}, p. 2.2.

For an ungraded Lie algebra $R$, the definition of the algebra of quotients 
and the description of the maximal algebra of quotients $Q_m(R)$ of a 
semiprime $R$ from \cite{Mol} are obtained from those given above by a 
formal transition to ${\Gamma=\{0\}}$. The graded algebras of quotients of 
a homogeneously semiprime $R$ are its algebras of quotients as a Lie algebra 
without grading \cite{OMol}, Proposition 2.7.

If ${(R, \{R_{\gamma}\}_{\gamma\in \Gamma})}$, ${I\lhd_{gr} R}$, $\pi_{\gamma}$ 
is the projection of $R$ onto $R_{\gamma}$, ${\gamma\in \Gamma}$, 
${D\in \PDer(I, R)}$, then, as is easy to verify,  
\[
D_{\alpha}\,:\ x\longmapsto 
\sum_{\gamma\in \Gamma} x \pi_{\gamma} D \pi_{\gamma+\alpha}\,,\quad
D_{\alpha}\in \PDer_{gr}(I, R)_{\alpha}\quad 
(\alpha\in \Gamma,\ x\in I)\,,
\] 
${D=\sum\limits_{\alpha\in \Gamma} D_{\alpha}}$ for ${D\in \PDer_{gr}(I, R)}$, 
${\PDer(I, R)=\PDer_{gr}(I, R)}$ for $R$ with finite $\Gamma$--grading 
\cite{QTS}, Lemma 2.1. 

For example, let the Lie algebra ${(R, \{R_{\gamma}\}_{\gamma\in \Gamma})}$, 
${R\ne \{0\}}$, be Artinian (with the minimality condition) for homogeneous 
ideals and homogeneously non-degenerate, $\mathcal{M}$ be the set of minimal 
${I\lhd_{gr} R}$. Then ${\Ann_R J\lhd_{gr} R, 
\{0\}=(J\cap \Ann_R J)^2=\Ann_R J\cap J, J\cong \ad(J)}$ for all 
${J\lhd_{gr} R}$ (as a consequence, ${|\mathcal{M}|=1}$ for the homogeneously 
prime $R$), 
\begin{gather*}
I\ =\ I^2\lhd_{\Der} R\,,\quad 
\{0\}\ =\ \Bigl(I\cap \sum_{I'\ne I\in \mathcal{M}} I'\Bigr)^2\ =\ 
I\cap \sum_{I'\ne I\in \mathcal{M}} I'\quad (I\in \mathcal{M})\,,
\\ 
M\ =\ \sum_{I\in \mathcal{M}} I\ =\ \bigoplus_{I\in \mathcal{M}} I\ =\ 
\bigcap_{J\in \mathcal{E}_{gr}(R)} J\in \mathcal{E}_{gr}(R)\,,\quad 
M\ =\ M^2\lhd_{\Der} R\,,
\\
Q_{m-gr}(R)\cong \PDer_{gr}(M, R)\ =\ \Der_{gr}(M)\cong 
\prod_{I\in \mathcal{M}} \Der_{gr}(I)\,,
\end{gather*}
${J\lhd_{\Der} R}$ if ${(J) D\subseteq J}$ for all ${D\in \Der(R)}$, 
the isomorphism ${\phi: \Der_{gr}(M)\longrightarrow 
\prod\limits_{I\in \mathcal{M}} \Der_{gr}(I)}$ acts by the rule: 
${\phi(D): I\longmapsto D|_I\in \Der_{gr}(I)}$, ${I\in \mathcal{M}}$, 
${D\in \Der_{gr}(M)}$, and $R\cong \ad(R)|_M\subseteq 
\Der_{gr}(R)|_M\subseteq \Der_{gr}(M)$. 
If $R$ without $2$--torsion and ${I\in \mathcal{M}}$ is Artinian for 
homogeneous ideals, then for any ${J\lhd_{gr} I}$ there exists 
${k\geq 1, J^k=\bigcap\limits_{i\geq 1} J^i\lhd_{gr} R}$, and either 
${J=J^k=I}$ or ${J^k=\{0\}, J\subseteq K_h(I)\subseteq K_h(R)=\{0\}}$, 
where $K_h(R)$ is the smallest among ${K\lhd_{gr} R}$, $R/K$ is homogeneously 
non-degenerate \cite{Zel}, Lemma 4 (its proof is transferred to the 
graded case), the Lie algebra $I$ is homogeneously simple. 
If ${|\mathcal{M}|< \infty}$, ${\Der_{gr}(I)=\ad(I)}$ for all ${I\in \mathcal{M}}$, then 
\[
R\cong \ad(R) \cong \bigoplus_{I\in \mathcal{M}} \ad(R)|_I\ =\ 
\bigoplus_{I\in \mathcal{M}} \ad(I)\cong \Der_{gr}(M)\ =\ \ad(M)\cong M\,.
\]
If $F$ is a field, ${\dim_F R=n< \infty}$ and ${\Ch F=0, p> k(n)}$, 
${n\geq 3}$, then the non-degeneracy of $R$ is equivalent to semiprimeness 
(semisimplicity) \cite{Gol2}, Proposition 2.15. For such a semiprime 
$R$ and ${\Ch F=0}$, this implies the classical result on the decomposition 
of $R=Q_m(R)=Q_{m-gr}(R)=M$ into direct sums of its simple and 
homogeneously simple (as Lie algebras) ideals \cite{JacL}, $\S 5$, p. 84.
               
Each ${D\in \Der_{gr}(R)=\PDer_{gr}(R, R)}$ can be extended to 
${D\in \Der_{gr}(Q_{m-gr}(R))}$ according to the rule: if 
${q\in Q_{m-gr}(R)_{\gamma}, \gamma\in \Gamma, I\in \mathcal{E}_{gr}(R), 
[I, q]\subseteq R}$, then ${q D_{\alpha}\in Q_{m-gr}(R)_{\gamma+\alpha}}$, 
where ${\ad_{q D_{\alpha}}|_{I^2}\in \PDer_{gr}(I^2, R)_{\gamma+\alpha}, 
[x, q D_{\alpha}]=[x, q] D_{\alpha}-[x D_{\alpha}, q], 
q\in Q_{m-gr}(R)_{\gamma}, x\in I, \alpha\in \Gamma}$.

\begin{prop} 
For a homogeneously semiprime Lie algebra 
${(R, \{R_{\gamma}\}_{\gamma\in \Gamma})}$ with finite $\Gamma$--grading, 
$P_{gr}(R)$ is a $\Gamma$--graded algebra of quotients of
$R$, ${P_{gr}(R)\hookrightarrow Q_{m-gr}(R)}$ homogeneously  
identically on $R$, ${O_{gr}(P_{gr}(R))\subseteq Q_{m-gr}(P_{gr}(R))=
O_{gr}(Q_{m-gr}(P_{gr}(R)))}$, $Q_{m-gr}(P_{gr}(R))_B$ is a 
$\Gamma$--graded algebra of quotients of $O_{gr}(P_{gr}(R))_B$ for any 
ultrafilter $B$ in $B_{gr}(R)$. 
\end{prop}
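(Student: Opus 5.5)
The plan is to prove the four assertions in order, each step feeding the next.

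\emph{Step 1: $P_{gr}(R)$ is a graded algebra of quotients of $R$.} I would imitate Proposition 2.1 in the graded setting. Take a homogeneous $q=\sum_{i=1}^k \nu_i q_i\in P_{gr}(R)_\gamma$ with $\nu_i\in\CM_{gr}(R)$ homogeneous and $q_i\in R$. As in the proof of Proposition 2.1, the ideal
\[
D=\bigcap_i {}_{\nu_i}R
\]
is a homogeneous essential ideal of $R$, so $D\in\mathcal{E}_{gr}(R)$, and it satisfies $[D,q]\subseteq\sum_i\nu_i(D)\subseteq R$. If $q\ne0$, the homogeneous semiprimeness of $R$ together with the essentiality of $R$ among the homogeneous $M(R)'$--submodules of $P_{gr}(R)$ gives $\{0\}\ne(qM(R)'\cap D)^2\subseteq[D,qM(R)']$. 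From this I want $[D,q]\ne\{0\}$. If $[D,q]=\{0\}$, then $[D,qM(R)']=\{0\}$ by the Jacobi identity (one has $[D,[q,x]]\subseteq[[D,q],x]+[q,[D,x]]=\{0\}$ modulo passing to $D^2$), a contradiction. Hence $P_{gr}(R)$ satisfies the definition from \cite{OMol}. Maximality of $Q_{m-gr}(R)$, via axioms 1--4 and the realization by pairs $(D,I)$, then yields the homogeneous embedding $q\mapsto[(\ad_q|_D,D)]$, which is identical on $R$. Here finiteness of the grading is used to get $\PDer=\PDer_{gr}$.

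\emph{Step 2: $O_{gr}(P_{gr}(R))\subseteq Q_{m-gr}(P_{gr}(R))=O_{gr}(Q_{m-gr}(P_{gr}(R)))$.} I would follow the associative argument at the start of \S3. Let $\{\xi_a\}$ be dense orthogonal in $B_{gr}(R)$ and $\{x_a\}\subseteq Q_{m-gr}(P_{gr}(R))_\gamma$, with $J_a\in\mathcal{E}_{gr}$ satisfying $[J_a,x_a]\subseteq P_{gr}(R)$. Put $J=\sum_a\xi_aJ_a$. This is a homogeneous essential ideal, because $\Ann J=\{0\}$ by density and Remark 2.2. Define $D$ on $J$ by $\sum\xi_{a_i}y_i\mapsto\sum\xi_{a_i}[y_i,x_{a_i}]$. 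It is well defined because the $\xi_a$ are orthogonal idempotents commuting with everything, and it is a partial derivation of degree $\gamma$. Axiom 4 then produces $x$ with $\ad_x|_J=D$, and axiom 3 gives $\xi_ax=\xi_ax_a$. So $Q_{m-gr}(P_{gr}(R))$ is orthogonally complete; since it contains $P_{gr}(R)$, it contains $O_{gr}(P_{gr}(R))$. To make sense of the inclusion, I would first check that $B_{gr}(R)$ acts on $Q_{m-gr}(P_{gr}(R))$: each $\xi$ acts by $[(D,I)]\mapsto[(\xi D,I)]$, using that $\CM_{gr}(R)$ commutes with $\ad$. I would also identify $O_{gr}(P_{gr}(R))$ inside it as a graded algebra of quotients of $P_{gr}(R)$, which sends it into $Q_{m-gr}$ by maximality.

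\emph{Step 3: the statement for an ultrafilter $B$.} Write $Q=Q_{m-gr}(P_{gr}(R))$ and $O=O_{gr}(P_{gr}(R))$, with $P=B_{gr}(R)\setminus B$. The congruence $\sim_B$ makes $O_B$ a homogeneous subalgebra of $Q_B$, provided $O\cap PQ=PO$; this holds because $\xi x\in O$ for $x\in O$. Now let $0\ne q+PQ$ be homogeneous in $Q_B$. Choose $I\in\mathcal{E}_{gr}(P_{gr}(R))$ with $[I,q]\subseteq P_{gr}(R)$, and take $\hat I=O_{gr}(I)\lhd O$. I must show $\hat I_B$ has zero annihilator in $O_B$ and that $[\hat I,q]\not\subseteq PQ$. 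For the second claim: if $[\hat I,q]\subseteq PQ$, then since $[\hat I,q]=O([\hat I,q])$ (by orthogonal linearity), Proposition 2.4, item 6 (graded version) gives $\alpha\in B$ with $[\hat I,\alpha q]=0$. Axiom 3 then gives $\alpha q=0$, so $q\in PQ$, a contradiction. The annihilator condition follows by the same pattern: $\Ann_O\hat I$ is orthogonally complete, so $\Ann_{O_B}\hat I_B=(\Ann_O\hat I)_B=0$.

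I expect the main obstacle to be this passage from ``$\subseteq PQ$'' to ``killed by some $\alpha\in B$''. It requires Proposition 2.4, item 6 for subsets of $Q$, whereas that result is stated inside $I_{gr}(P_{gr}(R))$. I would first embed $Q$, or at least the relevant sets, into a setting where the annihilator in $\CM_{gr}(R)$ is principal, using Lemma 2.3. The idea is to replace $q$ by the partial derivations $\ad_q|_I$ taking values in $P_{gr}(R)$, whose annihilators are computed inside $I_{gr}(P_{gr}(R))$.
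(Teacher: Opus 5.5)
Your proposal is correct and follows the paper's proof essentially step by step: the graded analogue of Proposition 2.1 with $\bigcap_i {}_{\nu_i}R$ for the first claim, the same partial derivation $D$ on $J=\sum_a\xi_aJ_a$ for orthogonal completeness, and, for the quotient by $B$, the orthogonal completeness of $[\,\cdot\,,\text{ideal}]$ together with Proposition 2.4, p.~6 and axiom 3. The one variation is in the last step: you use $O_{gr}(I)$ where the paper uses the denominator ideal $H=\{z\in O_{gr}(P_{gr}(R))\mid [z,(x)_{O_{gr}(P_{gr}(R))}]\subseteq O_{gr}(P_{gr}(R))\}$, which works equally well and spares you proving that $H$ is essential. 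The obstacle you flag is handled in the paper as you suggest, via $Q_{m-gr}(P_{gr}(R))\subseteq I_{gr}(P_{gr}(R))$ and $\Ann_{\CM_{gr}(R)}x=\Ann_{\CM_{gr}(R)}[x,I]$; in your version it is even milder, because the sets $[\hat I,q]$ and $[y,\hat I]$ to which p.~6 is applied already lie in $O_{gr}(P_{gr}(R))$.
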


\begin{proof}
If ${0\ne x=\sum\limits_{i=1}^k \alpha_i x_i\in P_{gr}(R)}$, ${k\geq 1}$, 
${x_i\in R_{\gamma}}$, ${\gamma\in \Gamma}$, ${\alpha_i\in \CM_{gr}(R)}$, 
then 
\begin{gather*}
I\ =\ \bigcap_{i=1}^k {}_{\alpha_i} R\in \mathcal{E}_{gr}(R)\,,\quad 
\{0\}\ \ne\ x \Ad(R)'\cap I\lhd_{gr} R\,,
\\ 
\{0\}\ \ne\ (x \Ad(R)'\cap I)^2\subseteq [x \Ad(R)', I]\ =\ [x, I] \Ad(R)'\subseteq R
\end{gather*}
(Proposition 2.1). Hence we can assume that 
\[
R\subseteq \End(Q_{m-gr}(R))_{\Ad(R)'-gr} R\subseteq 
P_{gr}(R)\subseteq Q_{m-gr}(R)\subseteq I_{gr}(R)\,.
\]
Since for any ${x\in Q_{m-gr}(P_{gr}(R))_{\gamma}}$, ${\gamma\in \Gamma}$ 
there are ${I\in \mathcal{E}_{gr}(P_{gr}(R))}$, ${[x, I]\subseteq P_{gr}(R)}$, 
and ${\alpha\in B_{gr}(R), 
\Ann_{\CM_{gr}(R)} x=\Ann_{\CM_{gr}(R)} [x, I]=\alpha \CM_{gr}(R),
Q_{m-gr}(P_{gr}(R))}$ is a non-singular $\CM_{gr}(R)$--module. 

For any dense orthogonal ${\{\xi_a\}_{a\in I}\subseteq B_{gr}(R)}$ and 
${\{x_a\}_{a\in I}\subseteq Q_{m-gr}(P_{gr}(R))_{\gamma}}$, 
${\gamma\in \Gamma}$ one can choose ${J_a\in \mathcal{E}_{gr}(P_{gr}(R))}$, 
${[J_a, \xi_a x_a]\subseteq P_{gr}(R)}$, ${a\in I}$. 
For ${J=\sum\limits_{a\in I} \xi_a J_a\in \mathcal{E}_{gr}(P_{gr}(R))}$ 
(if ${x\in \Ann_{P_{gr}(R)} J}$, then ${\xi_a x_a=0}$ for all ${a\in I}$, 
${x=0}$) and ${D\in \PDer_{gr}(J, P_{gr}(R))_{\gamma}}$, 
\[
D\,:\ \sum_{i=1}^k \xi_{a_i} y_{a_i}\longmapsto 
\sum_{i=1}^k \xi_{a_i} [y_{a_i}, x_{a_i}]\quad (y_a\in J_a)\,,
\]
there exists ${x\in Q_{m-gr}(P_{gr}(R))_{\gamma}, D=\ad_x|_J, 
[J_a, \xi_a x-\xi_a x_a]=\{0\}, \xi_a x=\xi_a x_a}$ for all ${a\in I}$. Therefore 
${O_{gr}(P_{gr}(R))\subseteq 
Q_{m-gr}(P_{gr}(R))=O_{gr}(Q_{m-gr}(P_{gr}(R)))\subseteq I_{gr}(P_{gr}(R))}$. 

If ${x\in Q_{m-gr}(P_{gr}(R))_{\gamma}}$, ${\gamma\in \Gamma}$, then 
\[
H\ =\ \{z\in O_{gr}(P_{gr}(R))\mid [z, (x)_{O_{gr}(P_{gr}(R))}]\subseteq 
O_{gr}(P_{gr}(R))\}\ =\ O_{gr}(H) \lhd_{gr} O_{gr}(P_{gr}(R))\,,
\]
${L=\{z\in P_{gr}(R)\mid [z, (x)_{P_{gr}(R)}]\subseteq P_{gr}(R)\}\in 
\mathcal{E}_{gr}(P_{gr}(R)), 
L\subseteq H\cap P_{gr}(R)\in \mathcal{E}_{gr}(P_{gr}(R))}$ (see \cite{Mol}, 
Lemma 2.10), ${H\in \mathcal{E}_{gr}(O_{gr}(P_{gr}(R)))}$. 
If ${P=B_{gr}(R)\setminus B}$, ${y\in Q_{m-gr}(P_{gr}(R))_{\sigma}}$, 
${\sigma\in \Gamma}$, ${[y, H]=O_{gr}([y, H])\subseteq P Q_{m-gr}(P_{gr}(R))}$, 
then there exists ${\alpha\in B}$, ${\alpha [y, H]=[\alpha y, H]=\{0\}}$, 
${\alpha y=0}$, ${y\in P Q_{m-gr}(P_{gr}(R))}$ (graded version of 
Proposition 2.4, p. 6). So, 
\[
\Ann_{Q_{m-gr}(P_{gr}(R))_B} (H+P Q_{m-gr}(P_{gr}(R)))/P Q_{m-gr}(P_{gr}(R))\ =\ \{0\}\,,
\]
${(H+P O_{gr}(P_{gr}(R)))/P O_{gr}(P_{gr}(R))\in 
\mathcal{E}_{gr}(O_{gr}(P_{gr}(R))_B)}$ and $Q_{m-gr}(P_{gr}(R))_B$ is a 
$\Gamma$--graded algebra of quotients of $O_{gr}(P_{gr}(R))_B$.
\end{proof}

The ungraded version of Proposition 3.1 is: 

\begin{prop}
If a Lie algebra $R$ is semiprime, then $P(R)$ is an algebra of quotients of 
$R$, ${P(R)\hookrightarrow Q_m(R)}$ identically on $R$, 
${O(P(R))\subseteq Q_m(P(R))=O(Q_m(P(R))), Q_m(P(R))_B}$ is an algebra of 
quotients of $O(P(R))_B$ for any ultrafilter $B$ in $B(R)$.
\end{prop}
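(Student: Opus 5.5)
The plan is to specialize the proof of Proposition 3.1 to the trivial group $\Gamma=\{0\}$. The grading is then automatically finite. Homogeneous ideals and submodules become arbitrary ideals and submodules, and $\mathcal{E}_{gr}$, $\PDer_{gr}$, $\CM_{gr}$, $B_{gr}$, $P_{gr}$, $I_{gr}$, $O_{gr}$ and $Q_{m-gr}$ become $\mathcal{E}$, $\PDer$, $\CM$, $B$, $P$, $I$, $O$ and $Q_m$. The graded form of Proposition 2.4 that was used there is then exactly Proposition 2.4 itself. I will still go through the four assertions in order, checking that each step uses only ungraded facts already available.

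First, $P(R)$ is an algebra of quotients of $R$. Take $0\ne x=\sum_{i=1}^k\alpha_i x_i\in P(R)$ with $\alpha_i\in\CM(R)$ and $x_i\in R$. Put $I=\bigcap_i {}_{\alpha_i}R\in\mathcal{E}(R)$; this lies in $\mathcal{E}(R)$ by the essentiality argument at the start of the proof of Proposition 2.1. By semiprimeness, $\{0\}\ne x\Ad(R)'\cap I\lhd R$ and
\[
\{0\}\ne(x\Ad(R)'\cap I)^2\subseteq[x,I]\Ad(R)'\subseteq R .
\]
Hence $[I,x]\ne\{0\}$, and $[I,x]\subseteq R$ because $\alpha_i I\subseteq R$. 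An ideal of a semiprime Lie algebra is essential exactly when its annihilator is zero, so $\Ann_R I=\{0\}$. By the universal property of $Q_m(R)$ from \cite{Mol}, $P(R)$ embeds into $Q_m(R)$ identically on $R$. In particular we get the tower $R\subseteq P(R)\subseteq Q_m(R)\subseteq I(R)$.

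Second, the orthogonal completeness. Apply the same argument to the semiprime algebra $P(R)$, whose Martindale centroid is again $\CM(R)$. Take a dense orthogonal $\{\xi_a\}\subseteq B(R)$ and elements $\{x_a\}\subseteq Q_m(P(R))$. Choose $J_a\in\mathcal{E}(P(R))$ with $[J_a,\xi_a x_a]\subseteq P(R)$, and set $J=\sum_a\xi_aJ_a$. If $y\in\Ann_{P(R)}J$, then $\xi_a y=0$ for all $a$, so $y=0$ by Remark 2.2; hence $J\in\mathcal{E}(P(R))$. Define $D$ on $J$ by $\sum\xi_{a_i}y_{a_i}\mapsto\sum\xi_{a_i}[y_{a_i},x_{a_i}]$. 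The step I expect to need the most care is checking that $D$ is well defined and is a partial derivation $J\to P(R)$. Well-definedness follows because each $\xi_a$ is a central idempotent commuting with $\ad$ and the $\xi_a$ are orthogonal. The derivation property follows from the Jacobi identity componentwise.

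Axiom 4 for $Q_m(P(R))$ then gives $x\in Q_m(P(R))$ with $\ad_x|_J=D$. Then $[J_a,\xi_ax-\xi_ax_a]=\{0\}$, and axiom 3 gives $\xi_ax=\xi_ax_a$. So $x$ is the orthogonal sum of the $\xi_a x_a$, which shows $Q_m(P(R))=O(Q_m(P(R)))$. Since $P(R)\subseteq Q_m(P(R))$, minimality of the orthogonal completion gives $O(P(R))\subseteq Q_m(P(R))$. Non-singularity of $Q_m(P(R))$ over $\CM(R)$ comes from Lemma 2.3 applied to $[x,I]$, as in the graded proof.

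Third, the statement for an ultrafilter $B$, with $P=B(R)\setminus B$. For $x\in Q_m(P(R))$ let
\[
H=\{z\in O(P(R))\mid[z,(x)_{O(P(R))}]\subseteq O(P(R))\}.
\]
Then $H$ is an ideal with $H=O(H)$. It contains the essential ideal $L$ of $P(R)$ given by \cite{Mol}, Lemma 2.10, so $H\in\mathcal{E}(O(P(R)))$. Suppose $y\in Q_m(P(R))$ satisfies $[y,H]\subseteq PQ_m(P(R))$. We have $[y,H]=O([y,H])$, so Proposition 2.4, p. 6 yields $\alpha\in B$ with $[\alpha y,H]=\{0\}$. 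Then $\alpha y=0$, so $y\in PQ_m(P(R))$. Consequently the image of $H$ in $O(P(R))_B$ has zero annihilator in $Q_m(P(R))_B$ and brackets $x$ into $O(P(R))_B$. This is exactly the definition of $Q_m(P(R))_B$ being an algebra of quotients of $O(P(R))_B$.
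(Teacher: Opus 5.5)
Your proposal is correct and follows the paper's route. The paper gives no separate argument for Proposition 3.2, presenting it as the ungraded ($\Gamma=\{0\}$) case of Proposition 3.1, and your three steps are exactly the steps of that proof: the ideal $I=\bigcap_i {}_{\alpha_i}R$ with $\{0\}\ne(x\Ad(R)'\cap I)^2\subseteq[x,I]\Ad(R)'\subseteq R$; the partial derivation $D$ on $J=\sum_a\xi_aJ_a$ together with axioms 3--4 of $Q_m(P(R))$; and the ideal $H=O(H)$ combined with Proposition 2.4, p.~6. As in the paper, you leave two points implicit: the embedding $Q_m(P(R))\subseteq I(P(R))$, and the equality $O(P(R))\cap PQ_m(P(R))=PO(P(R))$ that makes $O(P(R))_B$ a subalgebra of $Q_m(P(R))_B$.
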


From this, Lemma 2.9 and \cite{Mol}, Proposition 2.7 it follows that for a 
non-degenerate $R$ without $6$--torsion $P(R)$ is non-degenerate, $O(P(R))_B$, 
$Q_m(P(R))_B$ are strongly prime for any ultrafilter $B$ in $B(R)$ 
($P_{gr}(R)$ inherits the (homogeneous) non-degeneracy of 
${(R, \{R_{\gamma}\}_{\gamma\in \Gamma})}$ without $6$--torsion 
\cite{OMol}, Propositions 2.4, 2.7). For an ordered 
${(\Gamma, \leq)}$, the homogeneous non-degeneracy and non-degeneracy of 
${(R, \{R_{\gamma}\}_{\gamma\in \Gamma})}$ are equivalent, since for a 
homogeneously non-degenerate $R$ and ${x=\sum\limits_{i=1}^k x_{\gamma_i}\in R}$, 
${\gamma_j< \gamma_{j+1}}$, ${\ad_x (F \Id_R+\ad(R)) \ad_x=\{0\}}$, 
\[
\ad_{x_{\gamma_i}} (F \Id_R+\ad(R)) \ad_{x_{\gamma_i}}\ =\ \{0\}\,,\quad 
x_{\gamma_i}\ =\ 0\quad (i=k,\ldots, 1)
\]
(${\alpha+\alpha'< \beta+\beta'}$ for all ${\alpha, \alpha', \beta, \beta'\in 
\Gamma}$, ${\alpha< \beta}$, ${\alpha'\leq \beta'}$), ${x=0}$. 

If ${(R, \{R_i\}_{\mathbb{Z}})}$ is an algebra with finite 
$\mathbb{Z}$--grading and ${n\geq 1}$, ${R_i=\{0\}}$ for all ${|i|> n}$, 
then $R$ is also called a \emph{${2 n+1}$--graded algebra}.

In discussing the Jordan version of the maximal ring of quotients from 
\cite{JMQ, OMol} we restrict ourselves to the non-degenerate case, the 
filter of all essential ideals and $F$ with $1/6$. The latter allows us to 
consider linear triple Jordan systems (Jordan pairs) as quadratic and to 
realize quadratic triple Jordan systems (Jordan pairs) through their linear 
specializations (similar to linear and quadratic Jordan algebras over 
$F$ with $1/2$), preserves the $3$--grading of $Q_m(R)$ ($Q_{\mathcal{F}}(R)$) 
for the $3$--graded non-degenerate Lie algebra $R$ and connects the 
conditions of non-degeneracy (McCrimmon and Kostrikin radicals) of Jordan pairs 
and their Tits --- Kantor --- Koecher algebras \cite{Gol6}, Corollary 4.16. 
More general approaches to systems of quotients of Jordan systems are 
described in \cite{Mont, AMc}, constructions of maximal systems of 
$\mathfrak{M}$--quotients (Martindale-like systems of quotients) of 
non-degenerate Jordan systems over $F$ with $1/6$ with respect to any power 
filter of their ideals with zero annihilators $\mathcal{F}$ are considered 
in \cite{JMQ}. 

A linear Jordan algebra $Q$ is an \emph{algebra of $\mathfrak{M}$--quotients of 
a linear Jordan algebra} $R$ (over $F$ with $1/2$) if $R$ is a subalgebra of $Q$ 
and for any ${0\ne q\in Q}$ one can choose ${I\lhd R}$, 
${\Ann_R I=\{x\in R\mid x I=\{x, I, R\}=\{0\}\}=\{0\}}$, 
${\{0\}\ne I q\subseteq R}$. If ${R_T=(R, \{\ ,\ ,\ \})}$ is $R$ as a linear 
triple Jordan system, then 
${(x y) z=1/2 (\{x, y, z\}+\{y, x, z\}), x, y, z\in R}$ and for all 
${I\lhd R_T}$ one has ${(I)_R=I+I R, (I)_R^3\lhd R, (I)_R^3\subseteq I}$ 
\cite{ZShS}, Lemma 3, p. 109 (${(I R)^2\subseteq I}$).
From this and \cite{JMQ}, Lemma 0.4 it follows that for a semiprime $R$ any 
${I\in \mathcal{E}(R_T)}$ contains 
${(I)_R^3\in \mathcal{E}(R)}$ and 
${\{I_T\mid I\in \mathcal{E}(R)\}\subseteq \mathcal{E}(R_T)}$. 
If ${V(S)=(S^{\pm}=S, \{\ ,\ ,\ \}^{\pm}=\{\ ,\ ,\ \})}$ is a linear 
Jordan pair associated with a linear triple Jordan system ${(S, \{\ ,\ ,\ \})}$, 
then for the exchange automorphism 
${\phi_{ex}: (x^{+}, y^{-})\longmapsto (y^{+}, x^{-})}$, ${x, y\in S}$, 
of $V(S)$, ${(I^{+}, J^{+})\in \mathcal{E}(V(S))}$, 
\[
\phi_{ex}((I^{+}, J^{-}))\ =\ (J^{+}, I^{-})\,,\quad 
(I^{+}, J^{+})\cap (J^{+}, I^{-})\ =\ ((I\cap J)^{\pm})\in \mathcal{E}(V(S))\,.
\]
Taking this into account, the maximal algebra of $\mathfrak{M}$--quotients 
$Q_m(R)$ of a non-degenerate $R$ over $F$ with $1/6$, in which all algebras 
of $\mathfrak{M}$--quotients of $R$ are embedded identically on $R$, can be 
described up to an isomorphism identical on $R$, as in \cite{OMol}, 
Theorem 4.11 and \cite{JMQ}, Theorem 5.5: 
\[
Q_m(R)\ =\ Q_m(R_T)\ =\ Q_m(\TKK(V(R_T)))_1\,,
\]
where the Lie algebra $\TKK(V(R_T))$ is the Tits --- Kantor --- Koecher 
algebra of $V(R_T)$, 
\[
Q_m(\TKK(V(R_T)))\ =\ Q_{m-gr}(\TKK(V(R_T)))\ =\ 
\bigoplus_{i=0, \pm 1} Q_{m-gr}(\TKK(V(R_T)))_i\,,
\]
${Q_m(R)=Q_m(\TKK(V(R_T)))_1}$ is a non-degenerate (strongly prime for the 
strongly prime $R$) linear Jordan algebra with 1 and  
${x y=\{x, 1, y\}, \{x, y, z\}=[z, [x, \phi_{ex}''(y)]], [\ ,\ ]}$ is the 
multiplication of $Q_m(\TKK(V(R_T)))$, ${x, y, z\in Q_m(\TKK(V(R_T)))_1}$,
\begin{gather*}
\phi_{ex}'\,:\ a^{+}+T+b^{-}\longmapsto b^{+}+T^{\phi_{ex}}+a^{-}\,,\quad 
T\ =\ \sum_{i=1}^k (\{a_i^{+}, b_i^{-},\ \}^{+}-\{b_i^{-}, a_i^{+},\ \}^{-})\,,
\\
T^{\phi_{ex}}\ =\ \phi_{ex} T \phi_{ex}\ =\ \sum_{i=1}^k 
(\{a_i^{-}, b_i^{+},\ \}^{-}-\{b_i^{+}, a_i^{-},\ \}^{+})\quad 
(a, b, a_i, b_i\in R,\ k\geq 1)\,,
\\ 
\phi_{ex}''\,:\ [(D, J)]\longmapsto [(D^{\phi_{ex}'}, \phi_{ex}'(J))]\quad 
(D\in \PDer(J, \TKK(V(R_T))),\ J\in \mathcal{E}_{gr}(\TKK(V(R_T))))\,,
\end{gather*} 
${D^{\phi_{ex}'}=\phi_{ex}' D \phi_{ex}'}$, 
${\PDer(J, \TKK(V(R_T)))=\PDer_{gr}(J, \TKK(V(R_T)))}$ \cite{JMQ}, p. 1.9, 
($\phi_{ex}$, $\phi_{ex}'$ and $\phi_{ex}''$ are automorphisms of $V(R_T)$, 
$\TKK(V(R_T))$ and $Q_m(\TKK(V(R_T)))$ of order 2; 
${Q_m(\TKK(V(R_T)))=Q_{m-gr}(\TKK(V(R_T)))}$ up to a homogeneous isomorphism),
 
\begin{prop}
If $R$ is a non-degenerate linear Jordan algebra over a ring $F$ with $1/6$, 
\linebreak then $P(R)$ is an algebra of $\mathfrak{M}$--quotients of $R$, 
${P(R)\hookrightarrow Q_m(R)}$ identically on $R$,
$O(P(R))\subseteq Q_m(P(R))=O(Q_m(P(R)))$ and $Q_m(P(R))_B$ is an algebra 
of $\mathfrak{M}$--quotients of $O(P(R))_B$ for any ultrafilter $B$ in 
$B(R)$. 
\end{prop}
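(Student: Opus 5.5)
I will follow the proof of Proposition 3.1 (and its ungraded version, Proposition 3.2), replacing Lie brackets by the Jordan operators. I will use two facts: the non-degeneracy of $R$ is inherited by $P(R)$ (the remarks after Corollary 2.7, using the McCrimmon radical), and the essential ideals of $R$ and of the triple system $R_T$ are comparable, via $(I)_R^3\in\mathcal{E}(R)$ for $I\in\mathcal{E}(R_T)$.

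\emph{First step.} I will show that $P(R)$ is an algebra of $\mathfrak{M}$--quotients of $R$. Let $0\ne x=\sum_{i=1}^k\alpha_i x_i\in P(R)$ with $\alpha_i\in\CM(R)$ and $x_i\in R$. Put $I=\bigcap_i {}_{\alpha_i}R\in\mathcal{E}(R)$; then $I x\subseteq\sum_i\alpha_i(I x_i)$. I must check that $\alpha_i(Ix_i)\subseteq R$. I will get this by shrinking $I$ to an ideal $D\in\mathcal{E}(R)$, as in the proof of Proposition 2.1, so that $D(xM(R)')\subseteq R$.

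Nonvanishing comes from property 3 of Proposition 2.1. The ideal $xM(R)'\cap D$ of $R$ is nonzero, and its square lies in $D x M(R)'$. Since $M(R)'$ is generated by multiplications, this gives $Dx\ne\{0\}$.

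Any ideal with zero annihilator in the Jordan sense ($xI=\{x,I,R\}=0$) is then available. For a non-degenerate $R$, essential ideals have zero annihilator: $\Ann_R I\cap I$ is a trivial ideal, hence zero. This yields the embedding $P(R)\hookrightarrow Q_m(R)$ identically on $R$, by maximality of $Q_m(R)$. Equivalently, I can pass through the TKK picture, $Q_m(R)=Q_m(\TKK(V(R_T)))_1$, and Proposition 3.2.

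\emph{Second step.} I will show that $O(P(R))\subseteq Q_m(P(R))=O(Q_m(P(R)))$. Write $Q_m(P(R))=Q_m(\TKK(V(P(R)_T)))_1$. The $\CM(R)$--action extends to $\TKK(V(P(R)_T))$ and to its maximal graded algebra of quotients; $\CM(R)$ is the centroid of the grade-one part, extended diagonally. Apply Proposition 3.2 (or the graded Proposition 3.1 with $\Gamma=\mathbb Z$ and 3-grading, which is finite) to the Lie algebra $\TKK(V(P(R)_T))$. This gives $Q_{m-gr}=O_{gr}(Q_{m-gr})$ containing $O_{gr}$ of the TKK algebra, and taking grade-one components gives the claim. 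Alternatively, I will argue directly. Take a dense orthogonal $\{\xi_a\}\subseteq B(R)$ and $x_a\in Q_m(P(R))$ with $J_a\in\mathcal{E}(P(R))$ and $J_a\xi_a x_a\subseteq P(R)$. Set $J=\sum_a\xi_aJ_a$; this is essential. Define the map $\sum\xi_a y_a\mapsto\sum\xi_a y_a x_a$ and the corresponding triple-product maps. These glue into a "partial multiplication" whose extension property, as in axiom 4 of \cite{JMQ}, yields $x$ with $\xi_a x=\xi_a x_a$. Non-singularity of $Q_m(P(R))$ as a $\CM(R)$--module follows as in Proposition 3.1.

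\emph{Third step.} I will show that $Q_m(P(R))_B$ is an algebra of $\mathfrak{M}$--quotients of $O(P(R))_B$. For $x\in Q_m(P(R))$, let $H$ be the set of $z\in O(P(R))$ with $z(x)\subseteq O(P(R))$ and $\{z,(x),O(P(R))\}\subseteq O(P(R))$. Then $H=O(H)$ is an ideal containing an essential ideal of $P(R)$, as in \cite{Mol}, Lemma 2.10, and its Jordan analogue in \cite{JMQ}. Suppose $y$ satisfies $yH\cup\{y,H,Q\}\subseteq PQ_m(P(R))$. By Proposition 2.4, p.~6 (these sets are orthogonally complete), some $\alpha\in B$ kills them, so $\alpha y=0$. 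Hence the image of $H$ has zero annihilator and is the required ideal.

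The main obstacle is the second step: correctly transporting the $\CM(R)$-structure and the orthogonal sums through the TKK construction, or equivalently verifying the gluing axiom for Jordan $\mathfrak{M}$--quotients. I would try the TKK route first, so that Proposition 3.2 can be reused.
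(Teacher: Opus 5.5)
\textbf{Steps 1 and 2.} Your first two steps follow the paper's outline. The first statement comes from the properties of $P(R)\cong{}_R Q$. The inclusion $O(P(R))\subseteq Q_m(P(R))=O(Q_m(P(R)))$ is read off in degree one from $Q_m(P(R))=Q_m(\TKK(V(P(R)_T)))_1$ and Proposition 3.1. However, the point you leave open in Step 2 is exactly what has to be supplied. Proposition 3.1 completes with respect to $B_{gr}(\TKK(V(P(R)_T)))$, not $B(R)$, and nothing you say shows that $\CM(R)$ is the whole graded Martindale centroid of the TKK algebra. The paper never identifies the two. It uses only $\CM(R)\Id_{P_{gr}(\TKK(V(P(R)_T)))}\subseteq\CM_{gr}(\TKK(V(P(R)_T)))$ together with Remark 2.2. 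That remark gives, for $\Ann_{\CM(R)}E=\{0\}$, that ${E'}^{\perp}=\{x\in\TKK(V(P(R)_T))\mid Ex=\{0\}\}=\{0\}$, so $B(R)$--dense orthogonal families stay dense. The paper then reruns the proof of Proposition 3.1 for the $B(R)$--completion $O'_{gr}$, whose $\pm 1$ components are $O(P(R))$.

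\textbf{The gap in Step 3.} Grant a Jordan analogue of [Mol], Lemma 2.10, which you only gesture at, so that $H$ is an ideal containing an essential ideal of $P(R)$. You then show that the image $\bar H$ has zero annihilator and conclude that it is ``the required ideal''. In the Lie case of Proposition 3.1 this inference is valid, because the annihilator condition and the nonvanishing condition are the same bracket $[y,H]$. Here they differ. The Jordan annihilator is $\{y\mid yI=\{y,I,R\}=\{0\}\}$, while the definition of an algebra of $\mathfrak{M}$--quotients demands $\{0\}\ne\bar H\bar x$. So $\Ann\bar H=\{0\}$ does not exclude $\bar H\bar x=\{0\}$ together with $\{\bar x,\bar H,O(P(R))_B\}\ne\{0\}$.

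To close this, note that Proposition 2.4, p.~6 gives $H(\alpha x)=\{0\}$ for some $\alpha\in B$. You would then need a faithfulness statement: if $L\lhd P(R)$ is essential, $q\in Q_m(P(R))$ and $Lq=\{0\}$, then $q=0$. This is not the defining axiom and requires its own non-degeneracy argument. The same conflation appears in your Step 1: $D(xM(R)')\ne\{0\}$ does not yield $Dx\ne\{0\}$ merely because $M(R)'$ is generated by multiplications.

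\textbf{How the paper avoids this.} The paper never handles Jordan annihilators directly and stays in the Lie setting instead:
\begin{enumerate}
\item It proves $\Ker\psi_{B,\TKK}=P\,\TKK(V(O(P(R))_T))$, killing $\phi_{\pm}(O(P(R)))$ by some $\alpha_{\pm}\in B$. Hence reduction modulo $B$ commutes with $\TKK$.
\item The $O'_{gr}$--version of Proposition 3.1 then makes $Q_m(\TKK(V(P(R)_T)))_B$ a $3$--graded algebra of quotients of the strongly prime $O'_{gr}(\TKK(V(P(R)_T)))_B$.
\item A lemma on $3$--graded Lie algebras with $K=\{x\in L_0\mid [L_{-1}+L_1,x]=\{0\}\}=\{0\}$ turns this into a pair of $\mathfrak{M}$--quotients.
\item [JMQ], Secs.~4, 5 then pass to the triple system and to the algebra with $1$.
\end{enumerate}
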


\begin{proof}
The first statement follows immediately from the properties of 
${P(R)\cong {}_R Q}$ and we can assume that 
${R\subseteq \End(Q_m(R))_{M(R)'} R\subseteq P(R)\subseteq 
Q_m(R)\subseteq I(R)}$. Due to 
\begin{multline*}
\CM(R) \Id_{P_{gr}(\TKK(V(P(R)_T)))}\subseteq \CM_{gr}(\TKK(V(P(R)_T)))\,,
\\
\shoveleft{
{E'}^{\perp}\ =\ \{x\in I_{gr}(P_{gr}(\TKK(V(P(R)_T))))\mid E' x=\{0\}\}\ =\ 
{E'}^{\perp}\cap \TKK(V(P(R)_T))\ =}
\\ 
\{x\in \TKK(V(P(R)_T))\mid E x=\{0\}\}\ =\ \{0\}\,,\quad 
\Ann_{\CM_{gr}(\TKK(V(P(R)_T)))} E'\ =\ \{0\}
\end{multline*}
for all ${E\subseteq \CM(R)}$, ${\Ann_{\CM(R)} E=\{0\}}$, 
${E'=E \Id_{P_{gr}(\TKK(V(P(R)_T)))}}$ (Remark 2.2),
it follows from the proof of Proposition 3.1 that up to isomorphism 
\begin{multline*}
({\TKK(V(O(P(R))_T))}_{\pm 1})\ =\ 
(O_{gr}'(\TKK(V(P(R)_T)))_{\pm 1})\,,
\\
\shoveleft{\TKK(V(O(P(R))_T))\subseteq O_{gr}'(\TKK(V(P(R)_T)))\subseteq}
\\ 
Q_{m-gr}(\TKK(V(P(R)_T)))\ =\ O_{gr}'(Q_{m-gr}(\TKK(V(P(R)_T))))\,,
\end{multline*}
where $O_{gr}'(S)$ is the set of all 
${{\sum\limits_{a\in I}}^{\perp} \xi_a x_a}$ for a dense orthogonal 
${\{\xi_a\}_{a\in I}\subseteq B(R)}$, $\{x_a\}_{a\in I}\subseteq S
\subseteq I_{gr}(P_{gr}(\TKK(V(P(R)_T))))$ 
($O_{gr}'(S)$ inherits the property of $S$ to be a $\CM(R)$--subalgebra of 
$I_{gr}(P_{gr}(\TKK(V(P(R)_T))))$), 
and ${O(P(R))\subseteq Q_m(P(R))=O(Q_m(P(R)))}$.

The epimorphism of linear Jordan algebras 
${\psi_B: O(P(R))\longrightarrow O(P(R))_B}$ induces an epimorphism of linear 
Jordan pairs ${\psi_{B, V}: V(O(P(R))_T)\longrightarrow V({O(P(R))_T}_B)}$ 
and a homogeneous epimorphism of 3--graded Lie algebras 
\begin{gather*}
\psi_{B, \TKK}\,:\ 
\TKK(V(O(P(R))_T))\longrightarrow \TKK(V({O(P(R))_T}_B))\,,
\\
\psi_{B, \TKK}\,:\ x^{+}+\sum_{i=1}^k [a_i^{+}, b_i^{-}]+y^{-}
\longmapsto 
\psi_B(x)^{+}+\sum_{i=1}^k [\psi_B(a_i)^{+}, \psi_B(b_i)^{-}]+\psi_B(y)^{-}
\end{gather*}
${x, y, a_i, b_i\in O(P(R)), k\geq 1}$ with kernel 
\[
\Ker \psi_{B, \TKK}\ =\ 
\Ker \psi_B^{+}+
\{z\in [O(P(R))^{+}, O(P(R))^{-}]\mid [O(P(R))^{\pm}, z]\subseteq 
\Ker \psi_B^{\pm}\}+\Ker \psi_B^{-}\,,
\]
${\Ker \psi_B=P O(P(R)), P=B(R)\setminus B}$ \cite{Gol6}, Sec. 4, 
observations before Example 14. If $a_i, b_i\in O(P(R))$, ${k\geq 1}$,
\[
z\ =\ \sum_{i=1}^k [a_i^{+}, b_i^{-}]\ =\ 
\sum_{i=1}^k (\{a_i^{+}, b_i^{-},\ \}^{+}-\{b_i^{-}, a_i^{+},\ \}^{-})\,,\quad 
[O(P(R))^{\pm}, z]\subseteq P O(P(R))^{\pm}\,,
\]
then ${\phi_{\pm}(O(P(R)))=O(\phi_{\pm}(O(P(R))))\subseteq P O(P(R))}$, 
${\phi_{+}=\sum\limits_{i=1}^k \{a_i, b_i,\ \}}$, 
${\phi_{-}=\sum\limits_{i=1}^k \{b_i, a_i,\ \}}$, \linebreak and there are 
${\alpha_{\pm}\in B, \alpha_{\pm} \phi_{\pm}(O(P(R)))=\{0\}}$, 
${\alpha z=0, \alpha=\alpha_{+} \alpha_{-}\in B}$ (Proposition 2.4, p. 6), 
${z\in P [O(P(R))^{+}, O(P(R))^{-}]}$. Hence  
${\Ker \psi_{B, \TKK}=P \TKK(V(O(P(R))_T))}$,
\[ 
({\TKK(V(O(P(R))_T))_B}_{\pm 1})\ =\ 
({\TKK(V({O(P(R))_B}_T))}_{\pm 1})\ =\ 
({O_{gr}'(\TKK(V(P(R)_T)))_B}_{\pm 1})\,,
\]
${Q_{m-gr}(\TKK(V(P(R)_T)))_B=Q_m(\TKK(V(P(R)_T)))_B}$ is a $3$--graded 
algebra of quotients of $O_{gr}'(\TKK(V(P(R)_T)))_B$ and, due to the 
strong primeness of $O_{gr}'(\TKK(V(P(R)_T)))_B$, its algebra of quotients 
(the proofs of Proposition 3.1 and Lemma 2.9 (their adaptations for the 
$O_{gr}'$--version of orthogonal completion), \cite{OMol}, Proposition 2.7, 
\cite{Gol6}, Remark 4.32 and comments to it). 

To complete the proof we will 
need some information about $3$--graded Lie algebras, concepts related to 
linear Jordan pairs are given below after Proposition 3.5. Let 
${(L, \{L_i\}_{|i|\leq 1})}$ be one of such Lie algebras, 
${K=\{x\in L_0\mid [L_{-1}+L_1, x]=\{0\}\}}$. If 
${\ad_x \ad(L) \ad_y\ne \{0\}}$ for all ${0\ne x, y\in L_i}$, 
${i=0}$, ${\pm 1}$ (this is true, for example, for the strong prime 
$L$ \cite{GK}, Theorem 1.3), then either ${K=L}$, 
${L_{\pm 1}=\{0\}}$, or ${K=\{0\}}$, due to 
${K\lhd L, [[[L, x], L], y]\subseteq [K, y]=\{0\}}$ for all 
${x\in K, y\in L_{\pm 1}}$. If ${K=\{0\}}$, the linear Jordan pair 
${(L_{\pm 1}, \{\ ,\ ,\ \}^{\pm}=[[\ ,\ ],\ ])}$ is semi\-prime, then for any 
${\{0\}\ne I\lhd_{gr} L}$, ${\{0\}\ne (I_{\pm})\lhd (L_{\pm})}$ 
(otherwise ${I_{-1}+I_1=\{0\}}$, $I=I_0\subseteq K=\{0\}$?!),
\[
[[I_1, I_{-1}], I_1]+[[I_{-1}, I_1], I_{-1}]\ \ne\ \{0\}\,,\quad 
I^2\ =\ [I_{-1}, I_0]\oplus [I_{-1}, I_1]\oplus [I_1, I_0]\ \ne\ \{0\}\,,
\] 
$L$ is homogeneously semiprime. If ${K=\{0\}}$, ${I\lhd_{gr} L}$, 
${I_V=I_{-1}\oplus ([I_{-1}, L_1]+[I_1, L_{-1}])\oplus I_1}$, 
${\Ann_{(L_{\pm 1})} (I_{\pm 1})=(J_{\pm 1})}$, then 
${[[J_{\pm 1}, I_{\mp 1}], L_{\pm 1}]=
[[I_{\pm 1}, L_{\mp 1}], J_{\pm 1}]=\{0\}}$,  
${J_{\pm 1}\subseteq \Ann_L I_V}$ and ${I_V\lhd L}$.
If $L$ is homogeneously semiprime, ${K=\{0\}, 
(Q, \{Q_i\}_{|i|\leq 1})}$ is a $3$--graded algebra of quotients of $L$, 
then, as in \cite{OMol}, the proof of Theorem 4.5, it can be shown that 
$(Q_{\pm 1})$ is a pair of $\mathfrak{M}$--quotients of $(L_{\pm 1})$. 
Indeed, if ${0\ne q_{\pm 1}\in Q_{\pm 1}}$, ${I\lhd_{gr} L}$, 
${\Ann_L I=\{0\}}$ (equivalent to ${I\in \mathcal{E}(L)}$ (homogeneous 
semiprimeness of $L$)), ${\{0\}\ne [I, q_{\pm 1}]\subseteq L}$, then 
${I\cap J\ne \{0\}}$, ${I_{-1}\cap J_{-1}+I_1\cap J_1\ne \{0\}}$ for any 
${\{0\}\ne J\lhd_{gr} L}$ 
(otherwise ${I\cap J=I_0\cap J_0\subseteq K=\{0\}}$?!), 
${I_V\in \mathcal{E}(L), \Ann_L I_V=\Ann_W I_V=\{0\}}$ 
\cite{Mol}, Lemma 2.11 and hence 
${\Ann_{(L_{\pm 1})} (I_{\pm 1})=\{0\}}$ (see above), 
\[
\{0\}\ \ne\ [I_V, q_{\pm 1}]\ =\ [[I_{-1}, L_1]+[I_1, L_{-1}], q_{\pm 1}]\oplus 
[I_{\mp 1}, q_{\pm 1}]\subseteq [I, q_{\pm 1}]\subseteq L_{\pm 1}\oplus L_0\,,
\] 
${[[I_{\mp 1}, q_{\pm 1}], L_{\pm 1}]\ne \{0\}}$ (${K=\{0\}}$) or (and) 
${[[q_{\pm 1}, I_{\mp 1}], L_{\pm 1}]+[[q_{\pm 1}, L_{\mp 1}], I_{\pm 1}]\ne 
\{0\}}$. Applying these conclusions to ${L=O_{gr}'(\TKK(V(P(R)_T)))_B}$ and 
${Q=Q_m(\TKK(V(P(R)_T)))_B}$, we obtain that 
${({Q_m(\TKK(V(P(R)_T)))_B}_{\pm 1})}$ is a pair of $\mathfrak{M}$--quotients 
of $V(O(P(R))_T)_B$, the linear triple Jordan system (algebra with 1) 
${{Q_m(\TKK(V(P(R)_T)))_B}_1}$ is an algebra of $\mathfrak{M}$--quotients 
of ${O(P(R))_T}_B$ ($O(P(R))_B$) \cite{JMQ}, Secs. 4, 5.
\end{proof}

Let us briefly dwell on the analogs of the obtained conclusions for triple 
Jordan and Lie systems.
Let ${(R, \{\ ,\ ,\ \})}$ be a linear triple system over $F$ (a $F$--module 
$R$ with trilinear\linebreak composition ${\{\ ,\ ,\ \}: R^3\longrightarrow R}$). 
Recall that the ideals of $R$ are its $F$--submodules $I$ such that 
${\{I, R, R\}+\{R, I, R\}+\{R, R, I\}\subseteq I}$, $R$ is called semiprime 
($R$--semiprime) if ${\{I, I, I\}\ne \{0\}}$ 
(${\{I, I, R\}+\{I, R, I\}+\{R, I, I\}\ne \{0\}}$) for all 
${\{0\}\ne I\lhd R}$, and prime ($R$--prime) if $R$ semi\-prime 
($R$--semiprime) and ${I\cap J\ne \{0\}}$ for all ${\{0\}\ne I, J\lhd R}$. 
For the $R$--semiprime $R$, one can define the central closure 
${P(R)=\End(I(R))_{M(R)'}}$ and the Martindale centroid  
$\CM(R)=\End(P(R))_{M(R)'}$, where  
\[
M(R)'\ =\ F \Id_R+M(R)\,,\quad 
M(R)\ =\ \langle \{x, y,\ \}, \{x,\ ,y\}, \{\ , x, y\}\mid 
x, y\in R\rangle\,,
\]
$\CM(R)$ is commutative and regular (a field for the $R$--prime $R$), 
${P(R)=\CM(R) R}$ is a $R$--semiprime ($R$--prime for the $R$--prime $R$) 
linear triple system over $\CM(R)$ with composition ${\{\ ,\ ,\ \}}$, 
continued from $R$ by $\CM(R)$--linearity \cite{Raz, Gol6}. 

The construction of orthogonal completion for $R$--semiprime linear triple 
systems is similar to that described above for semiprime linear algebras. 
Moreover, it can be easily generalized to semiprime algebras of signature 
$\Omega$ from \cite{Raz}.

For a non-degenerate linear triple Jordan system $R$  
(${U_x=\{x,\ , x\}\ne 0}$ for all ${0\ne x\in R}$) without $6$--torsion, 
the following are equivalent: primeness; $R$--primeness; 
${I\cap J\ne \{0\}}$ for all ${\{0\}\ne I, J\lhd R}$; 
${\{U_x y\mid x\in I,\ y\in J\}\ne \{0\}}$ for all 
${\{0\}\ne I, J\lhd R}$ \cite{ACLM}, \cite{LP}, p. 2.1, 2.2. 

\begin{lemma}
If a linear triple Jordan system ${(R, \{\ ,\ ,\ \})}$ over a ring $F$ 
with $1/6$ is non-dege\-nerate, $B$ is an ultrafilter in $B(R)$, then the 
linear triple Jordan systems $O(R)_B$, $O(P(R))_B$ are strongly prime.
\end{lemma}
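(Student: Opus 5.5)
The plan is to copy the proof of Lemma 2.8, with the triple product playing the role of the algebra product. Fix the notation first. Let ${P=B(R)\setminus B}$ and let $S$ be either $O(R)$ or $O(P(R))$. For strong primeness of a triple Jordan system over $F$ with $1/6$ we use the criterion quoted just before the statement: non-degeneracy together with ${\{U_x y\mid x\in I,\ y\in J\}\ne \{0\}}$ for all nonzero ideals $I, J$. Equivalently, we can work elementwise with the sets ${I_{x,y}=U_x S U_y}$, or ${\{x, S, y\}}$, in the spirit of \cite{BMS}. This elementwise form is what lets us exploit orthogonal completeness.

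\textbf{Step 1: non-degeneracy of $S_B$.} Take ${x\in S}$ with ${U_x S\subseteq P S}$. The set $U_x S$ equals its own orthogonal completion. This is because ${\sum^{\perp}\xi_a U_x z_a = U_x \sum^{\perp}\xi_a z_a}$, which follows from the componentwise definition of the operations (Proposition 2.4, p. 3). So Proposition 2.4, p. 6 supplies ${\alpha\in B}$ with ${\alpha U_x S=U_{\alpha x}S=\{0\}}$, using ${\alpha^3=\alpha}$. Now $S$ is non-degenerate: $P(R)$ inherits non-degeneracy from $R$ as in the remarks after Lemma 2.6 (McCrimmon radical, hereditary and special), and $O(S)$ inherits it from the argument with ${f(x_1,x_2)=U_{x_1}x_2}$. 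Hence ${\alpha x=0}$, i.e. ${x\in PS}$.

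\textbf{Step 2: the prime condition.} Take ${x, y\in S}$ with ${\{x,S,y\}\subseteq PS}$, or more generally ${U_{(x)}(y)\subseteq PS}$ for the generated ideals. As in Lemma 2.8, orthogonal completeness and Proposition 2.4, p. 6 let us replace $x$ by $\alpha x$ with ${\alpha\in B}$ and assume ${\{x,S,y\}=\{0\}}$. Next we argue that ${(x)_S\cap (y)_S=\{0\}}$. Take any ${Q\lhd S}$ with $S/Q$ strongly prime. Then ${\{x,S,y\}\subseteq Q}$ forces ${x\in Q}$ or ${y\in Q}$, by the triple analogue of \cite{BMS}, Theorems 1, 2 \cite{ACLM, LP}. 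Since the non-degenerate $S$ is a subdirect product of strongly prime systems (speciality of the McCrimmon radical for triple systems over $F$ with $1/6$), the intersection ${(x)_S\cap (y)_S}$ lies in every such $Q$, so it is zero.

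\textbf{Step 3: separating idempotent.} Complement $\CM(R)(x)_S$ to an essential submodule that contains $(y)_S$. Quasi-injectivity of $O(P(R))$ (Proposition 2.4, p. 4; the start of the proof of Lemma 2.5) then gives ${\beta\in B(R)}$ with ${\beta x=x}$ and ${\beta y=0}$. Since $B$ is an ultrafilter, either ${\beta\in B}$, giving ${y\in PS}$, or ${\Id_{P(R)}-\beta\in B}$, giving ${x\in PS}$. Combined with Step 1, this shows $S_B$ is strongly prime.

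The main obstacle is Step 2. We need a triple-system version of the \cite{BMS} criterion, namely that for strongly prime $S/Q$ the condition ${\{x,S,y\}\subseteq Q}$ forces ${x\in Q}$ or ${y\in Q}$. We also need subdirect decomposition of non-degenerate triple systems into strongly prime ones. I would obtain both by passing to the associated Jordan pair $V(S)$, or to $\TKK(V(S))$, where \cite{Gol6}, Corollary 4.16 links the radicals, and then pulling back, as in the proof of Proposition 3.3. If that is awkward, an alternative is to pass to $\TKK$ and apply Lemma 2.9, using that the orthogonal completion commutes with taking $\TKK$ (proof of Proposition 3.3).
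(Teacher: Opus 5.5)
Your overall scheme matches the paper's. You take an elementwise set $I_{x,y}$ given by a fixed operator pattern, so that $I_{x,y}=O(I_{x,y})$ and Proposition~2.4, p.~6 gives $\alpha\in B$ with $I_{\alpha x,y}=\{0\}$. Strongly prime quotients and speciality of the McCrimmon radical then give $(\alpha x)_S\cap(y)_S=\{0\}$, and a separating idempotent plus the ultrafilter dichotomy finish the proof. The problem is the set you feed into this scheme: Step~2 rests on a false statement.

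For Jordan triple systems it is not true that, in a strongly prime $S/Q$, the inclusion $\{x,S,y\}\subseteq Q$ forces $x\in Q$ or $y\in Q$. So no detour through $V(S)$ or $\TKK$ can supply it. Take the Jordan pair $R^{\pm}=\mathbb{Q}$ with $\{x,y,z\}^{\pm}=xyz$, and its triple system $\Sigma=R^{-}\oplus R^{+}$.
\begin{itemize}
\item $\Sigma$ is non-degenerate and simple, hence strongly prime. A nonzero ideal contains $R^{-}$ or $R^{+}$, and if it contains $R^{\pm}$ it also contains $\{R^{\mp},R^{\pm},R^{\mp}\}=R^{\mp}$.
\item Yet for $x=1\in R^{+}$ and $y=1\in R^{-}$ one has $\{x,\Sigma,y\}=\{0\}$, because the product of $\Sigma$ vanishes when the two outer arguments lie in different components.
\item Likewise $U_xU_zU_y=0$ for all $z$, so your variant $U_xSU_y$ fails too.
\item Here $\CM(\Sigma)=\mathbb{Q}$, $B(\Sigma)=\{0,\Id_{\Sigma}\}$ and $O(\Sigma)_B=\Sigma$. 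Your Steps~2--3 would therefore ``prove'' that $x=0$ or $y=0$.
\end{itemize}

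The correct elemental criterion, \cite{ACLM}, Theorem~2.9, needs $U$-chains of both lengths. The paper takes
\[
I_{x,y}=\{z_1U_yU_{z_2}U_x+z_3U_yU_{z_4}U_{z_5}U_x\},
\]
and in the example $U_xU_zU_wU_y\neq 0$. With this set the rest of your outline goes through verbatim. It is still a fixed pattern, quadratic in $x$, so $\alpha I_{x,y}=I_{\alpha x,y}$ for $\alpha\in B(R)$. Your Step~1 also becomes superfluous, since the criterion at $x=y$ already contains non-degeneracy.

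Your fallbacks do not close the gap.
\begin{itemize}
\item Passing to generated ideals, $U_{(x)}(y)\subseteq PS$, destroys orthogonal completeness, because it involves sums of unbounded length. This is exactly why Lemma~2.5 needs its bounded-length hypothesis. Without it, Proposition~2.4, p.~6 no longer applies.
\item The $\TKK$ route would require re-proving Lemma~2.9 for the completion over $B(R)$ inside the graded hull. It would also require transferring strong primeness back from $\TKK(V(S_B))$ to $S_B$. Neither is carried out.
\end{itemize}
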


\begin{proof}
Let ${P=B(R)\setminus B}$, ${S=O(R), O(P(R))}$, ${x, y\in S}$ and 
\[
I_{x, y}\ =\ \{z_1 U_y U_{z_2} U_x+z_3 U_y U_{z_4} U_{z_5} U_x
\mid z_i\in R\}\subseteq P S\,.
\]
Similar to linear Jordan algebras, it can be shown that $S$ and $P(R)$ inherit 
the non-degeneracy of $R$. Since ${I_{x, y}=O(I_{x, y})}$, there is 
${\alpha\in B}$, ${\alpha I_{x, y}=I_{\alpha x, y}=\{0\}}$ 
(Proposition 2.4, p. 6). If ${Q\lhd S}$, $S/Q$ is 
strongly prime, then ${I_{\alpha x, y}\subseteq Q}$, ${\alpha x\in Q}$ or 
(and) ${y\in Q}$, ${(\alpha x)_S\cap (y)_S\subseteq Q}$ \cite{ACLM}, 
Theorem 2.9. Consequently, due to the non-degeneracy of $S$ and the 
speciality of $Mc$ \cite{ZelM}, ${(\alpha x)_S\cap (y)_S=\{0\}}$. 
It remains to repeat the reasoning from the end of the proof of Lemma 2.8 
with replacement of \cite{BMS}, Theorems 1, 2 by \cite{ACLM}, Theorem 2.9.
\end{proof}

A linear triple Jordan system ${(Q, \{\ ,\ ,\ \})}$ is a \emph{Jordan system 
of $\mathfrak{M}$--quotients of a linear Jordan system} ${(R, \{\ ,\ ,\ \})}$ 
if $R$ is a subsystem of $Q$, 
for any ${0\ne q\in Q}$ one can choose ${I\lhd R}$, 
${\Ann_R I=\{x\in R\mid \{x, R, I\}=\{x, I, R\}=\{R, x, I\}=\{0\}\}=\{0\}}$, 
\[
\{0\}\ne \{q, I, R\}+\{q, R, I\}+\{I, q, R\}\subseteq R\,.
\]

The maximal system of $\mathfrak{M}$--quotients $Q_m(R)$ of a non-degenerate 
$R$ over $F$ with $1/6$, into which all systems of $\mathfrak{M}$--quotients 
of $R$ are embedded identically on $R$, can be described up to an isomorphism,  
identical on $R$, as ${Q_m(R)=Q_m(\TKK(V(R)))_1}$, where  
$Q_m(\TKK(V(R)))=Q_{m-gr}(\TKK(V(R)))$ (see above, \cite{OMol}, Theorem 4.10, 
\cite{JMQ}, Theorem 4.6). 

In view of the identities 
\begin{gather*}
\{x, y, z\}\ =\ \{z, y, x\}\,,
\\
\{x, y, \{a, b, c\}\}-\{a, b, \{x, y, c\}\}\ =\ 
\{\{x, y, a\}, b, c\}-\{a, \{y, x, b\}, c\} 
\end{gather*}
defining linear triple Jordan systems, if $R$ is a subsystem of the 
system $R'$, ${X, I\subseteq R'}$, ${(I)_R\subseteq I}$, where 
${(A)_R=A M^{R'}(R)'}$ for all ${A\subseteq R'}$, then 
\begin{multline*}
\{I, R, \{R, R, X\}\}\subseteq \{R, R, \{I, R, X\}\}+
\{\{I, R, R\}, R, X\}+\{R, \{R, I, R\}, X\}\subseteq
\\ 
\shoveright{
\{R, R, \{I, R, X\}\}+\{I, R, X\}+\{R, I, X\}\,,}
\\
\shoveleft{
\{R, I, \{R, R, X\}\}\subseteq \{R, R, \{R, I, X\}\}+
\{I, R, X\}+\{R, I, X\}\,,}
\\
\shoveleft{
\{I, R, \{R, X, R\}\}\subseteq \{R, X, \{I, R, R\}\}+
\{\{I, R, R\}, X, R\}+\{R, \{R, I, X\}, R\}\subseteq}
\\
\shoveright{
\{R, X, I\}+\{I, R, X\}+\{R, \{R, I, X\}, R\}\,,}
\\
\shoveleft{
\{R, I, \{R, X, R\}\}\subseteq \{R, X, I\}+\{I, X, R\}+
\{R, \{I, R, X\}, R\}\,,}
\\
\shoveleft{
\{R, \{R, R, X\}, I\}\subseteq \{\{R, R, R\}, X, I\}+
\{R, X, \{R, R, I\}\}+\{R, R, \{R, X, I\}\}\subseteq}
\\
\shoveright{
\{R, X, I\}+\{R, R, \{R, X, I\}\}\,,}
\\
\shoveleft{
\{R, \{R, X, R\}, I\}\subseteq \{\{X, R, R\}, R, I\}+
\{R, R, \{X, R, I\}\}+\{X, R, \{R, R, I\}\}\subseteq}
\\
\shoveright{
\{R, R, \{I, R, X\}\}+\{I, R, X\}+\{R, I, X\}\,,}
\\
\{(X)_R, I, R\}+\{(X)_R, R, I\}+\{I, (X)_R, R\}\subseteq  
(\{X, I, R\}+\{X, R, I\}+\{I, X, R\})_R\,.
\end{multline*}

\begin{prop}
If ${(R, \{\ ,\ ,\ \})}$ is a non-degenerate linear triple Jordan system over 
a ring $F$ with $1/6$, then $P(R)$ is a linear Jordan system of 
$\mathfrak{M}$--quotients of $R$, ${P(R)\hookrightarrow Q_m(R)}$ identically 
on ${R, O(P(R))\subseteq Q_m(P(R))=O(Q_m(P(R))), Q_m(P(R))_B}$ is a linear 
Jordan system of $\mathfrak{M}$--quotients of $O(P(R))_B$ for any ultrafilter 
$B$ in $B(R)$. 
\end{prop}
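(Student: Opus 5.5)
The argument will follow the proof of Proposition 3.3 almost verbatim, with the linear Jordan algebra replaced by the triple system $R$ and $\TKK(V(R_T))$ by $\TKK(V(R))$.

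\emph{Step 1: $P(R)$ is a system of $\mathfrak{M}$--quotients of $R$.} Let $0\ne q=\sum_{i=1}^k \nu_i q_i\in P(R)$ with $\nu_i\in\CM(R)$, $q_i\in R$. As in Proposition 2.1, put $D=\bigcap_i {}_{\nu_i}R\in\mathcal{E}(R)$. Then
\[
\{D,q,R\}+\{q,D,R\}+\{q,R,D\}\subseteq R,
\]
and $\Ann_R D=\{0\}$, since $R$ is $R$--semiprime and $D$ is essential. The containment displayed just before the statement, applied with $X=\{q\}$ and $I=D$, gives
\[
\{(q)_R,D,R\}+\{(q)_R,R,D\}+\{D,(q)_R,R\}\subseteq(\{q,D,R\}+\{q,R,D\}+\{D,q,R\})_R .
\]
Consequently, if the left side of the $\mathfrak{M}$--quotient condition vanished for $q$, it would vanish for the whole ideal $(q)_R\cap D\lhd R$. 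That ideal is nonzero by essentiality of $R$ in $P(R)$, and it would have zero triple products with $R$ on the relevant sides, contradicting $R$--semiprimeness (equivalently, non-degeneracy) of $R$. Hence $P(R)$ embeds in $Q_m(R)$ identically on $R$, and we may assume
\[
R\subseteq P(R)\subseteq Q_m(R)\subseteq I(R).
\]

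\emph{Step 2: the inclusions $O(P(R))\subseteq Q_m(P(R))=O(Q_m(P(R)))$.} We pass to $L=\TKK(V(P(R)))$ and use $Q_m(P(R))=Q_{m-gr}(L)_1$. Exactly as in Proposition 3.3, Remark 2.2 shows that $\CM(R)\Id\subseteq\CM_{gr}(L)$ acts faithfully and that dense subsets of $B(R)$ remain dense. The proof of Proposition 3.1 then applies to the $O'_{gr}$--completion over $B(R)$: for dense orthogonal $\{\xi_a\}$ and $\{x_a\}\subseteq Q_{m-gr}(L)_{\pm1}$, glue the partial derivations $\ad_{x_a}$ on $J=\sum\xi_aJ_a$. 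This gives $O'_{gr}(Q_{m-gr}(L))=Q_{m-gr}(L)$ and $\TKK(V(O(P(R))))\subseteq O'_{gr}(L)$, with the same $\pm1$ components. Reading off the degree-$1$ part yields $O(P(R))\subseteq Q_m(P(R))=O(Q_m(P(R)))$.

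\emph{Step 3: quotients by an ultrafilter.} Let $B$ be an ultrafilter and $P=B(R)\setminus B$.
\begin{itemize}
\item[(a)] Show $\Ker\psi_{B,\TKK}=P\,\TKK(V(O(P(R))))$ as in Proposition 3.3. For a degree-$0$ element $z=\sum[a_i^+,b_i^-]$ that maps $O(P(R))^{\pm}$ into $P\,O(P(R))$, Proposition 2.4, p. 6, supplies $\alpha\in B$ with $\alpha z=0$.
\item[(b)] By Lemma 3.4, $O(P(R))_B$ is strongly prime. Hence $O'_{gr}(L)_B$ is strongly prime (by \cite{Gol6}, Remark 4.32) and has $K=\{0\}$.
\item[(c)] The last part of Proposition 3.1, adapted to $O'_{gr}$, shows that $Q_{m-gr}(L)_B$ is a $3$--graded algebra of quotients of $O'_{gr}(L)_B$.
\item[(d)] Apply the $3$--graded argument from the end of the proof of Proposition 3.3, with $K=\{0\}$ and $I_V$. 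It shows that $(Q_{m-gr}(L)_B)_{\pm1}$ is a pair of $\mathfrak{M}$--quotients of $V(O(P(R)))_B$.
\item[(e)] Use the exchange involution $\phi''_{ex}$ and \cite{JMQ}, Sec. 4, to descend from pairs to triple systems. This gives that $Q_m(P(R))_B$ is a system of $\mathfrak{M}$--quotients of $O(P(R))_B$.
\end{itemize}

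The main obstacle is Step 1, namely checking that the ideal $(q)_R\cap D$ cannot annihilate $R$ in the required mixed positions. I would first try to reduce this directly to non-degeneracy via $U_x$ for $x$ in that ideal, using the displayed identities. If that fails, I would use Lemma 3.4's non-degeneracy of $P(R)$ together with \cite{JMQ}, Lemma 0.4, to translate essentiality of $D$ into zero annihilator.
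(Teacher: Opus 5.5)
Your proposal is correct and is essentially the paper's proof. Step 1 is exactly the paper's argument: apply the containment preceding the statement with $X=\{q\}$ and $I=D=\bigcap_i {}_{\nu_i}R$ to the nonzero ideal $J=(q)_R\cap D$, where $\{J,J,R\}+\{J,R,J\}\ne\{0\}$ by $R$--semiprimeness, so the ``main obstacle'' you flag is already settled by your own first paragraph. The paper dismisses the remaining claims as ``similar to the proof of Proposition 3.3'', which is precisely what your Steps 2 and 3 spell out, with Lemma 3.4 supplying the strong primeness of $O(P(R))_B$.
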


\begin{proof}
If ${0\ne x=\sum\limits_{i=1}^k \alpha_i x_i\in P(R)}$, ${k\geq 1}$, 
${x_i\in R}$, ${\alpha_i\in \CM(R)}$, then taking into account the 
observation before Proposition 3.5 for 
${I=\bigcap\limits_{i=1}^k {}_{\alpha_i} R\in \mathcal{E}(R)}$ 
\begin{multline*}
\{0\}\ne 
\{(x)_R\cap I, (x)_R\cap I, R\}+\{(x)_R\cap I, R, (x)_R\cap I\}\subseteq 
\\
\{(x)_R, I, R\}+\{(x)_R, R, I\}+\{I, (x)_R, R\}\subseteq  
(\{x, I, R\}+\{x, R, I\}+\{I, x, R\})_R\subseteq R
\end{multline*}
(Proposition 2.1). Hence ${R\subseteq \End(Q_m(R))_{M(R)'} R\subseteq P(R)\subseteq 
Q_m(R)\subseteq I(R)}$. 

The conclusion of the remaining statements is similar to the proof of 
Proposition 3.3. 
\end{proof}

To each linear Jordan pair ${(R^{\pm}, \{\ ,\ ,\ \}^{\pm})}$ there corresponds 
a linear triple Jordan system ${(\Sigma=R^{-}\oplus R^{+}, \{\ ,\ ,\ \})}$,
\[
\{x^{-}+x^{+}, y^{-}+y^{+}, z^{-}+z^{+}\}\ =\ 
\{x^{-}, y^{+}, z^{-}\}^{-}+\{x^{+}, y^{-}, z^{+}\}^{+}\quad 
(x^{\pm}, y^{\pm}, z^{\pm}\in R^{\pm})\,.
\]
The pair ${(R^{\pm})}$ is non-degenerate (semiprime, $R$--semiprime, prime, 
$R$--prime, strongly prime) if the system $\Sigma$ is such. For the 
$R$--semiprime $(R^{\pm})$  
\[
P_{gr}(\TKK((R^{\pm})))\cong \TKK((\CM(\Sigma) R^{\pm}))\,,\quad 
\CM_{gr}(\TKK((R^{\pm})))\cong \CM(\Sigma)\,,
\]
where ${(\CM(\Sigma) R^{\pm})}$ is a subpair of $V(P(\Sigma))$ 
\cite{Gol6}, Proposition 4.37, and  
\[
O(\CM(\Sigma) R^{-})\cap O(\CM(\Sigma) R^{+})\ =\ \{0\}
\]
(${S\subseteq O(S)\subseteq I(P(\Sigma))=O(I(P(\Sigma)))}$), since if 
${0\ne x\in O(\CM(\Sigma) R^{-})\cap O(\CM(\Sigma) R^{+})}$, then for 
some dense orthogonal ${\{\xi^{\pm}_a\}_{a\in I^{\pm}}\subseteq B(\Sigma)}$, 
${\{x^{\pm}_a\}_{a\in I^{\pm}}\subseteq \CM(\Sigma) R^{\pm}}$, 
${\xi^{\pm}_a x=\xi^{\pm}_a x^{\pm}_a}$, ${a\in I^{\pm}}$, 
\begin{multline*}
\xi^{-}_a \xi^{+}_b \{x, P(\Sigma), P(\Sigma)\}\ =\ 
\CM(\Sigma) \{\xi^{-}_a \xi^{+}_b x, \Sigma, \Sigma\}\ =
\\ 
\CM(\Sigma) \{\xi^{-}_a \xi^{+}_b x, R^{+}, R^{-}\}+
\CM(\Sigma) \{\xi^{-}_a \xi^{+}_b x, R^{-}, R^{+}\}\ =
\\ 
\CM(\Sigma) \{\xi^{-}_a \xi^{+}_b x^{+}_b, R^{+}, R^{-}\}+
\CM(\Sigma) \{\xi^{-}_a \xi^{+}_b x^{-}_a, R^{-}, R^{+}\}\ =\
\{0\}
\end{multline*} 
and analogically ${\{P(\Sigma), \xi^{-}_a \xi^{+}_b x, P(\Sigma)\}=\{0\}}$, 
${\xi^{-}_a \xi^{+}_b x\in \Ann_{P(\Sigma)} P(\Sigma)=\{0\}}$ for all 
${a\in I^{-}}$, ${b\in I^{+}}$ and ${x=0}$ 
(${\{\xi^{-}_a \xi^{+}_b\}_{a\in I^{-},\ b\in I^{+}}^{\perp}=\{0\}}$). 
Therefore, in this case ${O(\Sigma)=O(R^{-})\oplus O(R^{+})}$, 
${P(\Sigma)=\CM(\Sigma) R^{-}\oplus \CM(\Sigma) R^{+}}$ and  
${O(P(\Sigma))=O(\CM(\Sigma) R^{-})\oplus O(\CM(\Sigma) R^{+})}$ are 
Jordan systems of Jordan pairs 
$(O(R^{\pm}))$, $(\CM(\Sigma) R^{\pm})$ and $(O(\CM(\Sigma) R^{\pm}))$ for  
\[
\{x^{\pm}, y^{\mp}, z^{\pm}\}^{\pm}\ =\ \{x^{\pm}, y^{\mp}, z^{\pm}\}\quad 
(x^{\pm}, y^{\pm}, z^{\pm}\in O(\CM(\Sigma) R^{\pm}))\,.
\]

\begin{lemma}
If ${(R^{\pm}, \{\ ,\ ,\ \}^{\pm})}$ is a non-degenerate linear Jordan pair 
over a ring $F$ with $1/6$, $B$ is an ultrafilter in $B(\Sigma)$, then 
the linear Jordan pairs $(O(R^{\pm}))_B$, $(O(\CM(\Sigma) R^{\pm}))_B$ are 
strongly prime. 
\end{lemma}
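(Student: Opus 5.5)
The plan is to reduce the statement to Lemma 3.4 applied to the linear triple Jordan system $\Sigma=R^{-}\oplus R^{+}$. Non-degeneracy of the pair $(R^{\pm})$ means, by definition, non-degeneracy of $\Sigma$. Since $F$ contains $1/6$, Lemma 3.4 then makes $O(\Sigma)_B$ and $O(P(\Sigma))_B$ strongly prime linear triple Jordan systems for every ultrafilter $B$ in $B(\Sigma)$. By the observation made just before the statement,
\[
O(\Sigma)\ =\ O(R^{-})\oplus O(R^{+})\,,\quad O(P(\Sigma))\ =\ O(\CM(\Sigma) R^{-})\oplus O(\CM(\Sigma) R^{+})\,,
\]
and these are the triple systems attached to the pairs $(O(R^{\pm}))$ and $(O(\CM(\Sigma) R^{\pm}))$. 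It therefore suffices to show two things: passing to $B$ commutes with this decomposition, and strong primeness of the quotient system $S_B$ gives strong primeness of the pair.

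For the first point I would check that $P S=P O(R^{-})\oplus P O(R^{+})$ for $S=O(\Sigma), O(P(\Sigma))$ and $P=B(\Sigma)\setminus B$. Elements of $B(\Sigma)\subseteq \CM(\Sigma)$ preserve each summand, because they commute with $M(\Sigma)'$ and the projections onto $R^{\pm}$ lie in $M(\Sigma)'$. For instance, the projection onto $R^{+}$ can be written as $\{\ ,u,v\}$-type sums; at worst I would use that $\CM(\Sigma)$ acts $\CM(\Sigma)$-linearly on $P(\Sigma)=\CM(\Sigma)R^{-}\oplus \CM(\Sigma)R^{+}$ by construction, and that this extends to the orthogonal completions through $\xi_a$-componentwise action. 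Hence
\[
S_B\ \cong\ O(\cdot R^{-})_B\oplus O(\cdot R^{+})_B\,,
\]
and $S_B$ is exactly the triple system associated with the pair $(O(\cdot R^{\pm}))_B$ with the induced compositions $\{\ ,\ ,\ \}^{\pm}$.

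For the second point I would argue directly from the definition given before the statement: a pair is strongly prime when its triple system $\Sigma$ is. So once $S_B$ is identified with the triple system of $(O(\cdot R^{\pm}))_B$, the conclusion is immediate. The step I expect to be the real obstacle is this identification, namely that the congruence $\sim_B$ on $S$ is graded with respect to the $\pm$ decomposition, with kernel equal to $P O(R^{-})\oplus P O(R^{+})$ rather than something mixing components. I would handle it as follows. If $\xi(x^{-}+x^{+})=0$ with $\xi\in B$, then $\xi x^{-}=-\xi x^{+}$ lies in $O(\cdot R^{-})\cap O(\cdot R^{+})=\{0\}$ by the displayed intersection argument, so $\xi x^{\pm}=0$ separately.

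A fallback, if identifying the kernel proves awkward, is to re-run the proof of Lemma 3.4 directly on the pair. This would use the sets $I_{x,y}$ built from $U$-operators of the pair, Proposition 2.4, p. 6, to produce $\alpha\in B$ with $I_{\alpha x,y}=\{0\}$, and then \cite{ACLM}, Theorem 2.9, together with the separating idempotent $\beta\in B(\Sigma)$ argument from the end of the proof of Lemma 2.8.
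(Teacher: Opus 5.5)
Your proposal is correct and is essentially the paper's proof: apply Lemma 3.4 to $\Sigma$, then check that for $S=O(\Sigma),O(P(\Sigma))$ the ideal $PS$ defining $S_B$ splits along the $\pm$ decomposition. The paper gets this splitting from closure of $P=B(\Sigma)\setminus B$ under joins ($\alpha,\beta\leq\alpha\oplus\beta(\alpha\oplus\Id_{P(\Sigma)})$), while you get it dually from closure of $B$ under products together with directness of the sum.

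One correction: the claim that the projections onto $R^{\pm}$ lie in $M(\Sigma)'$ fails in general, so drop it and rely on your fallback. $B(\Sigma)$ maps $O(R^{\pm})$ and $O(\CM(\Sigma)R^{\pm})$ into themselves because $B(\Sigma)S\subseteq S$ for every $S=O(S)$.
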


\begin{proof}
The pairs ${(O(R^{\pm})_B)=(O(R^{\pm}))_B, 
(O(\CM(\Sigma) R^{\pm})_B)=(O(\CM(\Sigma) R^{\pm}))_B}$ are strongly prime 
because their 
Jordan systems $O(\Sigma)_B$, $O(P(\Sigma))_B$ are strongly prime 
(Lemma 3.4, \cite{ACLM}, Lemma 2.2; due to  
${\alpha, \beta\leq \gamma=\alpha\oplus \beta (\alpha\oplus \Id_{P(\Sigma)})}$, 
${\alpha, \beta, \gamma\in P=B(\Sigma)\setminus B}$, 
$P (O(R^{\pm}))=(P O(R^{\pm}))$, 
${P (O(\CM(\Sigma) R^{\pm}))=(P O(\CM(\Sigma) R^{\pm}))}$).
\end{proof}

A linear Jordan pair ${(Q^{\pm}, \{\ ,\ ,\ \}^{\pm})}$ is a 
\emph{Jordan pair of $\mathfrak{M}$--quotients of a linear Jordan pair 
${(R^{\pm}, \{\ ,\ ,\ \}^{\pm})}$} if ${(R^{\pm})}$ is a subpair of 
${(Q^{\pm})}$ and for any ${0\ne q^{\pm}\in Q^{\pm}}$ there is 
${(I^{\pm})\lhd (R^{\pm})}$, ${\Ann_{(R^{\pm})} (I^{\pm})=\{0\}}$, 
\begin{enumerate}

\item ${\{q^{\pm}, I^{\mp}, R^{\pm}\}^{\pm}+\{q^{\pm}, R^{\mp}, I^{\mp}\}^{\pm}
\subseteq R^{\pm}}$, ${\{I^{\mp}, q^{\pm}, R^{\mp}\}^{\mp}\subseteq R^{\mp}}$;

\item ${\{q^{\pm}, I^{\mp}, R^{\pm}\}^{\pm}+
\{q^{\pm}, R^{\mp}, I^{\pm}\}^{\pm}\ne \{0\}}$ or (and) 
${\{I^{\mp}, q^{\pm}, R^{\mp}\}^{\mp}\ne \{0\}}$, 

\end{enumerate}
where for any ${X^{\pm}\subseteq R^{\pm}}$, 
\begin{multline*}
\Ann_{(R^{\pm})} (X^{\pm})\ =
\\ 
\{(x^{\pm})\in (R^{\pm})\mid \{x^{\pm}, X^{\mp}, R^{\pm}\}^{\pm}=
\{x^{\pm}, R^{\mp}, X^{\pm}\}^{\pm}=
\{R^{\pm}, x^{\mp}, X^{\pm}\}^{\pm}=\{0\}\}\,.
\end{multline*}
This is equivalent to the fact that ${Q^{-}\oplus Q^{+}}$ is a system of 
$\mathfrak{M}$--quotients of ${\Sigma=R^{-}\oplus R^{+}}$, since for 
${0\ne q^{\pm}\in Q^{\pm}}$ and ${I\lhd \Sigma}$, ${\Ann_{\Sigma} I=\{0\}}$, 
\begin{multline*}
\{0\}\ne \{q^{\pm}, \Sigma, I\}+\{q^{\pm}, I, \Sigma\}+\{I, q^{\pm}, \Sigma\}\ =
\\
(\{q^{\pm}, R^{\pm}, I^{\pm}\}^{\pm}+\{q^{\pm}, I^{\pm}, R^{\mp}\}^{\pm})\oplus 
\{I^{\mp}, q^{\pm}, R^{\mp}\}^{\pm}\subseteq \Sigma\,,
\end{multline*}
where $I^{\pm}$ is the projection of $I$ onto $R^{\pm}$, 
${(I^{\pm})\lhd (R^{\pm})}$, ${\Ann_{(R^{\pm})} (I^{\pm})=\{0\}}$ 
(in the other direction follows from the definition of the pair 
of $\mathfrak{M}$--quotients; ${I^{-}\oplus I^{+}\lhd \Sigma}$ for 
${(I^{\pm})\lhd (R^{\pm})}$ and ${\Ann_{\Sigma} I^{-}\oplus I^{+}=\{0\}}$, 
if ${\Ann_{(R^{\pm})} (I^{\pm})=\{0\}}$). 

The maximal pair of $\mathfrak{M}$--quotients $Q_m((R^{\pm}))$ of a 
non-degenerate $(R^{\pm})$ over $F$ with $1/6$, into which all pairs 
of $\mathfrak{M}$--quotients of $(R^{\pm})$ are embedded identically on 
$(R^{\pm})$, coincides up to an isomorphism identical on $(R^{\pm})$, with   
${Q_m((R^{\pm}))=(Q_m(\TKK((R^{\pm})))_{\pm 1})}$, where 
${Q_m(\TKK((R^{\pm})))=Q_{m-gr}(\TKK((R^{\pm})))}$ (see above, \cite{OMol}, 
Theorem 4.7, \cite{JMQ}, Theorem 3.2). 

\begin{prop} 
If ${(R^{\pm}, \{\ ,\ ,\ \}^{\pm})}$ is a non-degenerate linear Jordan pair 
over a ring $F$ with $1/6$, then $(\CM(\Sigma) R^{\pm})$ is a linear Jordan 
pair of $\mathfrak{M}$--quotients of $(R^{\pm})$ and   
$(\CM(\Sigma) R^{\pm})\hookrightarrow Q_m((R^{\pm}))$ identically on 
${(R^{\pm})}$, 
\[
(O(\CM(\Sigma) R^{\pm}))\subseteq 
Q_m((\CM(\Sigma) R^{\pm}))\ =\ 
(O(Q_m(\TKK((\CM(\Sigma) R^{\pm})))_{\pm 1}))\,,
\]
$Q_m((\CM(\Sigma) R^{\pm}))_B$ is a linear Jordan pair of 
$\mathfrak{M}$--quotients of $(O(\CM(\Sigma) R^{\pm}))_B$ 
for any ultra\-filter $B$ in $B(\Sigma)$.
\end{prop}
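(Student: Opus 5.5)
The plan is to reduce everything to the triple system $\Sigma=R^{-}\oplus R^{+}$ and apply Proposition 3.5 and the proof of Proposition 3.3. The link is the equivalence noted just before the statement: a pair $(Q^{\pm})$ is a pair of $\mathfrak{M}$--quotients of $(R^{\pm})$ exactly when $Q^{-}\oplus Q^{+}$ is a system of $\mathfrak{M}$--quotients of $\Sigma$. Since $(R^{\pm})$ is non-degenerate over $F$ with $1/6$, so is $\Sigma$. Proposition 3.5 then shows that $P(\Sigma)=\CM(\Sigma) R^{-}\oplus \CM(\Sigma) R^{+}$ is a Jordan system of $\mathfrak{M}$--quotients of $\Sigma$. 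Projecting onto the graded components gives the first claim, that $(\CM(\Sigma) R^{\pm})$ is a pair of $\mathfrak{M}$--quotients of $(R^{\pm})$.

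The embedding $(\CM(\Sigma)R^{\pm})\hookrightarrow Q_m((R^{\pm}))$ then follows from the maximality of $Q_m((R^{\pm}))=(Q_m(\TKK((R^{\pm})))_{\pm1})$. Alternatively, one can use $P_{gr}(\TKK((R^{\pm})))\cong \TKK((\CM(\Sigma)R^{\pm}))$ (\cite{Gol6}, Proposition 4.37) together with Proposition 3.1, which embeds $P_{gr}$ homogeneously into $Q_{m-gr}(\TKK((R^{\pm})))$. Taking the $\pm1$ components gives the embedding identically on $(R^{\pm})$.

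For the inclusion $(O(\CM(\Sigma)R^{\pm}))\subseteq Q_m((\CM(\Sigma)R^{\pm}))=(O(Q_m(\TKK((\CM(\Sigma)R^{\pm})))_{\pm1}))$, I will follow the proof of Proposition 3.3. Put $L=\TKK((\CM(\Sigma)R^{\pm}))$, which is homogeneously semiprime with finite $3$--grading. The identification $\CM(\Sigma)\Id\subseteq \CM_{gr}(L)$, combined with Remark 2.2, lets dense orthogonal families in $B(\Sigma)$ stay dense in $B_{gr}(L)$. This allows the $O'_{gr}$--version of the orthogonal completion to be used. Proposition 3.1 then gives $O'_{gr}(L)\subseteq Q_{m-gr}(L)=O'_{gr}(Q_{m-gr}(L))$. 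The $\pm1$ components of $O'_{gr}(L)$ are $O(\CM(\Sigma)R^{\pm})$, because $O(P(\Sigma))=O(\CM(\Sigma)R^{-})\oplus O(\CM(\Sigma)R^{+})$ as shown before Lemma 3.6. Restricting to degrees $\pm1$ yields the displayed inclusion.

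For the last claim, fix an ultrafilter $B$ and put $P=B(\Sigma)\setminus B$. As in Proposition 3.3, I will check that the kernel of the induced epimorphism $\TKK((O(\CM(\Sigma)R^{\pm})))\to\TKK((O(\CM(\Sigma)R^{\pm}))_B)$ is $P\,\TKK$. The degree-zero part is handled with Proposition 2.4, p.~6: an element $z=\sum[a_i^+,b_i^-]$ with $[\,\cdot\,,z]$ landing in $P O$ is killed by some $\alpha\in B$. Next, Proposition 3.1 makes $Q_{m-gr}(L)_B$ a $3$--graded algebra of quotients of $O'_{gr}(L)_B$. Since the latter is strongly prime (Lemma 3.6 together with \cite{Gol6}, Remark 4.32), it is an algebra of quotients in the ungraded sense as well. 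Finally, the argument in the proof of Proposition 3.3 applies here because $K=\{0\}$ by strong primeness: it shows that the $\pm1$ components of a $3$--graded algebra of quotients form a pair of $\mathfrak{M}$--quotients of the $\pm1$ components of the base. Applying this gives the claim for $(O(\CM(\Sigma)R^{\pm}))_B$. I expect the main obstacle to be the kernel computation, where orthogonal completeness of the relevant sets must be verified to invoke Proposition 2.4, p.~6, and the compatibility of $B(\Sigma)$ with $B_{gr}(L)$.
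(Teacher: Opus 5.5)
Your proposal is correct and follows essentially the paper's route: Proposition 3.5 plus the pair/system equivalence gives the first claim; Proposition 3.1 applied to the $\TKK$ algebra, followed by restriction to degrees $\pm 1$, gives the inclusions; and the $3$--graded-to-pair argument from the proof of Proposition 3.3 gives the ultrafilter claim. The one difference is that the paper uses the equality $\CM_{gr}(\TKK((\CM(\Sigma) R^{\pm})))=\CM(\Sigma)=\CM_{gr}(\TKK((R^{\pm})))$ from \cite{Gol6}, Proposition 4.37, so $B(\Sigma)=B_{gr}(\TKK((R^{\pm})))$ and Proposition 3.1 applies verbatim with the genuine $O_{gr}$; this makes your $O'_{gr}$ adaptation, and the separate kernel computation you borrowed from Proposition 3.3, unnecessary.
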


\begin{proof}
The first statement follows immediately from the fact that the system 
$P(\Sigma)$ of the pair $(\CM(\Sigma) R^{\pm})$ is a system of 
$\mathfrak{M}$--quotients of the system $\Sigma$ (Proposition 3.5; 
\cite{JMQ}, Example 2.12). Since 
\begin{multline*}
\CM_{gr}(\TKK((\CM(\Sigma) R^{\pm})))\ =\ \CM(P(\Sigma))\ =\ 
\CM(\Sigma)\ =\ \CM_{gr}(\TKK((R^{\pm})))\,,
\\
\shoveleft{
P_{gr}(\TKK((\CM(\Sigma) R^{\pm})))\ =\ 
\TKK((\CM(P(\Sigma)) \CM(\Sigma) R^{\pm}))\ =}
\\ 
\TKK((\CM(\Sigma) R^{\pm}))\ =\ P_{gr}(\TKK((R^{\pm})))
\end{multline*}
\cite{Gol6}, Proposition 4.37, 
\begin{multline*}
\TKK((O(\CM(\Sigma) R^{\pm})))\subseteq
O_{gr}(P_{gr}(\TKK((R^{\pm}))))\ =\ O_{gr}(\TKK((\CM(\Sigma) R^{\pm})))
\subseteq 
\\
Q_{m-gr}(\TKK((\CM(\Sigma) R^{\pm})))\ =\ 
O_{gr}(Q_{m-gr}(\TKK(\CM(\Sigma) R^{\pm})))
\end{multline*}
(Proposition 3.1) and 
${Q_{m-gr}(\TKK(\CM(\Sigma) R^{\pm}))=Q_m(\TKK(\CM(\Sigma) R^{\pm}))}$,
\begin{multline*}
(O(\CM(\Sigma) R^{\pm}))\ =\ (O_{gr}(\TKK((\CM(\Sigma) R^{\pm})))_{\pm 1})
\subseteq
\\ 
Q_m((\CM(\Sigma) R^{\pm}))\ =\ 
(O(Q_m(\TKK((\CM(\Sigma) R^{\pm})))_{\pm 1}))\,.
\end{multline*}

It remains to note that  
${Q_{m-gr}(\TKK((\CM(\Sigma) R^{\pm})))_B=
Q_m(\TKK((\CM(\Sigma) R^{\pm})))_B}$ is an algebra of 
quotients of ${O_{gr}(\TKK((\CM(\Sigma) R^{\pm})))_B}$, 
\[
({O_{gr}(\TKK((\CM(\Sigma) R^{\pm})))_B}_{\pm 1})\ =\ 
(O(\CM(\Sigma) R^{\pm})_B)\ =\ (O(\CM(\Sigma) R^{\pm}))_B\,,
\]
and finally 
${Q_m((\CM(\Sigma) R^{\pm}))_B=(Q_m(\TKK((\CM(\Sigma) R^{\pm})))_{\pm 1})_B}$ 
is a pair of $\mathfrak{M}$--quotients of $(O(\CM(\Sigma) R^{\pm}))_B$ 
(Proposition 3.1, the proof of Proposition 3.3).
\end{proof}

Recall that the triple Lie system ${(R, [\ ,\ ,\ ])}$ is a linear triple system 
over $F$ with identities 
\begin{gather*}
[x, x, y]\ =\ 0\,,\quad [x, y, z]+[y, z, x]+[z, x, y]\ =\ 0\,,
\\
[x, y, [a, b, c]]\ =\ [[x, y, a], b, c]+[a, [x, y, b], c]+
[a, b, [x, y, c]]\quad (x, y, z, a, b, c\in R)\,,
\end{gather*}
its standard Lie enveloping ${\mathcal{L}(R)=R\oplus [R, R]}$ is a  
$\mathbb{Z}_2$--graded Lie algebra with ${\mathcal{L}(R)_1=R}$, 
${\mathcal{L}(R)_0=[R, R]=\biggl\{\sum\limits_{i=1}^k [x_i, y_i,\ ]\biggl| 
x_i, y_i\in R,\ k\geq 1\biggr\}\subseteq \Der(R)}$ and the multiplication 
\[
[x_1+D_1, x_2+D_2]\ =\ (x_1 D_2-x_2 D_1)+([x_1, x_2,\ ]+[D_1, D_2])
\quad (x_i\in R,\ D_i\in [R, R])\,,
\]
${[D_1, D_2]=D_1 D_2-D_2 D_1}$, the $F$--submodule ${I\subseteq R}$ is an ideal 
of $R$ if ${[I, R, R]\subseteq I}$, 
\[
\Ann_R I\ =\ \{x\mid [x, I, R]=[x, R, I]=\{0\}\}\ =\ 
R\cap \Ann_{\mathcal{L}(R)} (I)_{\mathcal{L}(R)}\,,\quad 
(I)_{\mathcal{L}(R)}\ =\ I\oplus [I, R]
\]
\cite{QTS}, Sec. 4, for ${J\lhd_{gr} \mathcal{L}(R)}$ the conditions 
${J\ne \{0\}}$ and ${J_1=J\cap R\ne \{0\}}$ are equivalent (${J_1\lhd R}$, 
${(J_1)_{\mathcal{L}(R)}\subseteq J}$). So, 
${J\in \mathcal{E}(R)}$ if and only if 
${(J)_{\mathcal{L}(R)}\in \mathcal{E}_{gr}(\mathcal{L}(R))}$, and  
${J\in \mathcal{E}_{gr}(\mathcal{L}(R))}$ if and only if 
${J_1\in \mathcal{E}(R)}$                                         
(${(J_1)_{\mathcal{L}(R)}\in \mathcal{E}_{gr}(\mathcal{L}(R))}$). Since 
\begin{multline*}
[I, J, R]\ =\ [J, I, R]\subseteq [J, R, I]+[R, I, J]\ =\ 
[J, R, I]+[I, R, J]\subseteq
\\ 
[(I)_{\mathcal{L}(R)}, (J)_{\mathcal{L}(R)}]\ =\ 
([I, R, J]+[J, R, I])\oplus ([I, J]+[[I, R], [J, R]])\quad (I, J\lhd R)\,,
\end{multline*}
it follows that the $R$--primeness ($R$--semiprimeness) of $R$ is equivalent 
to the homogeneous primeness (semiprimeness) of $\mathcal{L}(R)$. For any 
${I\lhd R}$, 
\begin{multline*}
[[I, R, I], R, R]\subseteq 
[I, R, [I, R, R]]+[I, [I, R, R], R]+[I, R, [I, R, R]]\subseteq
\\ 
[I, R, I]+[I, I, R]\ =\ [I, R, I]\,,
\end{multline*}
${[I, R, I]\lhd R}$, ${[I, R, I]\ne \{0\}}$ (${[I, R, I]\in \mathcal{E}(R)}$)
for the $R$--semiprime $R$ and ${I\ne \{0\}}$ (${I\in \mathcal{E}(R)}$).  

The system $R$ is \emph{non-degenerate} if ${[x,\ , x]\ne 0}$ for all 
${0\ne x\in R}$. For $R$ without $6$--torsion, non-degeneracy is equivalent to 
non-degeneracy (homogeneous non-degeneracy) of $\mathcal{L}(R)$ 
\cite{NLS}, Theorems 2.3, 2.4. If $R$ without $6$--torsion is 
non-degenerate, ${I\lhd R}$, then the non-degeneracy of $\mathcal{L}(R)$ is 
inherited by its ideal ${I\oplus [I, R]}$ and subsystem $I$ 
\cite{Zel}, Lemma 4, \cite{NLS}, the beginning of the proof of Theorem 2.4. 
Consequently, for such $R$ the following are equivalent: primeness; 
$R$--primeness; ${I\cap J\ne \{0\}}$ for all ${\{0\}\ne I, J\lhd R}$. 

If $R$ is $R$--semiprime, then ${[P(R), P(R)]\subseteq \Der_{\CM(R)}(P(R))}$, 
$\mathcal{L}(R)$ can be identified with the homogeneous subalgebra of 
$\mathcal{L}(P(R))$ generated by $R$ (${(R)D=\{0\}}$, ${D\in [P(R), P(R)]}$
is equivalent to ${D=0}$), ${\phi|_{P(R)}\in \CM(R)}$ and
\[
[\phi [x, y], z]\ =\ [\phi x, y, z]\ =\ [x, \phi y, z]\ =\ 
[x, y, \phi z]\ =\ \phi [x, y, z]\quad (x, y, z\in P(R))
\]
for all ${\phi\in \End(\mathcal{L}(P(R)))_{M(\mathcal{L}(R))'-gr}}$, 
\[
\CM(R) \Id_{\mathcal{L}(P(R))}\ =\ 
\End(\mathcal{L}(P(R)))_{M(\mathcal{L}(R))'-gr}\ =\ 
\End(\mathcal{L}(P(R)))_{M(\mathcal{L}(P(R)))'-gr}\,.
\]
If ${\psi: M\longrightarrow \mathcal{L}(P(R))}$ is a homogeneous homomorphism 
of a homogeneous $M(\mathcal{L}(R))'$--sub\-module 
${M\subseteq \mathcal{L}(P(R))}$, then ${\psi_1=\psi|_{M\cap P(R)}\in 
\Hom(M\cap P(R), P(R))_{M(R)'}}$ can be extended to 
${\overline{\psi}_1\in \CM(R)}$ and 
${\overline{\psi}_1 \Id_{\mathcal{L}(P(R))}\in 
\End(\mathcal{L}(P(R)))_{M(\mathcal{L}(R))'-gr}}$, 
\[
[(\psi-\overline{\psi}_1 \Id_{\mathcal{L}(P(R))})(M\cap [P(R), P(R)]), R]\ =\ 
(\psi-\overline{\psi}_1)([M\cap [P(R), P(R)], R])\ =\ \{0\}\,,
\]
${\psi=\overline{\psi}_1 \Id_{\mathcal{L}(P(R))}}$. Thus, 
${P_{gr}(\mathcal{L}(R))=\mathcal{L}(P(R))}$, 
${\CM_{gr}(\mathcal{L}(R))=\CM(R) \Id_{\mathcal{L}(P(R))}}$. 

\begin{lemma} 
If a triple Lie system without $6$--torsion ${(R, [\ ,\ ,\ ])}$ is 
non-degenerate, $B$ is an ultrafilter in $B(R)$, then the triple 
Lie systems $O(R)_B$, $O(P(R))_B$ are strongly prime.
\end{lemma}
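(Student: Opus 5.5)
The plan is to reduce to the Lie algebra case, Lemma 2.9 and its graded analogue, through the standard Lie enveloping algebra $\mathcal{L}(R)$. This follows the pattern of Lemmas 3.4 and 3.6, where strong primeness of Jordan systems was obtained from ideal-theoretic data plus the orthogonal completion machinery.

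Put $P=B(R)\setminus B$ and $S=O(R), O(P(R))$. We first record that $P(R)$, $O(R)$ and $O(P(R))$ inherit the non-degeneracy of $R$. For $O(S)$ this is the argument given after Lemma 2.5 with $f(x_1,x_2)=[x_1,x_2,x_1]$, where $x_1$ occurs in every monomial. For $P(R)$ we use the identifications $P_{gr}(\mathcal{L}(R))=\mathcal{L}(P(R))$ and $\CM_{gr}(\mathcal{L}(R))=\CM(R)\Id_{\mathcal{L}(P(R))}$ established just above, together with \cite{NLS}, Theorems 2.3, 2.4 (non-degeneracy of $R$ without $6$--torsion is equivalent to that of $\mathcal{L}(R)$). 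We then combine these with the inheritance of non-degeneracy by $P_{gr}$ and by graded algebras of quotients \cite{OMol}, Propositions 2.4, 2.7 and Proposition 3.1. Consequently $\mathcal{L}(S)$ is non-degenerate, and it is $\mathbb{Z}_2$--graded, so its grading is finite.

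Next we identify $\mathcal{L}(S)$ with the homogeneous subalgebra of $O_{gr}(\mathcal{L}(P(R)))$ generated by $S$. We check, as in the proof of Proposition 3.3 for the TKK construction, that the canonical epimorphism $\psi_B:S\to S_B$ induces a homogeneous epimorphism $\mathcal{L}(S)\to\mathcal{L}(S_B)$ with kernel $P\mathcal{L}(S)$, so that $\mathcal{L}(S)_B\cong\mathcal{L}(S_B)$. The key point concerns an element $D=\sum_i[x_i,y_i,\ ]\in[S,S]$ with $(S)D\subseteq PS$. Here $(S)D=O((S)D)$, so by Proposition 2.4, p. 6 there is $\alpha\in B$ with $\alpha D=0$, and hence $D\in P[S,S]$.

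With this in hand, the strong primeness of $S_B$ follows from that of $\mathcal{L}(S)_B$. Since $S_B$ has no $6$--torsion issues after localization at a Boolean ultrafilter (we would verify that $P S$ is closed under the relevant division via the $\CM(R)$--linearity), this transfer goes through \cite{NLS}, Theorems 2.3, 2.4 and the equivalence, recalled above, of $R$--primeness of a triple system with homogeneous primeness of its enveloping algebra. To prove that $\mathcal{L}(S)_B$ is (homogeneously) strongly prime, we repeat the proof of Lemma 2.9 in the $\mathbb{Z}_2$--graded setting with $O_{gr}$ in place of $O$. Suppose $x,y\in S\setminus PS$ satisfy $[[[S,y],S],x]\subseteq PS$ (computed inside $\mathcal{L}(S)$). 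Orthogonal completeness of these sets and Proposition 2.4, p. 6 let us pass to $\alpha x$ with $\alpha\in B$ and an exact zero product. The \cite{GK} argument then produces $u\in(x)\setminus PS$ and $v\in(y)\setminus PS$ with $(u)\cap(v)=\{0\}$. Finally, as at the end of Lemma 2.8, we choose $\delta\in B(R)$ with $\delta u=u$ and $\delta v=0$ and obtain a contradiction from whichever of $\delta$ or $\Id_{P(R)}-\delta$ lies in $B$.

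The main obstacle is the transfer between $\mathcal{L}(S)_B$ and $S_B$, namely the claim $\Ker=P\mathcal{L}(S)$ on the even part $[S,S]$. The cleaner route, which I would try first, is to avoid Lie envelopes at the last step. One would instead apply the strong-primeness criterion for non-degenerate triple Lie systems without $6$--torsion (the analogue of \cite{GK}, Theorem 1.3, via \cite{NLS}) directly to $S_B$, using sets of the form $[[[S,y,S],S,x]]$, each equal to its own orthogonal completion. The argument would then mirror Lemma 2.9 verbatim, with non-degeneracy of $S$ replacing that of $O(R)$.
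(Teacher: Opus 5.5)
Your overall architecture matches the paper's. Non-degeneracy passes to $P(R)$, $S$ and $\mathcal{L}(S)$. The equality $\Ker\psi_{B,\mathcal{L}}=P\mathcal{L}(S)$ follows from $(S)D=O((S)D)$ and Proposition 2.4, p.~6; this is exactly the paper's argument, and it is the routine part, not the main obstacle. Finally, $S_B$ is strongly prime once $\mathcal{L}(S_B)=\mathcal{L}(S)_B$ is homogeneously prime, via $(I)_{\mathcal{L}(S_B)}\cap (J)_{\mathcal{L}(S_B)}\cap S_B=I\cap J$. (The $6$--torsion-freeness of $S_B$ does hold, but by essentiality rather than $\CM(R)$--linearity: $6x\in PS$ gives $6\xi x=0$, hence $\xi x=0$, for some $\xi\in B$, because a nonzero $6$--torsion submodule of $I(P(R))$ would meet $R$ nontrivially.)

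The gap is the middle step, the homogeneous primeness of $\mathcal{L}(S)_B$. The algebra $\mathcal{L}(S)=S\oplus [S,S]$ need not be orthogonally complete. Its even part consists of sums $\sum_{i=1}^k [b_i,c_i]$ with unbounded $k$, and an orthogonal sum of such elements need not have this form. So ``repeating Lemma 2.9 with $O_{gr}$ in place of $O$'' tells you about $O_{gr}(\mathcal{L}(S))_B$, not about $\mathcal{L}(S)_B$. Moreover, the sets $J_z$, $I_z$, $I_{x',y'}$ on which the argument of \cite{GK} runs are not orthogonally complete inside $\mathcal{L}(S)$. Restricting to $x,y\in S$ is harmless, since nonzero graded ideals of $\mathcal{L}(S_B)$ meet $S_B$. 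Restricting the inner slots to $S$ is not harmless: $[[[S,y],S],\alpha x]=\{0\}$ is not the hypothesis $[[[L,y],L],x]=\{0\}$ needed by \cite{GK} or by \cite{LopJ}, Theorem 7.1.7. It does not control components such as $[[[S,y],[S,S]],x]$ or $[[[[S,S],y],S],x]$, and the argument also needs the vanishing to propagate to elements of $(x)$ and $(y)$.

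The paper bridges $\mathcal{L}(S)$ and $O_{gr}(\mathcal{L}(S))$ in two moves.
\begin{itemize}
\item It first proves the graded analogue of Lemma 2.9 for the non-degenerate, orthogonally complete algebra $O_{gr}(\mathcal{L}(S))$. Here \cite{LopJ}, Theorem 7.1.7 gives $(\alpha x)\cap (y)=\{0\}$ directly from $J_{\alpha x,y}=\{0\}$, where $J_{x,y}=[[[O_{gr}(\mathcal{L}(S)),x],O_{gr}(\mathcal{L}(S))],y]$.
\item It then uses the set $T_{x,y}=\{[[[t_1,x],t_2],y]\mid t_i=a_i+[b_i,c_i],\ a_i,b_i,c_i\in S\}$. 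Each $t_i$ contains a single bracket and $S=O(S)$, so $T_{x,y}=O_{gr}(T_{x,y})$. By bilinearity, $T_{x,y}$ spans $J'_{x,y}=[[[\mathcal{L}(S),x],\mathcal{L}(S)],y]$.
\end{itemize}
Hence $J'_{x,y}\subseteq P\mathcal{L}(S)$ yields $\beta\in B$ with $\beta T_{x,y}=\beta J'_{x,y}=\beta J_{x,y}=J_{\beta x,y}=\{0\}$. This gives $(\beta x)\cap (y)=\{0\}$ in $O_{gr}(\mathcal{L}(S))$, and the separation argument from the beginning of Lemma 2.5 puts $x$ or $y$ in $P\mathcal{L}(S)$.

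Without this device, or a proof that your $S$--restricted condition alone forces $(x)\cap(y)=\{0\}$ in a non-degenerate triple Lie system, the step fails. Your fallback, a \cite{GK}--type criterion applied directly to triple Lie systems, is not available in the paper and would itself need proof.
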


\begin{proof}
We begin with the following version of Lemma 2.9: if a finitely graded 
$6$--torsion-free Lie algebra ${(R, \{R_{\gamma}\}_{\gamma\in \Gamma})}$ is 
non-degenerate, then $O_{gr}(R)_U$ is homogeneously prime for any ultrafilter 
$U$ in $B_{gr}(R)$. Indeed, $P_{gr}(R)$, $O_{gr}(P_{gr}(R))$, 
$O_{gr}(R)$ are non-degenerate 
(Proposition 3.1, \cite{OMol}, Propositions 2.4, 2.7) and in the presence 
of ${x\in O_{gr}(R)_{\gamma}, y\in O_{gr}(R)_{\eta}, \gamma, \eta\in \Gamma}$, 
${I_{x, y}=[[[O_{gr}(R), y], O_{gr}(R)], x]=O_{gr}(I_{x, y})\subseteq 
Q O_{gr}(R), Q=B_{gr}(R)\setminus U}$, one can find 
${\alpha\in U}$, ${\alpha I_{x, y}=I_{\alpha x, y}=\{0\}}$ 
(see the graded version of Proposition 2.4, p. 6),
$(\alpha x)_{O_{gr}(R)}\cap (y)_{O_{gr}(R)}=\{0\}$ 
\cite{LopJ}, Theorem 7.1.7, ${x\in Q O_{gr}(R)}$ or  
${y\in Q O_{gr}(R)}$ (the reasoning from the beginning of the proof of 
Lemma 2.5 in the graded version). 
Thus, ${[[[O_{gr}(R)_U, x], O_{gr}(R)_U], y]\ne \{0\}}$ for any 
${0\ne x\in {O_{gr}(R)_{\gamma}}_U}$, 
${0\ne y\in {O_{gr}(R)_{\eta}}_U}$, ${\gamma, \eta\in \Gamma}$, 
$O_{gr}(R)_U$ is homogeneously prime and non-degenerate 
(the observations after Lemma 2.6).

The non-degeneracy of $R$ guarantees the non-degeneracy of $\mathcal{L}(R)$ 
\cite{NLS}, Theorem 2.4, the homogeneous non-degeneracy and non-degeneracy 
of ${\mathcal{L}(P(R))=P_{gr}(\mathcal{L}(R))}$ and the non-degeneracy $P(R)$ 
(Proposition 3.1, \cite{OMol}, Proposition 2.4, \cite{NLS}, 
Theorems 2.3, 2.4). The conclusion from this that ${S=O(R), O(P(R))}$ and $S_B$ 
are non-degenerate is similar to the observations after Lemma 2.6 and 
so $\mathcal{L}(S)$ is non-degenerate \cite{NLS}, Theorem 2.4, 
$O_{gr}(\mathcal{L}(S))$ is non-degenerate, 
$O_{gr}(\mathcal{L}(S))_B$ is non-degenerate and homogeneously prime (see 
above). If ${x\in \mathcal{L}(S)_i}$, ${y\in \mathcal{L}(S)_j}$, 
${i, j\in \mathbb{Z}_2}$, ${P=B(R)\setminus B}$, 
${J_{x, y}=[[[O_{gr}(\mathcal{L}(S)), x], O_{gr}(\mathcal{L}(S))], y]}$, 
\begin{multline*}
T_{x, y}\ =\ 
\{[[[t_1, x], t_2], y]\mid t_i=a_i+[b_i, c_i],\ a_i, b_i, c_i\in S\}\ =\ 
O_{gr}(T)\ \subseteq 
\\
J'_{x, y}\ =\ 
[[[\mathcal{L}(S), x], \mathcal{L}(S)], y]\ \subseteq\ 
P \mathcal{L}(S)\ \subseteq\ P O_{gr}(\mathcal{L}(S))\,,
\end{multline*}
then there is ${\beta\in B}$, 
${\{0\}=\beta T_{x, y}=\beta J'_{x, y}=\beta J_{x, y}=J_{\beta x, y}}$, 
${(\beta x)_{O_{gr}(\mathcal{L}(S))}\cap (y)_{O_{gr}(\mathcal{L}(S))}=\{0\}}$, 
${x\in P \mathcal{L}(S)}$ or ${y\in P \mathcal{L}(S)}$ (see above). 
Therefore $\mathcal{L}(S)_B$ is homogeneosly prime.
An epimorphism of triple Lie systems ${\psi_B: S\longrightarrow S_B}$ induces 
an epimorphism of $\mathbb{Z}_2$--graded Lie algebras 
${\psi_{B, \mathcal{L}}: \mathcal{L}(S)\longrightarrow \mathcal{L}(S_B)}$, 
\begin{gather*}
\psi_{B, \mathcal{L}}\,:\ x+\sum_{i=1}^k [y_i, z_i]\longmapsto 
\psi_B(x)+\sum_{i=1}^k [\psi_B(y_i), \psi_B(z_i)]\quad 
(x, y_i, z_i\in S,\ k\geq 1)\,,
\\
\Ker \psi_{B, \mathcal{L}}\ =\ \Ker \psi_B+
\{z\in [S, S]\mid [z, S]\subseteq \Ker \psi_B=P S\}\,.
\end{gather*}
Since for any ${z\in [S, S]}$, ${[z, S]=O([z, S])
\subseteq P S}$, there is ${\alpha\in B}$, 
$\alpha [z, S]=[\alpha z, S]=\{0\}$, ${\alpha z=0}$, ${z\in P [S, S]}$ 
(Proposition 2.4, p. 6), ${\Ker \psi_{B, \mathcal{L}}=P \mathcal{L}(S)}$, 
$\mathcal{L}(S_B)=\mathcal{L}(S)_B$. Due to the homogeneous primeness of 
$\mathcal{L}(S_B)$ and the observations before Lemma 3.8, 
\[
(I)_{\mathcal{L}(S_B)}\cap (J)_{\mathcal{L}(S_B)}\cap S_B\ =\ 
I\cap J\ \ne\ \{0\}\quad (\{0\}\ne I, J\lhd S_B)\,,
\]
$S_B$ is strongly prime.
\end{proof}

A triple Lie system ${(Q, [\ ,\ ,\ ])}$ is a \emph{Lie system of quotients of 
a triple Lie system} ${(R, [\ ,\ ,\ ])}$ if $R$ is a subsystem of $Q$, 
for any ${0\ne q\in Q}$ there exists ${I\lhd R}$, ${\Ann_R I=\{0\}}$, 
$\{0\}\ne [q, I, R]+[q, R, I]\subseteq R$. If $R$ is a subsystem of system 
$R'$, ${X, I\subseteq R'}$, ${(I)_R\subseteq I}$ (in the notation before 
Remark 3.5), then, in view of 
\begin{multline*}
[I, R, X]+[R, I, X]\subseteq [R, X, I]+[I, X, R]+[X, I, R]+[X, R, I]\subseteq 
[X, I, R]+[X, R, I]\,,
\\
\shoveleft{
[[X, R, R], I, R]\subseteq 
[X, R, [R, I, R]]+[R, [X, R, I], R]+[R, I, [X, R, R]]\subseteq} 
\\
[X, R, I]+[R, [X, R, I], R]+[[R, I, X], R, R]+[X, [R, I, R], R]+
[X, R, [R, I, R]]\subseteq
\\
\shoveright{ 
[X, I, R]+[X, R, I]+[[X, I, R], R, R]+[[X, R, I], R, R]\,,}
\\
\shoveleft{
[[X, R, R], R, I]\subseteq [R, I, [X, R, R]]+[I, [X, R, R], R]\subseteq}
\\
[[R, I, X], R, R]+[X, [R, I, R], R]+[X, R, [R, I, R]]+
[[X, R, R], I, R]\subseteq
\\
[X, I, R]+[X, R, I]+[[X, I, R], R, R]+[[X, R, I], R, R]\,,
\end{multline*}
and, as a consequence, 
\[
[(X)_R, I, R]+[(X)_R, R, I]\subseteq ([X, I, R]+[X, R, I])_R\,,
\] 
the condition in the definition of the system of quotients is equivalent to 
\[
\{0\}\ne [(q)_R, I, R]+[(q)_R, R, I]\subseteq 
([q, I, R]+[q, R, I])_R\subseteq R
\]
for ${(q)_R=q M^Q(R)'}$. Moreover, if ${q\in Q}$, ${J\lhd R}$, 
${\Ann_R J=\{0\}}$ and 
\[
[(q)_R, J, R]+[(q)_R, R, J]\subseteq ([q, J, R]+[q, R, J])_R\ =\ \{0\}\,,
\]
then ${(q)_R\cap R\subseteq \Ann_R J=\{0\}}$, ${q=0}$. If ${0\ne D\in [Q, Q]}$, 
then ${(R)D\ne \{0\}}$, since otherwise for any ${q\in Q}$, 
${q D\ne 0}$, and ${I\lhd R}$, ${\Ann_R I=\{0\}}$, 
${\{0\}\ne [q, I, R]+[q, R, I]\subseteq R}$, 
\[
([q, I, R]+[q, R, I])D\ =\ [q D, I, R]+[q D, R, I]\ =\ \{0\}\,,\quad 
q\ =\ 0\,?!
\]
Therefore $\mathcal{L}(R)$ can be identified with 
${\langle R\rangle\subseteq \mathcal{L}(Q)}$.
If ${J\lhd R, \Ann_R J=\{0\}, (J)D=\{0\}}$ and  
${[[J, R], D]=[J, (R)D]=\{0\}}$, then  
\begin{gather*}
[J, R, (R)D]\subseteq ([J, R, R])D+[(J)D, R, R]+[J, (R)D, R]\ =\ \{0\}\,,
\\
[(R)D, J, R]+[(R)D, R, J]\subseteq [J, (R)D, R]+[J, R, (R)D]\ =\ \{0\}\,,
\\
((R)D)_R\cap R\subseteq \Ann_R J\ =\ \{0\}\,,\quad (R)D\ =\ \{0\}\,?!
\end{gather*}
For a $R$--semiprime $R$, ${0\ne q\in Q}$, ${I\in \mathcal{E}(R)}$, 
${\{0\}\ne [q, I, R]+[q, R, I]\subseteq R}$, 
\begin{gather*}
[q, [I, R, I]]\subseteq [[q, [I, R]], I]+[[I, R], [q, I]]\subseteq
[I, R]+[[I, [q, I]], R]+[I, [R, [q, I]]]\subseteq [R, R]\,,
\\
[q, [[I, R, I], R]]\subseteq [[q, [I, R, I]], R]+[[I, R, I], [q, R]]
\subseteq [q, I, R]+[q, R, I]\subseteq R\,,
\\
\{0\}\ne [q, [I, R, I], R]+[q, R, [I, R, I]]\ =\ 
[q, [I, R, I], R]+[q, [[I, R, I], R]]\subseteq R\,.
\end{gather*}
If ${D=\sum\limits_{i=1}^k [q_{i1}, q_{i2}]\ne 0}$, ${q_{ij}\in Q}$, 
${I_{ij}\in \mathcal{E}(R)}$, 
${\{0\}\ne [q_{ij}, I_{ij}, R]+[q_{ij}, R, I_{ij}]\subseteq R}$, ${k\geq 1}$, 
then ${I=\bigcap\limits_{i=1, \ldots, k,\ j=1, 2} I_{ij}, 
K=[[I, R, I], R, [I, R, I]]\in \mathcal{E}(R), (K)D\ne \{0\}}$, and 
taking into account the previous comments for all ${q, q'\in \{q_{ij}\}}$ 
\begin{multline*}
[[q', [q, [I, R, I]]], R, I]+[I, R, [q', [q, [I, R, I]]]]
\subseteq 
[[q', [R, R]], R, I]+[I, R, [q', [R, R]]]\subseteq 
\\
\shoveright{
[(q')_R, R, I]+[I, R, (q')_R]\subseteq 
[(q')_R, I, R]+[(q')_R, R, I]\subseteq R\,,}
\\
\shoveleft{
[[I, R, I], [q', [q, R]], I]\subseteq 
[q', [[I, R, I], [q, R]], I]+[[q', [I, R, I]], [q, R], I]\subseteq}
\\
\shoveright{ 
[q', R, I]+[[R, R], [q, R], I]\subseteq R+[(q)_R, R, I]\subseteq R\,,}
\\
\shoveleft{
(K)D\subseteq 
[([I, R, I])D, R, [I, R, I]]+[[I, R, I], (R)D, [I, R, I]]+
[[I, R, I], R, ([I, R, I])D]\subseteq} 
\\
\shoveright{
\sum_{q\in \{q_{ij}\}} ([(q)_R, R, I]+[I, R, (q)_R])+
\sum_{q, q'\in \{q_{ij}\}} [[I, R, I], [q', [q, R]], I]\subseteq R\,,}
\\
\shoveleft{
[[I, R, I], [q, R, R]]\subseteq 
[[[q, R, R], I, R], I]+[[[q, R, R], I, I], R]+[[[q, R, R], R, I], I]
\subseteq}
\\
\shoveright{
[[(q)_R, I, R], I]+[[(q)_R, I, I], R]+[[(q)_R, R, I], I]
\subseteq [R, R]\,,}
\\
\shoveleft{
[q', [K, [q, R]]]\subseteq 
[q', [R, R, [I, R, I]]+[[I, R, I], [q, R, R], [I, R, I]]+
[[I, R, I], R, R]]\subseteq}
\\
[q', [[I, R, I], R, R]]\subseteq 
[[[I, R, I], q'], R, R]+[[I, R, I], [q', R], R]+[[I, R, I], R, [q', R]]
\subseteq
\\ 
\shoveright{
[[[I, R, I], q', R], R]+[[I, R, I], [q', R], R]+[[I, R, I], [q', R, R]]
\subseteq [R, R]\,,}
\\
\shoveleft{
[[q', K], [q, R]]\subseteq}
\\ 
[[[R, R, R], [I, R, I]]+
[[I, R, I], [q', R], [I, R, I]]+[[I, R, I], R, [R, R]], [q, R]]\subseteq
\\
[[[I, R, I], R]+[[[I, R, I], R, R], R], [q, R]]\subseteq
[[[I, R, I], R], [q, R]]\subseteq
\\
\shoveright{ 
[[[I, R, I], [q, R]], R]+[[I, R, I], [q, R, R]]\subseteq [R, R]\,,}
\\
[[K, R], D]\ =\ [R, R]+[K, (R)D]\subseteq [R, R]+
\sum_{q, q\in \{q_{ij}\}} ([q', [K, [q, R]]]-[[q', K], [q, R]])\subseteq 
[R, R]\,.
\end{multline*}
Thus, $\mathcal{L}(Q)$ is a $\mathbb{Z}_2$--graded algebra of quotients of 
$\mathcal{L}(R)$. On the other hand, if ${Q_1\oplus Q_0}$ is a 
$\mathbb{Z}_2$--graded algebra of quotients of $\mathcal{L}(R)$, then for 
any ${0\ne q\in Q_1}$ there exists ${I\in \mathcal{E}(R)}$, 
${I\oplus [I, R]\in \mathcal{E}_{gr}(\mathcal{L}(R))}$, 
${\{0\}\ne [I\oplus [I, R], q]=[[I, R], q]\oplus [I, q]\subseteq 
\mathcal{L}(R)}$, and $\{0\}\ne [q, I, R]+[q, R, I]\subseteq R$ 
(otherwise ${[[I, R], q]=[[I, q], R]=[I, R]=\{0\}}$ (${[I, q]\subseteq [R, R]}$)?!), 
${(Q_1, [[\ ,\ ],\ ])}$ is a system of quotients of $R$. This refinement of 
\cite{QTS}, Proposition 4.3, (iv) can be rewritten in terms of \cite{QTS} for 
any power filter of ideals of $R$. Based on the above and the conclusion of 
\cite{QTS}, the maximal system of quotients $Q_m(R)$ of a $R$--semiprime $R$, 
into which the systems of quotients of $R$ are embedded identically on $R$, can 
be described, up to an isomorphism identical on $R$, as 
${Q_m(R)=Q_{m-gr}(\mathcal{L}(R))_1}$.  

\begin{prop}
If ${(R, [\ ,\ ,\ ])}$ is a $R$--semiprime triple Lie system, then $P(R)$ 
is a triple Lie system of quotients of $R$, 
${P(R)\hookrightarrow Q_m(R)}$ identically on $R$ and  
$O(P(R))\subseteq Q_m(P(R))=O(Q_m(P(R)))$, $Q_m(P(R))_B$ is a triple Lie 
system of quotients of $O(P(R))_B$ for any ultrafilter $B$ in $B(R)$. 
\end{prop}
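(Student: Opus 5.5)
The plan is to reduce everything to the graded Lie algebra $\mathcal{L}(R)$ and Proposition 3.1, in the same way Proposition 3.6 was reduced to the TKK-algebra. Throughout we use $Q_m(R)=Q_{m-gr}(\mathcal{L}(R))_1$, established just before the statement. The identifications $P_{gr}(\mathcal{L}(R))=\mathcal{L}(P(R))$ and $\CM_{gr}(\mathcal{L}(R))=\CM(R)\Id_{\mathcal{L}(P(R))}$ were obtained before Lemma 3.8. Consequently $B_{gr}(\mathcal{L}(R))$ is identified with $B(R)$, and orthogonal sums over $B(R)$ on $\mathcal{L}(P(R))$ restrict to orthogonal sums on $P(R)=\mathcal{L}(P(R))_1$.

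First, $P(R)$ is a system of quotients of $R$. I would argue directly, as in Proposition 3.5. Take $0\ne x=\sum_{i=1}^k\alpha_i x_i$ with $\alpha_i\in\CM(R)$, $x_i\in R$, and set $I=\bigcap_i{}_{\alpha_i}R\in\mathcal{E}(R)$. By Proposition 2.1, $I\cap(x)_R\ne\{0\}$, and $\CM(R)$-linearity gives $[x,I,R]+[x,R,I]\subseteq R$. $R$-semiprimeness of $R$, applied to the ideal $J=(x)_R\cap I$, gives
\[
\{0\}\ne [J,J,R]+[J,R,J]\subseteq ([x,I,R]+[x,R,I])_R .
\]
By the inclusion $[(X)_R,I,R]+[(X)_R,R,I]\subseteq([X,I,R]+[X,R,I])_R$ proved before the statement, the right-hand side is nonzero exactly when $[x,I,R]+[x,R,I]\ne\{0\}$. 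Hence $P(R)$ is a system of quotients, and by maximality $P(R)\hookrightarrow Q_m(R)$ identically on $R$. Alternatively, one can apply Proposition 3.1 to the $\mathbb{Z}_2$-graded (finitely graded), homogeneously semiprime $\mathcal{L}(R)$: $P_{gr}(\mathcal{L}(R))=\mathcal{L}(P(R))$ is a graded algebra of quotients of $\mathcal{L}(R)$, so its degree-one part $P(R)$ lies in $Q_{m-gr}(\mathcal{L}(R))_1=Q_m(R)$.

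Second, replace $R$ by $P(R)$, which is $R$-semiprime with $\CM(P(R))=\CM(R)$. Applying Proposition 3.1 to $\mathcal{L}(P(R))$ gives
\[
O_{gr}(\mathcal{L}(P(R)))\subseteq Q_{m-gr}(\mathcal{L}(P(R)))=O_{gr}(Q_{m-gr}(\mathcal{L}(P(R)))).
\]
Orthogonal sums are taken degreewise (the finite-grading decomposition before Proposition 3.1), so taking degree-one components yields $O(P(R))=O_{gr}(\mathcal{L}(P(R)))_1\subseteq Q_m(P(R))=O(Q_m(P(R)))$. This requires that completion in $I_{gr}(P_{gr}(\mathcal{L}(R)))$ restricted to degree $1$ agree with completion in $I(P(R))$. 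Since both are determined by the same $B(R)$ and by Remark 2.2, I would identify them through the uniqueness of $\sum^{\perp}$, as in the $O'_{gr}$ argument of Proposition 3.3.

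Third, fix an ultrafilter $B$. Proposition 3.1 says $Q_{m-gr}(\mathcal{L}(P(R)))_B$ is a $\mathbb{Z}_2$-graded algebra of quotients of $O_{gr}(\mathcal{L}(P(R)))_B$. As in the proof of Lemma 3.8, the kernel of $\mathcal{L}(S)\to\mathcal{L}(S_B)$ equals $P\mathcal{L}(S)$ via Proposition 2.4, p.~6. Therefore $O_{gr}(\mathcal{L}(P(R)))_B$ has degree-one part $O(P(R))_B$, and its even part is controlled by $[O(P(R))_B,O(P(R))_B]$ up to the faithful action on degree one. By the converse established before the statement (a $\mathbb{Z}_2$-graded algebra of quotients of $\mathcal{L}(R')$ has degree-one part a system of quotients of $R'$, valid for $R$-semiprime $R'$), the system $Q_m(P(R))_B$ is a triple Lie system of quotients of $O(P(R))_B$.

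The main obstacle is this last step. It requires $O(P(R))_B$ to be $R$-semiprime, and it requires the even part of $O_{gr}(\mathcal{L}(P(R)))_B$ to act faithfully, so that it really equals $\mathcal{L}(O(P(R))_B)$. For $R$-semiprimeness I would argue as at the start of Lemma 2.5: an ideal with $[I,I,S]+[I,S,I]\subseteq PS$ is killed by some $\alpha\in B$ by Proposition 2.4, p.~6. For the faithful action I would rerun the kernel computation from Lemma 3.8.
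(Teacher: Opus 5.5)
Your first two steps agree with the paper. It also shows that $P(R)$ is a system of quotients in two ways: via Proposition 3.1 for $\mathcal{L}(R)$, and directly with $I=\bigcap_i {}_{\alpha_i}R$ and the inclusion $[(X)_R,I,R]+[(X)_R,R,I]\subseteq([X,I,R]+[X,R,I])_R$. It likewise obtains $O(P(R))\subseteq Q_m(P(R))=O(Q_m(P(R)))$ by taking degree-one components in Proposition 3.1 for $\mathcal{L}(P(R))=P_{gr}(\mathcal{L}(R))$.

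The gap is in the last step, and both repairs you propose fail.

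\emph{Faithfulness does not give equality.} Faithfulness of the action of the even part of $O_{gr}(\mathcal{L}(P(R)))_B$ on $O(P(R))_B$ only embeds that even part into the derivations of $O(P(R))_B$. It does not make it equal to $[O(P(R))_B,O(P(R))_B]$. Its elements are images of orthogonal sums ${\sum}^{\perp}\xi_a D_a$ with $D_a\in[P(R),P(R)]$, and the number of brackets needed for $D_a$ can be unbounded in $a$. Such an element need not be a finite sum of brackets of elements of $O(P(R))$, nor of $O(P(R))_B$ after factoring by $P$. Accordingly the paper only asserts $\mathcal{L}(O(P(R)))\subseteq O_{gr}(\mathcal{L}(P(R)))$, just as in Proposition 3.3 it only has $\TKK(V(O(P(R))_T))\subseteq O_{gr}'(\TKK(V(P(R)_T)))$. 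So Proposition 3.1 makes $Q_{m-gr}(\mathcal{L}(P(R)))_B$ an algebra of quotients of a possibly larger algebra than $\mathcal{L}(O(P(R))_B)$. The converse remark before the proposition is formulated for $\mathcal{L}(R')$ and uses $[I,q]\subseteq[R',R']$, so it does not apply as stated.

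\emph{The $R$-semiprimeness argument does not go through.} Proposition 2.4, p.~6 yields a single $\alpha\in B$ only for orthogonally complete subsets of $P\,I(P(R))$. But $[I,I,S]+[I,S,I]$ for an arbitrary ideal $I$ is not orthogonally complete in general; not even $xM(S)'$ is, without a bound on the length of multiplication operators (cf.\ the proof of Lemma 2.6). That is exactly why Lemma 2.5 needs $\nu(B)\in B$ or such a bound, and why Lemmas 2.9 and 3.8 assume non-degeneracy. Under bare $R$-semiprimeness the paper never claims that $O(P(R))_B$ is $R$-semiprime, and the definition of a system of quotients does not need it.

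\emph{The missing idea} is to check the definition directly with one orthogonally complete test ideal. The paper first proves two facts:
\begin{itemize}
\item $Q_m(P(R))$ is a system of quotients of $O(P(R))$;
\item $(I)D\ne\{0\}$ for all $0\ne D\in[Q_m(P(R)),Q_m(P(R))]$ and all $I\in\mathcal{E}(O(P(R)))$.
\end{itemize}
Then, for $x\in Q_m(P(R))\setminus P\,Q_m(P(R))$, it takes the essential graded ideal $H=O_{gr}(H)$ of $O_{gr}(\mathcal{L}(P(R)))$, as in the proof of Proposition 3.1, and tests with its degree-one part $H_1$.

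Suppose $[x,H_1,O(P(R))]+[x,O(P(R)),H_1]\subseteq P\,Q_m(P(R))$, and let $(S,T)$ be $(H_1,O(P(R)))$ or $(O(P(R)),H_1)$.
\begin{enumerate}
\item For each $y\in S$, the set $[x,y,T]=O([x,y,T])$ is killed by some $\alpha_{y,T}\in B$. Hence $\alpha_{y,T}[x,y]=0$, by the second fact above.
\item Then $[x,S]=O_{gr}([x,S])$ is killed by a single $\alpha_T\in B$.
\item For $\alpha=\alpha_{H_1}\alpha_{O(P(R))}$ this gives $[\alpha x,H_1,O(P(R))]+[\alpha x,O(P(R)),H_1]=\{0\}$. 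So $\alpha x=0$ and $x\in P\,Q_m(P(R))$, a contradiction.
\end{enumerate}
The same computation for $x\in O(P(R))$ shows that the image of $H_1$ has zero annihilator in $O(P(R))_B$. Neither primeness of $O(P(R))_B$ nor any identification of even parts is required.
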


\begin{proof}
Since ${P_{gr}(\mathcal{L}(R))=\mathcal{L}(P(R))}$ is a $\mathbb{Z}_2$--graded 
algebra of quotients of $\mathcal{L}(R)$, $P(R)$ is a system of quotients of 
$R$ (see above, Proposition 3.1). This can be deduced similarly to 
Proposition 3.5: if ${0\ne x=\sum\limits_{i=1}^k \alpha_i x_i\in P(R)}$, 
${k\geq 1}$, ${x_i\in R}$, ${\alpha_i\in \CM(R)}$, then 
${I=\bigcap\limits_{i=1}^k {}_{\alpha_i} R\in \mathcal{E}(R)}$, 
\begin{multline*}
\{0\}\ \ne\ 
[(x)_R\cap I, (x)_R\cap I, R]+[(x)_R\cap I, R, (x)_R\cap I]\subseteq
\\ 
[(x)_R, I, R]+[(x)_R, R, I]\subseteq ([x, I, R]+[x, R, I])_R\subseteq R\,.
\end{multline*}
Consequently, ${R\subseteq \End(Q_m(R))_{M(R)'} R\subseteq 
P(R)\subseteq Q_m(R)\subseteq I(R)}$. In view of 
\begin{gather*}
\CM_{gr}(\mathcal{L}(R))\ =\ \CM_{gr}(\mathcal{L}(P(R)))\ =\ \CM(R) 
\Id_{\mathcal{L}(P(R))}\,,
\\
Q_{m-gr}(P_{gr}(\mathcal{L}(R)))\ =\ O_{gr}(Q_{m-gr}(P_{gr}(\mathcal{L}(R))))
\end{gather*}
(Proposition 3.1), up to isomorphism  
\begin{multline*}
O(P(R))\ =\  
O_{gr}(\mathcal{L}(P(R)))_1\ =\ O_{gr}(P_{gr}(\mathcal{L}(R)))_1\subseteq 
Q_m(P(R))\ =
\\ 
Q_{m-gr}(\mathcal{L}(P(R)))_1\ =\ Q_{m-gr}(P_{gr}(\mathcal{L}(R)))_1\ =\ 
O_{gr}(Q_{m-gr}(\mathcal{L}(P(R))))_1\ =\ O(Q_m(P(R)))\,.
\end{multline*}

If ${D={\sum\limits_{a\in I}}^{\perp} \xi_a D_a\in O_{gr}(\mathcal{L}(P(R)))_0}$
for a dense orthogonal ${\{\xi_a\}_{a\in I}\subseteq B(R)}$, 
$\{D_a\}_{a\in I}\subseteq [P(R), P(R)]$ and 
${(O(P(R)))D=[O(P(R)), D]=\{0\}}$, then ${\xi_a D_a=0}$ for all ${a\in I}$ and 
${D=0}$ (${\mathcal{L}(P(R))\subseteq \mathcal{L}(O(P(R)))}$). If 
${(O(P(R)))D\subseteq P O(P(R))}$ for ${D\in O_{gr}(\mathcal{L}(P(R)))_0}$, 
$P=B(R)\setminus B$, then ${(O(P(R)))D=O((O(P(R)))D)}$ and there is 
${\alpha\in B}$, ${\alpha (O(P(R)))D=\{0\}}$, ${\alpha D=0}$, 
${D\in P O_{gr}(\mathcal{L}(P(R)))_0}$. So, 
${\{0\}\ne (I_1)_{\mathcal{L}(O(P(R)))}=I_1\oplus [I_1, O(P(R))]\subseteq I}$ 
for any $\{0\}\ne I=I_1\oplus I_0\lhd_{gr} O_{gr}(\mathcal{L}(P(R)))$. 
If ${I\in \mathcal{E}_{gr}(O_{gr}(\mathcal{L}(P(R))))}$,  
${I_1\in \mathcal{E}(O(P(R)))}$ and 
$(I_1)_{\mathcal{L}(O(P(R)))}\in \mathcal{E}_{gr}(O_{gr}(\mathcal{L}(P(R))))$. 
From this, Proposition 3.1 and the remarks before Proposition 3.9 it follows 
that $Q_m(P(R))$ is a system of quotients of $O(P(R))$. 

If ${x\in Q_m(P(R))\setminus P Q_m(P(R))}$, 
${H=O_{gr}(H)\in \mathcal{E}_{gr}(O_{gr}(\mathcal{L}(P(R))))}$, 
\begin{gather*}
H\ =\ H_1\oplus H_0\ =\ \{z\in O_{gr}(\mathcal{L}(P(R)))\mid 
[z, (x)_{O_{gr}(\mathcal{L}(P(R)))}]\subseteq O_{gr}(\mathcal{L}(P(R)))\}\,,
\\
\Ann_{Q_{m-gr}(\mathcal{L}(P(R)))_B} (H+P Q_{m-gr}(\mathcal{L}(P(R))))/
P Q_{m-gr}(\mathcal{L}(P(R)))\ =\ \{0\}\,,
\\
(H+P O_{gr}(\mathcal{L}(P(R))))/P O_{gr}(\mathcal{L}(P(R)))\in 
\mathcal{E}_{gr}(O_{gr}(\mathcal{L}(P(R)))_B)\,,
\end{gather*}
(see the proof of Proposition 3.1), 
${[x, H_1, O(P(R))]+[x, O(P(R)), H_1]\subseteq P Q_m(P(R))}$, then 
${[x, y, T]=O([x, y, T])\subseteq P Q_m(P(R)), 
\alpha_{y, T} [x, y, T]=\{0\}, \alpha_{y, T} [x, y]=0}$
for any ${y\in S}$ at some ${\alpha_{y, T}\in B}$, 
${[x, S]=O_{gr}([x, S])}$ and there is ${\alpha_T\in B}$,
${\alpha_T [x, S]=\{0\}}$, ${(S, T)=(H_1, O(P(R)))}$, ${(O(P(R)), H_1)}$ 
(Proposition 2.4, p. 6 (${T=O(T)}$, ${S=O(S)}$),  
the remarks before Proposition 3.9: ${(I)D\ne \{0\}}$ 
for all ${0\ne D\in [Q_m(P(R)), Q_m(P(R))], I\in \mathcal{E}(O(P(R)))}$ 
and, in particular, ${I=T}$), 
${[\alpha x, H_1, O(P(R))]+[\alpha x, O(P(R)), H_1]=\{0\}}$ for 
${\alpha=\alpha_{H_1} \alpha_{O(P(R))}\in B, \alpha x=0}$, 
$x\in P Q_m(P(R))$ (the remarks before Proposition 3.9)?! Thus, 
$Q_m(P(R))_B$ is a system of quotients of $O(P(R))_B$.
\end{proof}

Let us present a version of Propositions 3.1, 3.3, 3.5, 3.7, 3.9 that does not 
use the non-degeneracy condition and the constructions (facts of their existence) 
of maximal algebras, systems and pairs of $\mathfrak{M}$--quotients. The 
concepts of a $\Gamma$--graded algebra 
${(Q, \{Q_{\gamma}\}_{\gamma\in \Gamma})}$, a linear\linebreak triple system 
${(Q, \{\ ,\ ,\ \})}$ and a pair ${(Q^{\pm}, \{\ ,\ ,\ \}^{\pm})}$ of 
$\mathfrak{M}$--quotients for a homogeneously semiprime 
$\Gamma$--graded algebra ${(R, \{R_{\gamma}\}_{\gamma\in \Gamma})}$, the  
$R$--semiprime linear triple system ${(R, \{\ ,\ ,\ \})}$ and pair 
${(R^{\pm}, \{\ ,\ ,\ \}^{\pm})}$ are similar to those given above: 
$R$ is a homogeneous subalgebra of $Q$ and for any 
${0\ne q\in Q_{\gamma}, \gamma\in \Gamma}$ there is 
${I\in \mathcal{E}_{gr}(R), \{0\}\ne (q)_R I+I (q)_R\subseteq R}$; $R$ is a 
subsystem of $Q$ and for any ${0\ne q\in Q}$ there exists 
${I\in \mathcal{E}(R)}$, 
\[
\{0\}\ne \{(q)_R, I, R\}+\{(q)_R, R, I\}+\{I, (q)_R, R\}+
\{R, (q)_R, I\}+\{I, R, (q)_R\}+\{R, I, (q)_R\}\subseteq R, 
\]
${(q)_R=q M^Q(R)'}$; ${Q^{-}\oplus Q^{+}}$ is a system of 
$\mathfrak{M}$--quotients of ${\Sigma=R^{-}\oplus R^{+}}$. 

\begin{teor}
If $(R, \{R_{\gamma}\}_{\gamma\in \Gamma})$, $(R, \{\ ,\ ,\ \})$ and 
$(R^{\pm}, \{\ ,\ ,\ \}^{\pm})$ are a homogeneously semi\-prime algebra with 
finite $\Gamma$--grading, a $R$--semiprime linear triple system and a pair, 
then $P_{gr}(R)$, $P(R)$ and $(\CM(\Sigma) R^{\pm})$ are its $\Gamma$--graded 
algebra, system and pair of $\mathfrak{M}$--quotients, respectively.
If $(Q, \{Q_{\gamma}\}_{\gamma\in \Gamma})$, $(Q, \{\ ,\ ,\ \})$ and 
$(Q^{\pm}, \{\ ,\ ,\ \}^{\pm})$ are a $\Gamma$--graded algebra, a system 
and a pair of $\mathfrak{M}$--quotients of $P_{gr}(R)$, $P(R)$ and 
$(\CM(\Sigma) R^{\pm})$, then $O_{gr}(Q)$, $O(Q)$ and $(O(Q^{\pm}))$ are a 
$\Gamma$--graded algebra, a system and a pair of $\mathfrak{M}$--quotients of 
$O_{gr}(P_{gr}(R))$, $O(P(R))$ and $(O(\CM(\Sigma) R^{\pm}))$ and when the 
condition 
\begin{equation}
(x)_{O_{gr}(P_{gr}(R))} I+I (x)_{O_{gr}(P_{gr}(R))}\subseteq 
(x I+I x)_{O_{gr}(P_{gr}(R))}
\end{equation}
for all ${x\in O_{gr}(Q)_{\gamma}}$, ${\gamma\in \Gamma}$, 
${I\lhd_{gr} O_{gr}(P_{gr}(R))}$ is satisfied in $O_{gr}(Q)$,  
the condition  
\begin{equation}
(x)_{O(S)}\cdot (y)_{O(S)}\ =\ \sum_{a\in (x)_{O(S)},\ b\in (y)_{O(S)}} 
a\cdot b\subseteq (x\cdot y)_{O(S)}\quad (x\in O(S'),\ y\in O(S))
\end{equation}
is satisfied in $O(S')$, ${S'=Q, Q^{-}\oplus Q^{+}}$ for ${S=P(R), P(\Sigma)}$, 
where ${(z)_A=z M^C(A)'}$, $z\in C=O_{gr}(R)$, $O(S')$, 
${A=O_{gr}(P_{gr}(R)), O(S)}$, 
\[
a\cdot b\ =\ \{a, b, O(S)\}+\{a, O(S), b\}+\{b, a, O(S)\}+\{O(S), a, b\}+
\{b, O(S), a\}+\{O(S), b, a\}\,,
\]
$O_{gr}(Q)_B$, $O(Q)_B$ and $(O(Q^{\pm}))_B$ are a $\Gamma$--graded algebra, 
a system and a pair of $\mathfrak{M}$--quotients of $O_{gr}(P_{gr}(R))_B$, 
$O(P(R))_B$ and $(O(\CM(\Sigma) R^{\pm}))_B$ for any ultrafilter $B$ in  
$B_{gr}(R)$, $B(R)$ and $B(\Sigma)$. 
\end{teor}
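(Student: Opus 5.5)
The proof has three parts, each modelled on the corresponding step of Propositions 3.1, 3.5 and 3.9, now without non-degeneracy or maximal quotient systems.

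\emph{First part.} For $0\ne x=\sum_{i=1}^k\alpha_i x_i$ in $P_{gr}(R)$ (homogeneous) or in $P(R)$, put $I=\bigcap_i {}_{\alpha_i}R\in\mathcal{E}(R)$ (respectively $\mathcal{E}_{gr}(R)$). As in the proof of Proposition 2.1 we have
\[
(x)_R I+I(x)_R\subseteq\sum_i\alpha_i(I)\subseteq R .
\]
Nonzeroness comes from semiprimeness: $\{0\}\ne((x)_R\cap I)^2$, and for triple systems $R$--semiprimeness applied to $(x)_R\cap I\ne\{0\}$ gives the nonzero six-term sum. The pair case reduces to the system $\Sigma$, using $P(\Sigma)=\CM(\Sigma)R^-\oplus\CM(\Sigma)R^+$.

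\emph{Second part.} Take $Q$ to be a system of $\mathfrak{M}$--quotients of $P(R)$ (and similarly in the graded and pair cases). First, $Q$ is a nonsingular $\CM(R)$--module inside $I(P(R))$. To see this, embed $Q$ identically on $P(R)$ into $I(P(R))$, using essentiality of $P(R)$ in $Q$ as $M^Q(R)'$--module. Then for $0\ne q$ and the corresponding $I$, $\Ann_{\CM(R)}q=\Ann_{\CM(R)}\bigl((q)_RI+\dots\bigr)$, so $O(Q)$ makes sense.

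Now let $x=\sum^{\perp}\xi_a x_a$ with $x_a\in Q$, and choose $I_a\in\mathcal{E}(P(R))$ for $\xi_a x_a$. Set $J=\sum_a\xi_a I_a$. It is essential because its annihilator is killed by every $\xi_a$, hence is zero by Remark 2.2. Then
\[
(x)_{P(R)}J+\dots=\sum_a\xi_a\bigl((x_a)I_a+\dots\bigr)\subseteq P(R).
\]
Passing to orthogonal completions, $O(J)\lhd O(P(R))$ is essential and sends $(x)_{O(P(R))}$ into $O(P(R))$. Nonvanishing holds since $\xi_a x_a\ne0$ for some $a$. This is the argument from the proofs of Propositions 3.1 and 3.9.

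\emph{Third part.} This is the passage to $B$ and is the main obstacle. For $x\in O(Q)\setminus P\,O(Q)$ I would take
\[
H=\{z\in O(P(R))\mid (x)_{O(P(R))}\cdot z\subseteq O(P(R))\}.
\]
It is orthogonally complete and essential, exactly as $H$ in Proposition 3.1. I need $(H+P\,O(P(R)))/P\,O(P(R))$ to be essential in $O(P(R))_B$ and $x$ to act nontrivially modulo $P$.

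Suppose instead that $x\cdot H\subseteq P\,O(Q)$. Condition (1) or (2) is what controls the whole ideal: it gives
\[
(x)_{O(S)}\cdot(H)_{O(S)}\subseteq(x\cdot H)_{O(S)},
\]
and the right side is orthogonally complete (Proposition 2.4, p.~2), hence lies in $P\,O(Q)$. Proposition 2.4, p.~6 then yields $\alpha\in B$ with $\alpha\,(x)\cdot H=\{0\}$, so $(\alpha x)_{O(S)}\cap O(P(R))$ annihilates the essential ideal $H$. Essentiality of $O(P(R))$ in $O(Q)$ then forces $\alpha x=0$, a contradiction.

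Essentiality of $H$ modulo $P$ is handled as in Proposition 3.1. If some ideal $K\not\subseteq P\,O(P(R))$ met $H$ inside $P$, I would pick $\beta\in B(R)$ separating $K$ from $H$ by quasi-injectivity, as at the start of Lemma 2.5's proof, and derive a contradiction. The pair case follows from the system case via $\Sigma$ and $O(\Sigma)=O(R^-)\oplus O(R^+)$.
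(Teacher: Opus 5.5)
Your first two parts follow the paper: the ideal $\bigcap_i{}_{\alpha_i}R$ as in Propositions 3.1, 3.5, 3.9, and $J=\sum_a\xi_aI_a$ with $O(J)$ essential. The third part, which you rightly call the main obstacle, has a genuine gap.

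First, your $H$ is in general not an ideal. For a non-Lie graded algebra or a triple system, $z\in H$ does not give $zw\in H$; in Proposition 3.1 it does only because of the Jacobi identity. So neither ``$H$ is an essential ideal'' nor your final semiprimeness step is available. The paper replaces $H$ by the largest ideal $I$ of $O(S)$ contained in it. This $I$ satisfies $I=O(I)$ and is essential because it contains $O(J)$.

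Second, and decisively, you use (1)/(2) \emph{before} killing. You then claim that the generated submodule $(x\cdot H)_{O(S)}$ is orthogonally complete by Proposition 2.4, p.~2. Proposition 2.4 says nothing of the kind: p.~2 only describes $O(S)$ as the set of orthogonal sums. In the paper, orthogonal completeness of a generated submodule such as $xM(O(P(R)))'$ is proved only under the bounded-length hypothesis of Lemma 2.6. Elements of a generated submodule are finite sums of unbounded length, and an orthogonal sum of them need not be one. So Proposition 2.4, p.~6 cannot be invoked there, and you do not obtain your single $\alpha\in B$.

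Conditions (1) and (2) are meant to be applied \emph{after} killing an orthogonally complete set built from one element.

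In the graded case, $yI+Iy=O_{gr}(yI+Iy)$ since $I=O_{gr}(I)$, so p.~6 gives $\alpha\in B$ with $\alpha yI+I\alpha y=\{0\}$. Then (1) gives $((\alpha y)_A\cap I)^2\subseteq(\alpha yI+I\alpha y)_A=\{0\}$ for $A=O_{gr}(P_{gr}(R))$. Essentiality of $I$ in $A$ and of $A$ in $O_{gr}(Q)$ then yields $\alpha y=0$.

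In the system case, the paper goes element by element:
\begin{itemize}
\item For $z\in I$, the set $y\cdot z=O(y\cdot z)$ gives $\alpha_z\in B$.
\item Condition (2) and $R$--semiprimeness give $(\alpha_zy)_{O(S)}\cap(z)_{O(S)}=\{0\}$.
\item Proposition 2.4, p.~4 gives $\beta_z\in B(R)$ acting as the identity on $(\alpha_zy)_{O(S)}\cap O(S)$ and as zero on $(z)_{O(S)}$.
\item If all $\beta_z$ lay in $B$, then $I\subseteq PO(S)$, so some $\beta\in B$ would kill the essential ideal $I=O(I)$, forcing $\beta=0$.
\item Hence some $\Id_S-\beta_z$ lies in $B$, and $(\Id_S-\beta_z)\alpha_zy=0$.
\end{itemize}
Your single-$\alpha$ route can be repaired differently: apply p.~6 to $\bigcup_{z\in I}y\cdot z$, which is orthogonally complete because $I=O(I)$, rather than to a generated submodule.

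Third, your plan for essentiality of $\bar I=(I+PO(S))/PO(S)$ uses a Lemma 2.5--type separation. That again needs one element of $B$ annihilating $K\cap H$. This requires either orthogonal completeness of an arbitrary ideal $K\supseteq PO(S)$, which need not hold, or the hypotheses of Lemma 2.5 ($\nu(B)\in B$ or bounded length). The paper instead derives both the essentiality of $\bar I$ and the nonvanishing for $x$ from one implication: ``$(y)_{O(S)}\cdot I\subseteq PO(S')$ (resp.\ $yI+Iy\subseteq PO_{gr}(Q)$) implies $y\in PO(S')$''. It applies this to $y$ in an ideal whose image meets $\bar I$ trivially, and to $y=x$.
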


\begin{proof}
The conclusion of the first statement is similar to Propositions 
3.1, 3.5, 3.9, 3.7, taking into account that $P(\Sigma)$ and 
$O(P(\Sigma))$ are systems of pairs $(\CM(\Sigma) R^{\pm})$ and 
$(O(\CM(\Sigma) R^{\pm}))$ (the remarks before Lemma 3.6). 
If ${0\ne x={\sum\limits_{a\in I}}^{\perp} \xi_a x_a\in 
Q'=O_{gr}(Q)_{\gamma}, O(Q), O(Q^{\sigma})}$ for a dense orthogonal 
${\{\xi_a\}_{a\in I}\subseteq B'=B_{gr}(R), B(R), B(\Sigma)}$, 
${\{x_a\}_{a\in I}\subseteq Q''=Q_{\gamma}, Q}$, $Q^{\sigma}$, 
${\gamma\in \Gamma}$, ${\sigma=\pm}$, and 
${I_a\in \mathcal{E}=\mathcal{E}_{gr}(P_{gr}(R)), \mathcal{E}(P(R)), 
\mathcal{E}(P(\Sigma))}$, where in the latter case we can assume 
that ${I_a=I_a^{-}\oplus I_a^{+}}$, 
${(I_a^{\pm})\in \mathcal{E}((\CM(\Sigma) R^{\pm}))}$, 
$I_a^{\pm}$ is the projection of $I_a$ onto $\CM(\Sigma) R^{\pm}$, 
\begin{multline*}
T_a\ =\ 
(\xi_a x_a)_{P_{gr}(R)} I_a+I_a (\xi_a x_a)_{P_{gr}(R)}\subseteq P_{gr}(R)\,,
\\
\shoveleft{
T_a\ =\ \{(\xi_a x_a)_S, I_a, S\}+\{(\xi_a x_a)_S, S, I_a\}+
\{I_a, (\xi_a x_a)_S, S\}+}
\\ 
\{S, (\xi_a x_a)_S, I_a\}+
\{I_a, S, (\xi_a x_a)_S\}+\{S, I_a, (\xi_a x_a)_S\}\subseteq S\,,
\end{multline*}
${S=P(R), P(\Sigma)}$, ${a\in I}$, ${T_a\ne \{0\}}$ for 
${a\in \{a\mid \xi_a x_a\ne 0\}\ne \emptyset}$ (${x\ne 0}$), 
then ${I=\sum\limits_{a\in I} \xi_a I_a\in \mathcal{E}}$, since 
${J\cap I=\{0\}}$ for ${J\lhd P_{gr}(R), S}$ is equivalent to 
${\{0\}=\xi_a (J\cap I_a)=(\xi_a J)\cap I_a=\xi_a J}$ for all ${a\in I}$,
${J=\{0\}}$, and ${T_a=\xi_a T}$, ${a\in I}$,
\begin{multline*}
\{0\}\ \ne\ T\ =\ (x)_{P_{gr}(R)} I+I (x)_{P_{gr}(R)}\subseteq 
\\
\shoveright{
(x)_{O_{gr}(P_{gr}(R))} O_{gr}(I)+O_{gr}(I) (x)_{O_{gr}(P_{gr}(R))}
\subseteq O_{gr}(P_{gr}(R))\,,}
\\
\shoveleft{
\{0\}\ \ne\ T\ =\ \{(x)_S, I, S\}+\{(x)_S, S, I\}+\{I, (x)_S, S\}+ 
\{S, (x)_S, I\}+}
\\ 
\{I, S, (x)_S\}+\{S, I, (x)_S\}\subseteq 
(x)_{O(S)}\cdot O(I)\ =\ \sum_{a\in (x)_{O(S)},\ b\in O(I)} a\cdot b\subseteq O(S)\,.
\end{multline*}

Moreover, ${O_{gr}(I)\in \mathcal{E}_{gr}(O_{gr}(P_{gr}(R)))}$, since if 
${J\lhd O_{gr}(P_{gr}(R)), J\cap O_{gr}(I)=\{0\}}$, then  
${\{0\}=(J\cap P_{gr}(R))\cap I=J\cap P_{gr}(R)=J}$, and, similarly, 
${O(I)\in \mathcal{E}(O(S))}$, and for ${S=P(\Sigma)}$, 
${O(I)=O\Bigl(\sum\limits_{a\in I} \xi_a I_a^{-}\Bigr)\oplus 
O\Bigl(\sum\limits_{a\in I} \xi_a I_a^{+}\Bigr)}$ and 
${\Bigl(O\Bigl(\sum\limits_{a\in I} \xi_a I_a^{\pm}\Bigr)\Bigr)\in 
\mathcal{E}((O(\CM(\Sigma) R^{\pm})))}$. As a result, 
$O_{gr}(Q)$, $O(Q)$ and $(O(Q^{\pm}))$ are a $\Gamma$--graded algebra, 
a system and a pair of $\mathfrak{M}$--quotients of $O_{gr}(P_{gr}(R))$, 
$O(P(R))$ and $(O(\CM(\Sigma) R^{\pm}))$.

If $O_{gr}(P_{gr}(R))$ with (1), ${x\in O_{gr}(Q)_{\gamma}}$, 
${\gamma\in \Gamma}$, $I$ is the largest among ${J\lhd O_{gr}(P_{gr}(R))}$, 
${J\subseteq H}$ (${J\subseteq H'}$, (1)), 
\begin{multline*}
H\ =\ \{z\in O_{gr}(P_{gr}(R))\mid 
(x)_{O_{gr}(P_{gr}(R))} z+z (x)_{O_{gr}(P_{gr}(R))}\subseteq 
O_{gr}(P_{gr}(R))\}\ =\ O_{gr}(H)\subseteq
\\ 
H'\ =\ \{z\in O_{gr}(P_{gr}(R))\mid x z, z x\in O_{gr}(P_{gr}(R))\}\ =\ 
O_{gr}(H')\,,
\end{multline*}
then ${I=O_{gr}(I)\in \mathcal{E}_{gr}(O_{gr}(P_{gr}(R)))}$ (see above), and 
if ${y I+I y=O_{gr}(y I+I y)\subseteq P O_{gr}(Q)}$ for 
${P=B_{gr}(R)\setminus B}$, ${y\in O_{gr}(Q)}$, 
there is ${\alpha\in B}$, ${\alpha y I+I \alpha y=\{0\}}$, 
\begin{multline*}
((\alpha y)_{O_{gr}(P_{gr}(R))}\cap I)^2\subseteq 
(\alpha y)_{O_{gr}(P_{gr}(R))} I+I (\alpha y)_{O_{gr}(P_{gr}(R))}\subseteq
\\ 
(\alpha y I+I \alpha y)_{O_{gr}(P_{gr}(R))}\ =\ \{0\}\,,
\end{multline*}
${\{0\}=(\alpha y)_{O_{gr}(P_{gr}(R))}\cap I=
(\alpha y)_{O_{gr}(P_{gr}(R))}, y\in P O_{gr}(Q)}$ 
(graded version of Proposition 2.4, p. 6, (1), homogeneous 
semiprimeness of $O_{gr}(P_{gr}(R))$ and the fact that $O_{gr}(Q)$ 
is a $\Gamma$--graded algebra of $\mathfrak{M}$--quotients of 
$O_{gr}(P_{gr}(R))$),
\[
(I+P O_{gr}(Q))/P O_{gr}(Q)\cong (I+P O_{gr}(P_{gr}(R)))/
P O_{gr}(P_{gr}(R))\in \mathcal{E}_{gr}(O_{gr}(P_{gr}(R))_B)\,.
\]
Thus, $O_{gr}(Q)_B$ is a $\Gamma$--graded algebra of $\mathfrak{M}$--quotients 
of $O_{gr}(P_{gr}(R))_B$. 

If $O(S')$ with (2), ${S'=Q, Q^{-}\oplus Q^{+}}$ for ${S=P(R), P(\Sigma), 
x\in O(Q), O(Q^{\sigma}), \sigma=\pm, I}$ is the largest among ${J\lhd O(S)}$, 
${J\subseteq K}$ (${J\subseteq K'}$, (2)),
\begin{multline*}
K\ =\ \{z\in O(S)\mid a\cdot z\in O(S)\ \forall a\in (x)_{O(S)}\}\ =\ 
O(K)\subseteq
\\ 
K'\ =\ \{z\in O(S)\mid x\cdot z\in O(S)\}\ =\ O(K')\,,
\end{multline*}
${I=O(I)\in \mathcal{E}(O(S))}$ (see above), ${I=I^{-}\oplus I^{+}, 
(I^{\pm})\in \mathcal{E}((O(\CM(\Sigma) R^{\pm}))), I^{\pm}}$ is 
the projection of $I$ onto $O(\CM(\Sigma) R^{\pm})$ for 
${S=P(\Sigma)}$, in case ${(y)_{O(S)}\cdot I\subseteq P O(S')}$ for 
${P=B(R)\setminus B}$, ${B(\Sigma)\setminus B}$, ${y\in O(S')}$,  
for any ${z\in I}$ there is ${\alpha_z\in B}$, 
\begin{multline*}
y\cdot z\ =\ O(y\cdot z)\subseteq P O(S')\,,\quad 
\alpha_z (y\cdot z)\ =\ (\alpha_z y)\cdot z\ =\ y\cdot (\alpha_z z)\ =\ \{0\}\,,
\\
\shoveleft{
((\alpha_z y)_{O(S)}\cap (z)_{O(S)})\cdot ((\alpha_z y)_{O(S)}\cap (z)_{O(S)})
\ =\ \sum_{a, b\in (\alpha_z y)_{O(S)}\cap (z)_{O(S)}} a\cdot b
\subseteq}
\\ 
(\alpha_z y)_{O(S)}\cdot (z)_{O(S)}\subseteq
((\alpha_z y)\cdot z)_{O(S)}\ =\ 
(\alpha_z y)_{O(S)}\cap (z)_{O(S)}\ =\ \{0\}
\end{multline*}
(Proposition 2.4, p. 6; $R$--semiprimeness of $O(S)$). If there is 
${z\in I}$, ${\alpha_z y=0}$, then ${y\in P O(S')}$. So, one can assume 
that ${\alpha_z y\ne 0}$ and ${(\alpha_z y)_{O(S)}\cap O(S)\ne \{0\}}$ 
for all ${z\in I}$ (see above, $O(S')$\linebreak 
is a system of $\mathfrak{M}$--quotients of $O(S)$). For any 
${z\in I}$ one can choose ${\beta_z\in \CM(S)=\CM(R)}$, $\CM(\Sigma)$, 
${\beta_z u=u}$, ${\beta_z v=0}$ for all 
${u\in (\alpha_z y)_{O(S)}\cap O(S)}$, ${v\in (z)_{O(S)}}$ 
(Proposition 2.4, p. 4). If ${\beta_z\in B}$ for all ${z\in I}$, then 
${I=O(I)\subseteq P O(S)}$, there is ${\beta\in B}$, ${\beta I=\{0\}}$ 
(Proposition 2.4, p. 6), 
${\{0\}=\beta (O(S)\cap I)=(\beta O(S))\cap I=\beta O(S)}$ and ${\beta=0}$?! 
Therefore, there exists ${z\in I}$, ${\beta_z\notin B}$, ${\Id_S-\beta_z\in B}$, 
\[
\{0\}\ =\ (\Id_S-\beta_z) ((\alpha_z y)_{O(S)}\cap O(S))\ =\ 
((\Id_S-\beta_z) \alpha_z y)_{O(S)}\cap O(S)\ =\ 
((\Id_S-\beta_z) \alpha_z y)_{O(S)}\,,
\]
${y\in P O(Q),}$ and, as a consequence, 
${(I+P O(Q))/P O(Q)\cong (I+P O(S))/P O(S)\in \mathcal{E}(O(S)_B)}$ 
and ${((I^{\pm})+P (\CM(\Sigma) R^{\pm}))/P (\CM(\Sigma) R^{\pm})\in 
\mathcal{E}((O(\CM(\Sigma) R^{\pm}))_B)}$ for ${S=P(\Sigma)}$. 
So, $O(Q)_B$ and $(O(Q^{\pm}))_B$ are a system and a pair of 
$\mathfrak{M}$--quotients of $O(P(R))_B$ and $(O(\CM(\Sigma) R^{\pm}))_B$. 
\end{proof}

In conclusion, we consider the implementation of orthogonal completion in the 
maximal right algebra of quotients of an alternative algebra. Following \cite{MLQA}, we call 
an alternative algebra $Q$ a \emph{right algebra of quotients of an alternative 
algebra $R$} if $R$ is a subalgebra of $Q$, ${N(R)\subseteq N(Q)}$ and for 
any ${x, y\in Q}$, ${x\ne 0}$, there is ${z\in N(R)}$, ${x z\ne 0}$, 
${y z\in R}$, where  
\[
N(R)\ =\ \{x\in R\mid (x, R, R)=(R, x, R)=(R, R, x)=\{0\}\}
\] 
is the \emph{associative center of $R$} (in our case 
${N(R)=\{x\in R\mid (x, R, R)=\{0\}\}}$, since the associator of elements of 
the alternative algebra is skew-symmetric in its arguments 
\cite{ZShS}, p. 49). The presence of a right algebra of quotients for 
$R$, whose associator ideal  
\[
D(R)\ =\ ((R, R, R))_R\ =\ (R, R, R) R^1\ =\ R^1 (R, R, R)
\]
has no $2$--torsion or semiprime (${\prr(D(R))=\{0\}}$), is equivalent to the 
fact that $R$ is a right algebra of quotients of $R$, i.e. 
${\Ann_l N(R)=\{0\}}$ \cite{MLQA}, Theorem 2.13, \cite{ZShS}, 
Proposition 8, p. 203, where ${R^1=F\oplus R}$ is the algebra obtained from 
$R$ by standard addition of 1. The semiprimeness (primeness) of $R$ is 
inherited by its ideals \cite{ZShS}, Theorem 4, Corollary 1, p. 214. In 
the semiprime $R$, 
\begin{gather*}
N(I)\ =\ I\cap N(R)\,,\quad Z(I)\ =\ I\cap Z(R)\,,
\\
ZN(R)\ =\ ([N(R), R])_R\ =\ [N(R), R] R^1\ =\ R^1 [N(R), R]\subseteq U(R)
\end{gather*}
for any one-sided ideal ${I\subseteq R}$, where $U(R)$ is the 
greatest of ${J\lhd R}$, ${J\subseteq N(R)}$ (associative core of $R$) 
\cite{ZShS}, Theorems 2, 3, p. 211, 212, Lemma 4, p. 168, Theorem 11, p. 205. 

\begin{rem}
If an alternative algebra $R$ is non-degenerate, then ${\Ann_t N(R)=\{0\}}$, 
${t=l, r}$.  
\end{rem}

\begin{proof} 
Since for any ${x\in \Ann_l N(R)}$
\begin{gather*}
x (R N(R))\subseteq x (N(R) R-[N(R), R])\subseteq x (N(R) R+N(R))\
\subseteq (x N(R)) R^1\ =\ \{0\}\,,
\\
(x R) N(R)\ =\ x (R N(R))\ =\ \{0\}\,,\quad 
(R x) N(R)\ =\ R (x N(R))\ =\ \{0\}\,,
\end{gather*}
${\Ann_l N(R)\lhd R}$. From \cite{ZShS}, Lemma 7, Theorem 7, p. 224, 225 and 
the non-degeneracy of $\Ann_l N(R)$ follows either 
\[
\Ann_l N(R)\ = N(\Ann_l N(R))\subseteq U(R)\subseteq N(R)\,,\quad 
\{0\}\ =\ (\Ann_l N(R))^2\ =\ \Ann_l N(R)
\]
or
\[
Z(\Ann_l N(R))\ =\ \Ann_l N(R)\cap Z(R)\ne \{0\}\ =\ Z(\Ann_l N(R))^2\ =\ 
Z(\Ann_l N(R))\,?!
\]
The conclusion ${\Ann_r N(R)=\{0\}}$ is similar.
\end{proof}

Therefore, due to the inheritance of non-degeneracy of $R$ by its ideals 
\cite{ZShS}, Lemma 7, p. 224, the non-degenerate $R$ has left and 
right algebras of quotients. 

\begin{rem}
If $Q$ is a right algebra of quotients of an alternative algebra $R$, then 
\begin{gather*}
N(Q)\ =\ \{x\in Q\mid (x, R, R)=\{0\}\}\,,\quad 
K(Q)\ =\ \{x\in Q\mid [x, R]=\{0\}\}\,,\\
Z(Q)\ =\ \{x\in Q\mid (x, R, R)=[x, R]=\{0\}\}\,,
\end{gather*}
where ${K(Q)=\{x\in Q\mid [x, Q]=\{0\}\}}$ is the commutative center of $Q$.
\end{rem}

\begin{proof}
Since ${N(R)\subseteq N(Q)}$, for any ${x\in Q}$, 
${(x, R, R)=\{0\}}$, and ${y, z\in Q}$, ${(x, y, z)\ne 0}$, 
\[
0\ \ne\ ((x, y, z) a) b\ =\ (x, y, z) (a b)\ =\ (x, y a, z b)\ =\ 0
\] 
for some ${a, b\in N(R)}$ \cite{ZShS}, Lemma 1, p. 164, 
${y a, z b\in R}$?! If ${x\in Q}$, ${[x, R]=\{0\}}$, and 
${y\in Q}$, ${[x, y]\ne 0}$, then there is ${a\in N(R)}$, ${y a\in R}$, 
\[
0\ne [x, y] a\ =\ (x y) a-(y x) a\ =\ x (y a)-y (x a)\ =\ 
x (y a)-y (a x)\ =\ x (y a)-(y a) x\ =\ 0\,?! 
\]
It remains to be noted that ${Z(Q)=N(Q)\cap K(Q)}$.
\end{proof}

A right ideal ${I\subseteq R}$ is called \emph{dense} if for any 
${x, y\in R}$, ${x\ne 0}$, there is ${z\in N(R)}$, ${x z\ne 0}$, 
${y z\in I}$. The density of $I$ is equivalent to the fact that $R$ is a 
right algebra of quotients of $I$ (we can assume that ${z\in N(I)=I\cap N(R)}$) 
\cite{MLQA}, Lemma 2.3, and implies 
\[
N(I)\ \ne\  \{0\}\ =\ \Ann_l N(I)\ =\ \Ann_l I\,. 
\]
Denote by $\mathcal{D}_r(R)$ the set of dense right ideals of $R$, by 
$\mathcal{D}_{nr}(R)$ the set of right ideals of $J$ in $N(R)$ such that for 
any ${0\ne x\in R}$, ${y\in N(R)}$, there is ${z\in N(R)}$ 
(equivalent to ${z\in J}$), ${x z\ne 0}$, ${y z\in J}$, put  
\[
\mathcal{D}_r^*(R)\ =\ \{N(J) R^1\mid J\in \mathcal{D}_r(R)\}\ =\ 
\{J R^1\mid J\in \mathcal{D}_{nr}(R)\}\subseteq \mathcal{D}_r(R)
\]
\cite{MLQA}, Proposition 2.6, Remark 2.7 and for 
${I\in \mathcal{D}_r^*(R)}$ 
\begin{multline*} 
\Hom^*(I, R)_{N(R)}\ =
\\ 
\Bigl\{\phi\in \Hom(I, R)_{N(R)}\Bigl| 
\phi(y x)=(\phi y) x,\ \phi[y, z]\in N(R)\ \forall x\in R,\ 
y, z\in N(I)\Bigr\}\,.
\end{multline*}
The condition ${\Ann_l N(R)=\{0\}}$ is equivalent to any of the following 
conditions: ${\mathcal{D}_{nr}(R)\ne \emptyset}$; 
${N(R)\in \mathcal{D}_{nr}(R)}$; ${\mathcal{D}_r(R)\ne \emptyset}$; 
${R\in \mathcal{D}_r(R)}$; ${\mathcal{D}_r^*(R)\ne \emptyset}$; 
${N(R) R^1\in \mathcal{D}_r^*(R)}$. 
When this holds for all ${x\in R^1}$, ${I\in \mathcal{D}_r(R)}$,
${l_x|_{N(I) R^1}\in \Hom^*(N(I) R^1, R)_{N(R)}}$, since 
\[
[N(R), R]\,,\ R^1 [N(R), N(R)]\subseteq N(R)\,,\quad 
N(I)\ =\ I\cap N(R)
\] 
\cite{ZShS}, Lemma 7, p. 164, \cite{MLQA}, Lemma 2.3. If $Q$ is a 
right algebra of quotients of $R$, then 
${x^{-1} N(R)=\{y\in N(R)\mid x y\in R\}\in \mathcal{D}_{nr}(R)}$ for all 
${x\in Q}$ \cite{MLQA}, Lemma 2.12. In view of 
$\{0\}\ne I\cap R\lhd R$ for all ${\{0\}\ne I\lhd Q}$, $Q$ inherits the 
semiprimeness (primeness) $R$, the semisimpli-\linebreak city of $R$ with respect to 
any radical hereditary on subalgebras, and, in particular, its 
non-degeneracy. 

For an alternative $R$ with ${\Ann_l N(R)=\{0\}, \prr(D(R))=\{0\}}$ or 
$D(R)$ without 2--torsion, the complete (maximal) right algebra of quotients 
$Q(R)$, into which all right algebras of quotients of $R$ are embedded 
identically on $R$, is uniquely determined up to an isomorphism identical 
on $R$, and can be constructed as a quotient set $\mathcal{C}_{dr}^*(R)/\sim$ 
of the set 
\[
\mathcal{C}_{dr}^*(R)\ =\ \{(\phi, I)\mid 
\phi\in \Hom^*(I, R)_{N(R)},\ I\in \mathcal{D}_r^*(R)\}
\] 
under the equivalence relation $\sim$: ${(\phi, I)\sim (\phi', I')}$ if 
${\phi=\phi'}$ on ${I\cap I'}$ (equivalent on some 
${I''\in \mathcal{D}_r^*(R)}$, ${I''\subseteq I\cap I'}$, since for 
${\phi''=\phi-\phi'}$, ${x\in I\cap I'}$, ${\phi'' x\notin \phi''(I'')=\{0\}}$, 
there exists ${y\in N(R)}$, ${x y\in I''}$, 
${0\ne (\phi'' x) y=\phi'' (x y)=0}$?!), with the operations 
\begin{gather*}  
[(\phi, I R^1)]+[(\psi, J R^1)]\ =\ [(\phi+\psi, (I\cap J) R^1)]\,,
\\ 
[(\phi, I R^1)] [(\psi, J R^1)]\ =\ 
[(\phi \psi, (\psi^{-1}(I R^1)\cap J) R^1)]\,,
\\
f [(\phi, I R^1)]\ =\ [(f \Id_R, N(R) R^1)] [(\phi, I R^1)]\ =\ 
[(\phi, I R^1)] [(f \Id_R, N(R) R^1)]\ =\ [(f \phi, I R^1)]\,,
\end{gather*}
${(\phi, I R^1), (\psi, J R^1)\in \mathcal{C}_{dr}^*(R)}$, 
${I, J\in \mathcal{D}_{nr}(R)}$, ${f\in F}$, where  
\begin{gather*}
\phi \psi\Bigl(\sum_i x_i y_i\Bigr)\ =\ 
\sum_i (\phi \psi x_i) y_i\quad 
(x_i\in \psi^{-1}(I R^1)\cap J=\{x\in J\mid \psi x\in I R^1\},\ y_i\in R^1)\,,
\\
[(\phi, I R^1)] [(l_x, N(R) R^1)]\ =\ [(l_{\phi x}, I R^1)]\quad (x\in I R^1)\,,
\end{gather*}
unity ${[(\Id_R, N(R) R^1)]}$, ${R\hookrightarrow Q(R)}$, 
${x\longmapsto [(l_x, N(R) R^1)]}$, ${x\in R}$ \cite{MLQA},  
Theorem 2.11, p. 2.10 (${(I R^1) (N(R) R^1)\subseteq (I ([R^1, N(R)]+N(R) R^1)) R^1
\subseteq I R^1+(I R^1) R^1=I R^1}$). As in the associative case, $Q(R)$ can be 
defined by the axioms: 
\begin{enumerate}

\item $R$ is a subalgebra of $Q(R)$; 
                   
\item for any ${x\in Q(R)}$ there is ${I\in \mathcal{D}_{nr}(R)}$, 
${x I\subseteq R}$; 

\item if ${x\in Q(R)}$, ${I\in \mathcal{D}_{nr}(R)}$, ${x I=\{0\}}$, 
then ${x=0}$; 

\item for any ${I\in \mathcal{D}_{nr}(R)}$, ${\phi\in \Hom^*(I R^1, R)_{N(R)}}$ 
there is ${q\in Q(R)}$, ${l_q|_{I R^1}=\phi}$.

\end{enumerate}

\begin{lemma}
If an alternative algebra $R$ is semiprime and ${\Ann_l N(P(R))=\{0\}}$, then 
${Q(P(R))=O(Q(P(R)))}$ is a right algebra of quotients of $O(P(R))$, 
$Q(P(R))_B$ is a right algebra of quotients of $O(P(R))_B$ for any 
ultrafilter $B$ in $B(R)$.
\end{lemma}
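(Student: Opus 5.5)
We follow the scheme of the associative case at the start of Section 3 and of Proposition 3.1. \textbf{Step 1: placement of $Q(P(R))$ inside $I(P(R))$.} Since $\Ann_l N(P(R))=\{0\}$, the algebra $Q(P(R))$ exists, and $P(R)$ is an essential $M(P(R))'$--submodule of it: a nonzero $q$ has $q z\in P(R)\setminus\{0\}$ for some $z\in N(P(R))$. Hence we may regard $P(R)\subseteq Q(P(R))\subseteq I(P(R))$, with $\CM(R)$ acting on $Q(P(R))$ as the restriction of $\End(I(P(R)))_{M(P(R))'}$; this uses the axioms 1--4 of $Q(R)$ and the fact that, by Remark 3.12, $\CM(R)$ commutes with left multiplications. \textbf{Step 2: $Q(P(R))=O(Q(P(R)))$.} Take a dense orthogonal $\{\xi_a\}_{a\in I}\subseteq B(R)$ and $\{x_a\}\subseteq Q(P(R))$, and choose $J_a\in\mathcal{D}_{nr}(P(R))$ with $x_aJ_a\subseteq P(R)$. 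Put $J=\sum_a\xi_a J_a$. This $J$ lies in $N(P(R))$ (because $\xi_a N\subseteq N$) and is dense: for $0\ne x$, some $\xi_a x\ne 0$, and density of $J_a$ then gives $z$. Define $\phi\colon \sum_i \xi_{a_i}y_{a_i}\longmapsto \sum_i \xi_{a_i}x_{a_i}y_{a_i}$ on $J$ and extend it to $JP(R)^1$ by $\phi(yr)=(\phi y)r$. Well-definedness, and the conditions defining $\Hom^*$, are checked componentwise after multiplying by $\xi_a$, using Remark 2.2. Axiom 4 then yields $q$ with $l_q=\phi$, and axiom 3 gives $\xi_a q=\xi_a x_a$. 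So $O(P(R))\subseteq O(Q(P(R)))=Q(P(R))$.

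\textbf{Step 3: $Q(P(R))$ is a right algebra of quotients of $O(P(R))$.} By Step 2, $O(P(R))$ is a subalgebra. The inclusion $N(O(P(R)))\subseteq N(Q(P(R)))$ holds because an element $\sum^{\perp}\xi_a n_a$ with $n_a\in N(O(P(R)))$ satisfies $(x,R,R)=\{0\}$ and hence, by Remark 3.12, lies in $N(Q)$. The density condition is inherited from $P(R)\subseteq O(P(R))$, using $N(P(R))\subseteq N(O(P(R)))$.

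\textbf{Step 4: passing to the ultrafilter quotient $Q(P(R))_B$ (main obstacle).} Let $P=B(R)\setminus B$. We need three things.

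First, $O(P(R))_B\subseteq Q(P(R))_B$ with $P Q\cap O(P(R))=P O(P(R))$; this follows from $S\cap PS$ for $S=O(S)$.

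Second, $N(O(P(R))_B)\subseteq N(Q(P(R))_B)$. If $(x,O,O)\subseteq PO$, then the set of these associators is orthogonally complete, so Proposition 2.4, p.~6 gives $\alpha\in B$ with $(\alpha x,O,O)=\{0\}$. Hence $\alpha x\in N(Q)$ by Remark 3.12, and so $x+PQ$ is nuclear in $Q_B$.

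Third, density. For $x\notin PQ$ and arbitrary $y$, we need $z\in N(O(P(R)))$ with $xz\notin PQ$ and $yz\in O(P(R))$. Consider $H=\{z\in N(O(P(R)))\mid yz\in O(P(R))\}=O(H)$, which is a dense right ideal of $N$. Suppose $xH\subseteq PQ$. Then $xH=O(xH)$, so some $\alpha\in B$ gives $\alpha xH=\{0\}$; density of $H$ and axiom~3 then force $\alpha x=0$, that is, $x\in PQ$, a contradiction. The delicate point is justifying $xH=O(xH)$ together with Proposition 2.4, p.~6 in $I(P(R))$. This needs $H$ to be closed under $\sum^{\perp}$, which follows from $Q=O(Q)$ and the componentwise nature of multiplication; we would argue this as in the proof of Proposition 3.1.
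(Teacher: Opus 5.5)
Your proposal is correct and follows the paper's proof. You get $Q(P(R))=O(Q(P(R)))$ exactly as the paper does, by gluing the maps $l_{x_a}$ over $J=\sum_a\xi_a J_a\in\mathcal{D}_{nr}(P(R))$ and applying axioms 3--4, and you get density in $Q(P(R))_B$ the same way, from $xH=O(xH)\subseteq P\,Q(P(R))$ with $H=y^{-1}N(O(P(R)))$ and Proposition 2.4, p.~6. Where you differ, you only make that argument more complete: you check density of $J$ directly, you prove the intermediate-algebra step instead of citing Proposition 1.12 of MLQA, and you explicitly verify $N(O(P(R))_B)\subseteq N(Q(P(R))_B)$, which the paper leaves implicit.
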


\begin{proof}
Note that both conditions are satisfied on $R$ for non-degenerate $R$ (and 
so, $P(R)$). For any dense orthogonal ${\{\xi_a\}_{a\in I}\subseteq B(R)}$, 
${\{x_a\}_{a\in I}\subseteq Q(P(R))}$ there 
exist ${J_a\in \mathcal{D}_{nr}(P(R))}$, ${x_a J_a\subseteq P(R)}$, 
${a\in I}$. If ${J=\sum\limits_{a\in I} \xi_a J_a}$ and ${x\in P(R)}$, 
${x J=\{0\}}$, then ${\xi_a x J_a=\{0\}}$, ${\xi_a x=0}$ for 
all ${a\in I}$, ${x=0}$, ${\Ann_l J=\{0\}}$. If ${y\in N(P(R))}$, then 
\[
J'_a\ =\ y^{-1} N(P(R))\cap J_a\ =\ \{z\in J_a\mid y z\in N(P(R))\}\in 
\mathcal{D}_{nr}(P(R))\quad (a\in I)
\] 
(see \cite{MLQA}, p. 2.10 (4)), ${\Ann_l J'=\{0\}}$, 
${J'=\sum\limits_{a\in I} \xi_a J'_a\subseteq J}$ 
and therefore ${J\in \mathcal{D}_{nr}(P(A))}$. Since 
${\psi\in \Hom^*(J P(R)^1, P(R))_{N(P(R))}}$, 
\[
\psi\,:\ \sum_i \xi_{a_i} y_{a_i} z_{a_i}\longmapsto 
\sum_i \xi_{a_i} (x_{a_i} y_{a_i}) z_{a_i}\ =\ 
\sum_i \xi_{a_i} x_{a_i} (y_{a_i} z_{a_i})\quad (y_a\in J_a,\ z_a\in P(R)^1)\,,
\]
there is ${q\in Q(P(R))}$, ${l_q|_{J P(R)^1}=\psi}$, 
${\xi_a (q-x_a) J_a P(R)^1=\{0\}}$, ${\xi_a q=\xi_a x_a}$ for all ${a\in I}$. 
Consequently, ${Q(P(R))=O(Q(P(R)))}$ is a right algebra of quotients of 
$P(R)$ and $O(P(R))$, $O(P(R))$ is a right algebra of quotients of $P(R)$ 
\cite{MLQA}, Proposition 1.12. 
 
If ${P=B(R)\setminus B}$, ${x, y\in Q(P(R))}$, ${x\notin P Q(P(R))}$ and 
${x I=O(x I)\subseteq P Q(P(R))}$, where  
${I=y^{-1} N(O(P(R)))=O(I)\in \mathcal{D}_{nr}(O(P(R)))}$ (see above using 
${O(P(R))=O(O(P(R)))}$ and ${N(O(P(R)))=O(N(O(P(R))))}$), then 
there is ${\alpha\in B}$, ${\alpha x I=\{0\}}$, ${\alpha x=0}$, 
${x\in P Q(P(R))}$ (Proposition 2.4, p. 6). Hence $Q(P(R))_B$ is a right 
algebra of quotients of $O(P(R))_B$. Due to the strong primeness of 
$O(P(R))_B$, $Q(P(R))_B$ is strongly prime (Lemma 2.8 and the remarks above).
\end{proof}

In \cite{MLQA}, the maximal right (left) algebra of quotients is actually 
constructed for all algebras $R$ with an associator of elements that is 
skew-symmetric in arguments and ${\Ann_l N(R)=\{0\}}$ (${\Ann_r N(R)=\{0\}}$), 
and therefore Lemma 3.13 also remains true for such semiprime $R$.

\end{document}